\numberwithin{equation}{section}
\def\etal{\emph{et al.\/}}
\def\@Rref#1{\hbox{\rm \ref{#1}}}
\def\Rref#1{\@Rref{#1}}
\theoremstyle{plain}
\newtheorem{thrm}{Theorem}[section]
\newtheorem{lmm}[thrm]{Lemma}
\newtheorem{prpstn}[thrm]{Proposition}
\theoremstyle{definition}
\newtheorem{rmrk}[thrm]{Remark}
\theoremstyle{plain}
\newenvironment{acknowledgement}{\par\addvspace{17pt}\small\rmfamily\noindent}{\par\addvspace{6pt}}
\theoremstyle{definition}
\newtheorem{step}{Step}
\def\xCn#1{{\rm C}^#1}
\def\xLinfty{{\rm L}^{\infty}} 
\def\xLn#1{{\rm L}^#1}
\def\xdrv#1#2{\frac{{\rm d}#1}{{\rm d}#2}}%  "d#1 over d#2"
\begin{document}
	\title[Controllability of ideal incompressible magnetohydrodynamic duct flows]{Global exact controllability of ideal incompressible magnetohydrodynamic flows through a planar duct}
	
	\author[Manuel Rissel]{Manuel Rissel$^{1}$}\thanks{$^{1}$School of Mathematical Sciences, Shanghai Jiao Tong University, Shanghai, 200240, P. R. China;  manuel.rissel@sjtu.edu.cn}
	\author[Ya-Guang Wang]{Ya-Guang Wang$^2$}\thanks{$^2$School of Mathematical Sciences, Center for Applied Mathematics, MOE-LSC and SHL-MAC, Shanghai Jiao Tong University, Shanghai, 200240, P. R. China; ygwang@sjtu.edu.cn}

	\begin{abstract}
		This article is concerned with the global exact controllability for ideal incompressible magnetohydrodynamics in a rectangular domain where the controls are situated in both vertical walls. First, global exact controllability via boundary controls is established for a related Els\"asser type system by applying the return method, introduced in [Coron J.M., Math. Control Signals Systems, 5(3) (1992) 295--312]. Similar results are then inferred for the original magnetohydrodynamics system with the help of a special pressure-like corrector in the induction equation. Overall, the main difficulties stem from the nonlinear coupling between the fluid velocity and the magnetic field in combination with the aim of exactly controlling the system. In order to overcome some of the obstacles, we introduce ad-hoc constructions, such as suitable initial data extensions outside of the physical part of the domain and a certain weighted space.
		\newline\newline
		\smallskip
		\noindent \textbf{2020 Mathematics Subject Classification.} 93B05, 76C99, 93C20
		
		\smallskip
		\noindent \textbf{Keywords.} Ideal incompressible magnetohydrodynamics; boundary control; global exact controllability
	\end{abstract}
	\maketitle

	\section*{Introduction}
	The property of global exact controllability by means of boundary controls has been proved for perfect fluids, which are described by the incompressible Euler equations, already more than two decades ago for various kinds of bounded domains under employment of the return method introduced by Coron in \cite{returnmethod_Origin_Coron}. For instance in \cite{BoundaryControl_PerfectFluid_2D_Coron_1,BoundaryControl_PerfectFluid_2D_Coron_2} Coron obtains controllability for two dimensional simply- and multi-connected domains while the situation of three dimensional multi-connected domains is treated in \cite{BoundaryControlEuler3D_Glass} by Glass. However, in the context of ideal incompressible magnetohydrodynamics, which contains the incompressible Euler system coupled in a nonlinear way with the induction equation for the evolution of the magnetic field, similar results on controllability are not available up to the present moment. The goal of this work is to make a first step in this direction for the very particular case of a rectangular domain.
	
	Let $\Omega \subseteq \mathbb{R}^2$ be the bounded rectangular domain $\Omega := (0, L) \times (0, W)$ with boundary $\Gamma := \partial \Omega$ and outward unit normal denoted by $\mathbf{n} = (n_1, n_2)'$ as well as tangential field $\boldsymbol{\tau} := (n_2, -n_1)'$. The length $L > 0$ and width $W > 0$ are hereby fixed and the controlled part of the boundary $\Gamma_0 \subseteq \Gamma$ consists of the two vertical walls:
	\[
	\Gamma_0 := \Big( \{0\}\times(0,W) \Big) \cup \Big( \{L\}\times(0,W) \Big).
	\]

	We consider in $\Omega$ the following equations for an ideal incompressible magnetohydrodynamic flow with velocity $\mathbf{u} = (u_1, u_2)' \in \mathbb{R}^2$ and total pressure $p \in \mathbb{R}$, in the presence of a magnetic field $\mathbf{H} = (H_1, H_2)' \in \mathbb{R}^2$:
	\begin{equation}\label{equation:MHD00}
		\begin{cases}
			\partial_t \mathbf{u} + (\mathbf{u} \cdot \boldsymbol{\nabla}) \mathbf{u} - \mu(\mathbf{H} \cdot \boldsymbol{\nabla})\mathbf{H} + \boldsymbol{\nabla} p = \mathbf{0}, & \mbox{ in } \Omega \times (0, T),\\
			\partial_t \mathbf{H} + (\mathbf{u} \cdot \boldsymbol{\nabla}) \mathbf{H} - (\mathbf{H} \cdot \boldsymbol{\nabla}) \mathbf{u} = \mathbf{0}, & \mbox{ in } \Omega \times (0, T),\\
			\operatorname{div}(\mathbf{u}) = \operatorname{div}(\mathbf{H}) = 0,  & \mbox{ in } \Omega \times (0, T),\\
			\mathbf{u} \cdot \mathbf{n} = \mathbf{H} \cdot \mathbf{n} = 0,  & \mbox{ on } (\Gamma\setminus\Gamma_0) \times (0, T),\\
			\mathbf{u}(\cdot, 0)  =  \mathbf{u}_0,\, \mathbf{H}(\cdot, 0)  =  \mathbf{H}_0  & \mbox{ in } \Omega.
		\end{cases}
	\end{equation}
	Above, the vector fields $\mathbf{u}_0$ and $\mathbf{H}_0$ are the prescribed initial data at time $t = 0$ and $\mu > 0$ stands for the magnetic permeability. Moreover, the set $\Gamma_0 \subseteq \Gamma$, as specified before, represents the controlled part of the boundary through which trajectories may be influenced by applying boundary controls, and the objective is then to reach a prescribed final state $(\mathbf{u}_T, \mathbf{H}_T)$ within any small but fixed time $T > 0$, that is
	\begin{equation}\label{equation:endcondition_intro00}
		\mathbf{u}(\cdot, T)  =  \mathbf{u}_T,\, \mathbf{H}(\cdot, T)  =  \mathbf{H}_T  \mbox{ in } \Omega.
	\end{equation}
	As further discussed below in Remark~\Rref{remark:bc}, the boundary controls are obtained in an implicit way and do not explicitly appear in the system \eqref{equation:MHD00}.
	
	By employing characteristic variables, sometimes called the Els\"asser variables, defined as $\mathbf{z}^+ := \mathbf{u} + \sqrt{\mu}\mathbf{H}$ and $\mathbf{z}^- := \mathbf{u} - \sqrt{\mu}\mathbf{H}$, the above control problem \eqref{equation:MHD00} - \eqref{equation:endcondition_intro00} can be transformed into the following system:
	\begin{equation}\label{equation:MHD_Elsaesser}
		\begin{cases}
			\partial_t \mathbf{z}^+ + (\mathbf{z}^- \cdot \boldsymbol{\nabla}) \mathbf{z}^+ + \boldsymbol{\nabla} p^+ = \mathbf{0} , & \mbox{ in } \Omega \times (0, T),\\
			\partial_t \mathbf{z}^- + (\mathbf{z}^+ \cdot \boldsymbol{\nabla}) \mathbf{z}^- +\boldsymbol{\nabla} p^- = \mathbf{0}, & \mbox{ in } \Omega \times (0, T),\\
			\operatorname{div}(\mathbf{z}^+) = \operatorname{div}(\mathbf{z}^-) = 0,  & \mbox{ in } \Omega \times (0, T),\\
			\mathbf{z}^+ \cdot \mathbf{n} = \mathbf{z}^- \cdot \mathbf{n} = 0,  & \mbox{ on } (\Gamma\setminus\Gamma_0) \times (0, T),\\
			\mathbf{z}^{\pm}(\cdot, 0) = \mathbf{z}_0^{\pm} :=  \mathbf{u}_0 \pm \sqrt{\mu} \mathbf{H}_0, & \mbox{ in } \Omega,\\
			\mathbf{z}^{\pm}(\cdot, T) = \mathbf{z}_T^{\pm} :=   \mathbf{u}_T \pm \sqrt{\mu} \mathbf{H}_T, & \mbox{ in } \Omega.
		\end{cases}
	\end{equation}
	Hereby, \eqref{equation:MHD_Elsaesser} is equivalent to \eqref{equation:MHD00} - \eqref{equation:endcondition_intro00} as long as $\boldsymbol{\nabla} p^+ = \boldsymbol{\nabla} p^-$, which however would impose requirements on the controlled solutions along $\Gamma_0$. Indeed, by taking the divergence in the equation for $\mathbf{H}$ and also by multiplying with $\mathbf{n}$ along $\Gamma$, one can obtain for the difference $q := (2\sqrt{\mu})^{-1} (p^+-p^-)$ that
	\begin{equation}\label{equation:qq}
		\begin{cases}
			\Delta q = 0, & \mbox{ in } \Omega \times (0,T),\\
			\partial_{\mathbf{n}} q =  -\operatorname{sign}(n_1)\Big(\partial_t H_1 + (u_1 \partial_1 + u_2 \partial_2) H_1 - (H_1 \partial_1 + H_2 \partial_2) u_1\Big) , & \mbox{ on } \Gamma_0 \times (0,T),\\
			\partial_{\mathbf{n}} q = 0, & \mbox{ on } (\Gamma\setminus\Gamma_0) \times(0,T).
		\end{cases}
	\end{equation}
	
	In this article we shall prove the small-time global exact boundary controllability for the Els\"asser system \eqref{equation:MHD_Elsaesser} whereby the obtained solutions may have the property $\boldsymbol{\nabla} p^+ \neq \boldsymbol{\nabla} p^-$. We are not able to verify for our controlled solutions that $\boldsymbol{\nabla} p^+ = \boldsymbol{\nabla} p^-$ holds, and for the sake of drawing conclusions regarding the original system \eqref{equation:MHD00} as well, our remedy is to allow for a pressure like corrector $\boldsymbol{\nabla}q$ in the induction equation, see Theorem~\Rref{theorem:main1} below. We leave it as an open problem whether in our controllability result the harmonic contribution $q$ actually satisfies $\boldsymbol{\nabla}q \equiv 0$ or not. One possible approach for showing $\boldsymbol{\nabla}q \equiv 0$, which is under ongoing investigation, would be to establish suitable well-posedness results for \eqref{equation:MHD00} with non-characteristic boundary conditions at $\Gamma_0$.
	
	\section{Main results and outline of the work}
	For $m \in \mathbb{N}_0 := \mathbb{N}\cup\{0\}$, $n \in \mathbb{N}$, $\alpha \in (0,1)$, the H\"older spaces of $\mathbb{R}^n$-valued functions $\mathbf{f} = (f_1,\dots,f_n)'$, in the case $n = 1$ written as $f$, on a bounded domain $O \subseteq \mathbb{R}^k$ are denoted by $\left(\xCn{{m,\alpha}}(\overline{O}; \mathbb{R}^n), \|\cdot\|_{m, \alpha, O}\right)$, where
	\[
	\|\mathbf{f}\|_{m, \alpha, O} := \sum\limits_{0\leq|\boldsymbol{\boldsymbol{\beta}}|\leq m} \sup\limits_{\mathbf{x} \in O}|\partial^{\boldsymbol{\beta}}\mathbf{f}(\mathbf{x})| + \sum\limits_{|\boldsymbol{\beta}|= m} \sup\limits_{\substack{\mathbf{x},\mathbf{y} \in O \\ \mathbf{x}\neq \mathbf{y}}}\frac{|\partial^{\boldsymbol{\beta}}\mathbf{f}(\mathbf{x})-\partial^{\boldsymbol{\beta}}\mathbf{f}(\mathbf{y})|}{|\mathbf{x}-\mathbf{y}|^{\alpha}}.
	\]
	Above, $|\cdot|$ is the Euclidean norm in $\mathbb{R}^n$ and if $n=1$, the short form $\xCn{{m,\alpha}}(\overline{O})$ might be used. Furthermore, $\xLn{p}$, $\mathcal{D}'$ and $\xCn{m}$ stand for Lebesgue spaces with $p\in [1, \infty]$, the spaces of distributions and $m$-times continuously differentiable functions, $m \geq 0$, respectively. Regarding the domain $\Omega$, the following space is occasionally employed:
	\[
	\xCn{{m,\alpha}}_{\sigma,\Gamma_0}(\overline{\Omega}; \mathbb{R}^2) := \{ \mathbf{f} \in \xCn{{m,\alpha}}(\overline{\Omega}; \mathbb{R}^2) \, | \, \operatorname{div}(\mathbf{f}) = 0 \mbox{ in } \Omega, \mathbf{f} \cdot \mathbf{n} = 0 \mbox{ on } \Gamma\setminus\Gamma_0 \}.
	\]
	Moreover, concerning spatial derivatives, the following notations are mainly used:
	\begin{itemize}
		\item $\partial^{\boldsymbol{\beta}}$, with $\boldsymbol{\beta} \in \left(\mathbb{N}_0\right)^2$, for spatial derivatives in general and  $\partial_l^m$ ($\partial_l$ if $m =1$), $l = 1,2$, for the partial derivatives of order $m \in \mathbb{N}_0$ in direction ${\bf e}_l$, where $(\mathbf{e}_1,\mathbf{e}_2 )$ is the standard orthonormal basis of $\mathbb{R}^2$.
		\item $\boldsymbol{\nabla}$ and $\operatorname{div}$ denote the gradient and the divergence respectively.
		\item If $\psi$ is a differentiable scalar function of two variables and $\mathbf{v} = (v_1, v_2)'$ a differentiable vector field of two variables, then $\boldsymbol{\nabla}^{\perp}\psi := (\partial_2 \psi,- \partial_1 \psi)'$ and $\operatorname{curl}(\mathbf{v}) :=  \partial_1 v_2 - \partial_2 v_1$.
	\end{itemize}
	
	The main results proved in this article are stated in Theorems~\Rref{theorem:main} and ~\Rref{theorem:main1} below.
	\begin{thrm}\label{theorem:main}
		Let the integer $\tilde{m} \geq 3$ and the control time $T > 0$ be fixed. Then, for all initial- and final data $(\mathbf{z}^+_0, \mathbf{z}^-_0, \mathbf{z}^+_T, \mathbf{z}^-_T) \in \xCn{{\tilde{m},\alpha}}_{\sigma,\Gamma_0}(\overline{\Omega}; \mathbb{R}^2)^4$, there exists a solution $(\mathbf{z}^+, \mathbf{z}^-,p^+, p^-)$ to the control problem \eqref{equation:MHD_Elsaesser} such that
		\begin{equation}\label{equation:theorem1}
			\begin{aligned}
				(\mathbf{z}^+, \mathbf{z}^-, p^+, p^-) & \in &&  \left[ \xCn{0}([0,T];\xCn{{1,\alpha}}(\overline{\Omega}; \mathbb{R}^2)) \cap \xLinfty([0,T];\xCn{{\tilde{m},\alpha}}(\overline{\Omega}; \mathbb{R}^2)) \right]^2 \times \mathcal{D}'(\Omega\times(0,1))^2.
			\end{aligned}
		\end{equation}
	\end{thrm}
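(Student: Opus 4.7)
The plan is to implement Coron's return method in the Elsässer variables. The key structural observation is that \eqref{equation:MHD_Elsaesser} is cross-transport: $\mathbf{z}^{+}$ is convected (in a divergence-free, pressure-corrected sense) by $\mathbf{z}^{-}$, and conversely. Each of the two blocks looks structurally like 2D incompressible Euler, so the high-level strategy that yields global exact controllability for 2D Euler in a simply connected domain — flushing all fluid particles out through the controlled part of the boundary via a time-dependent reference potential flow and closing the nonlinearity by a fixed-point iteration — is a natural template here. The only genuinely new ingredient, compared with Euler, is the need to handle the mutual coupling of the two blocks.

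First, I would construct a reference trajectory $(\overline{\mathbf{z}}^{+},\overline{\mathbf{z}}^{-},\overline{p}^{+},\overline{p}^{-})$ solving \eqref{equation:MHD_Elsaesser} with $\overline{\mathbf{z}}^{\pm}(\cdot,0)=\overline{\mathbf{z}}^{\pm}(\cdot,T)=\mathbf{0}$. A convenient choice is $\overline{\mathbf{z}}^{\pm}(t,\mathbf{x})=\eta(t)\boldsymbol{\nabla}\varphi(\mathbf{x})$, where $\varphi$ is harmonic near $\overline{\Omega}$ with $\partial_{2}\varphi=0$ on the horizontal walls and $\partial_{1}\varphi$ of constant sign, and $\eta\in\xCinfty([0,T])$ vanishes at the endpoints but is large in the middle. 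The identity $(\boldsymbol{\nabla}\varphi\cdot\boldsymbol{\nabla})\boldsymbol{\nabla}\varphi=\tfrac{1}{2}\boldsymbol{\nabla}|\boldsymbol{\nabla}\varphi|^{2}$ makes this a bona fide solution, and $\eta$ can be chosen so that both forward characteristics from $t=0$ and backward characteristics from $t=T$ exit $\Omega$ through $\Gamma_{0}$ before the central time $T/2$.

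Next, I would extend $\mathbf{z}_{0}^{\pm}$ and $\mathbf{z}_{T}^{\pm}$ to divergence-free Hölder fields on a larger strip $\widetilde{\Omega}\supsetneq\Omega$ with buffer zones on either side of $\Omega$, still satisfying the slip condition on the horizontal walls, and seek $\mathbf{z}^{\pm}=\overline{\mathbf{z}}^{\pm}+\mathbf{y}^{\pm}$ in which the perturbations are propagated along the characteristics of $\mathbf{z}^{\mp}$. Because the reference flow sweeps everything out of $\Omega$ through $\Gamma_{0}$, the forward piece evolved from $\mathbf{z}_{0}^{\pm}$ on $[0,T/2]$ and the backward piece evolved from $\mathbf{z}_{T}^{\pm}$ on $[T/2,T]$ both reduce to zero inside $\Omega$ at the matching time $T/2$, so they glue. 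To close the coupling I would set up a Schauder fixed point: given a candidate pair of Hölder-regular divergence-free fields, solve the two transport equations for the vorticities $\omega^{\pm}:=\operatorname{curl}(\mathbf{z}^{\pm})$ — each along the flow of the opposite field and carrying the quadratic cross-source produced by commuting $\operatorname{curl}$ with $(\mathbf{z}^{\mp}\cdot\boldsymbol{\nabla})$ — then reconstruct $\mathbf{z}^{\pm}$ from the elliptic div-curl system on $\Omega$ with tangential and normal data determined by the construction, and finally recover $p^{\pm}$ as distributions by Leray projection. Standard Hölder estimates for linear transport and for the div-curl problem supply compactness and continuity.

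The main obstacle I expect is the cross-source term $\partial_{i}z^{-}_{j}\,\partial_{k}z^{+}_{\ell}$ in the vorticity equation: unlike the 2D Euler case, the Elsässer vorticities $\omega^{\pm}$ are not purely transported, and this quadratic mismatch between $\boldsymbol{\nabla}\mathbf{z}^{+}$ and $\boldsymbol{\nabla}\mathbf{z}^{-}$ does not shrink with the fluctuation amplitude alone. Controlling it uniformly through the iteration — together with producing the gluing of the forward and backward pieces at the matching time with the required regularity — is what presumably forces the threshold $\tilde{m}\geq 3$ and motivates the weighted space alluded to in the abstract. Once these are in place, the distributional pressures $p^{\pm}$ can be recovered essentially for free from the divergence-free constraint.
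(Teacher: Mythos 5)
Your proposal correctly identifies the overall framework — Elsässer variables, a return-method flushing trajectory, transport of the curls along cross-characteristics, reconstruction via a div-curl problem, and a fixed-point closure — and in that sense it is faithful to the paper's strategy. But the two technical obstacles you flag at the end are exactly the places where the paper's proof lives, and as stated your plan does not resolve them; two genuine gaps remain.

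First, the forward/backward gluing you describe fails as stated. You claim that the forward vorticity evolved from $\mathbf{z}_0^{\pm}$ and the backward one evolved from $\mathbf{z}_T^{\pm}$ ``both reduce to zero inside $\Omega$ at the matching time $T/2$'' because the characteristics of the reference flow exit through $\Gamma_0$. That conclusion holds for a \emph{homogeneous} transport equation, but the curled Elsässer system has the nonvanishing quadratic cross-source $g^{\pm}$. The value of $j^{\pm}$ at a point of $\Omega$ at the matching time is the value of the extended initial datum at the foot of the characteristic \emph{plus} the time-integral of $g^{\pm}$ along that characteristic, and the latter is not zero. The paper fixes this with a dedicated construction (equation \eqref{equation:constructionF:InitialDataExtension}): the initial vorticity is extended to a nonphysical region $\Omega_3 \setminus \overline{\Omega}_2$ by subtracting exactly the accumulated source $\int_0^1 G^{\pm}(\boldsymbol{\mathcal{Z}}^{\mp}(\mathbf{x},0,\sigma),\sigma)\,d\sigma$, so that $j^{\pm}(\cdot,1)=0$ in $\Omega_1$ (see \eqref{equation:jpmarezero}). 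Without some such pre-compensation of the source, the gluing step does not close.

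Second, you correctly observe that the cross-source ``does not shrink with the fluctuation amplitude alone,'' but your proposal offers no remedy, and a Schauder fixed point on the raw Hölder scale would not give you smallness of the iterate. The paper's resolution is structural: because $\mathbf{z}^{\pm}$ are divergence-free and the reference field $\overline{\mathbf{y}}$ is \emph{spatially constant} on $\Omega$ (this is precisely what the rectangular geometry buys, together with the choice $\mathbf{y}^*=\gamma(t)\chi(\mathbf{x})\mathbf{e}_1$), the coupling can be rewritten as in \eqref{equation:sourceterm_structured_y} with every factor a derivative of a \emph{fluctuation} $z^{\pm}-\overline{y}$. This makes $g^{\pm}$ genuinely quadratic in the deviation from the reference, which combined with the weight $\omega_k$ from \eqref{equation:weight} and the key estimate \eqref{equation:weight_trick} turns the Picard map into a Banach contraction. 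Your proposed reference flow $\eta(t)\boldsymbol{\nabla}\varphi$ would only give this structure if $\boldsymbol{\nabla}\varphi$ were constant on $\Omega$; you should make that explicit (and in fact on a rectangle with the boundary conditions you impose, harmonicity forces $\varphi$ to be affine in $x_1$, so your reference flow collapses to the paper's constant flow — but the argument hinges on stating and exploiting this). Lastly, the theorem as stated in the paper is deduced from a \emph{local} null controllability result (Proposition~\ref{proposition:smalldata}) by a scaling-to-small-data step and time reversibility, which is cleaner than trying to run the full-data forward and backward halves directly and glue them at $T/2$.
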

	
	Regarding the original system of incompressible ideal magnetohydrodynamics \eqref{equation:MHD00}, we conclude the following small-time global exact controllability result.
	\begin{thrm}\label{theorem:main1}
		Let the integer $\tilde{m} \geq 3$ and $T > 0$ be fixed. Then, for all initial- and final data $(\mathbf{u}_0, \mathbf{H}_0, \mathbf{u}_T, \mathbf{H}_T) \in \xCn{{\tilde{m},\alpha}}_{\sigma,\Gamma_0}(\overline{\Omega}; \mathbb{R}^2)^4$ there exists a solution
		\begin{equation*}
			\begin{aligned}
				(\mathbf{u}, \mathbf{H}, p, q) & \in && \left[ \xCn{0}([0,T];\xCn{{1,\alpha}}(\overline{\Omega}; \mathbb{R}^2)) \cap \xLinfty([0,T];\xCn{{\tilde{m},\alpha}}(\overline{\Omega}; \mathbb{R}^2)) \right]^2 \times \mathcal{D}'(\Omega\times(0,1))^2,
			\end{aligned}
		\end{equation*}
		with $q(\cdot,t)$ being for each $t \in [0,T]$ a harmonic function as in \eqref{equation:qq}, to the system
		\begin{equation}\label{equation:MHD}
			\begin{cases}
				\partial_t \mathbf{u} + (\mathbf{u} \cdot \boldsymbol{\nabla}) \mathbf{u} - \mu(\mathbf{H} \cdot \boldsymbol{\nabla})\mathbf{H} + \boldsymbol{\nabla} p = \mathbf{0}, & \mbox{ in } \Omega \times (0, T),\\
				\partial_t \mathbf{H} + (\mathbf{u} \cdot \boldsymbol{\nabla}) \mathbf{H} - (\mathbf{H} \cdot \boldsymbol{\nabla}) \mathbf{u} + \boldsymbol{\nabla} q = \mathbf{0}, & \mbox{ in } \Omega \times (0, T),\\
				\operatorname{div}(\mathbf{u}) = \operatorname{div}(\mathbf{H}) = 0,  & \mbox{ in } \Omega \times (0, T),\\
				\mathbf{u} \cdot \mathbf{n} = \mathbf{H} \cdot \mathbf{n} = 0,  & \mbox{ on } (\Gamma\setminus\Gamma_0) \times (0, T),\\
				\mathbf{u}(\cdot, 0)  =  \mathbf{u}_0,\, \mathbf{H}(\cdot, 0)  =  \mathbf{H}_0  & \mbox{ in } \Omega,
			\end{cases}
		\end{equation}
		and fulfilling the terminal condition
		\begin{equation}\label{equation:MHD-Endcondition}
			\begin{aligned}
				\mathbf{u}(\cdot, T)  =  \mathbf{u}_T,\, \mathbf{H}(\cdot, T)  =  \mathbf{H}_T  & \mbox{ in } \Omega.
			\end{aligned}
		\end{equation}
	\end{thrm}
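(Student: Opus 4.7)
My plan is to obtain Theorem~\Rref{theorem:main1} as a direct corollary of Theorem~\Rref{theorem:main} by unfolding the Els\"asser variables. Given the MHD data $(\mathbf{u}_0, \mathbf{H}_0, \mathbf{u}_T, \mathbf{H}_T)$, I form the associated Els\"asser initial and terminal states $\mathbf{z}_0^\pm := \mathbf{u}_0 \pm \sqrt{\mu}\mathbf{H}_0$ and $\mathbf{z}_T^\pm := \mathbf{u}_T \pm \sqrt{\mu}\mathbf{H}_T$; these inherit membership in $\xCn{{\tilde{m},\alpha}}_{\sigma,\Gamma_0}(\overline{\Omega}; \mathbb{R}^2)$ by linearity. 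Theorem~\Rref{theorem:main} then delivers a controlled trajectory $(\mathbf{z}^+, \mathbf{z}^-, p^+, p^-)$ with the regularity in \eqref{equation:theorem1}, and I set
\[
\mathbf{u} := \tfrac{1}{2}(\mathbf{z}^+ + \mathbf{z}^-), \quad \mathbf{H} := \tfrac{1}{2\sqrt{\mu}}(\mathbf{z}^+ - \mathbf{z}^-), \quad p := \tfrac{1}{2}(p^+ + p^-), \quad q := \tfrac{1}{2\sqrt{\mu}}(p^+ - p^-).
\]

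The next step is to check that $(\mathbf{u}, \mathbf{H}, p, q)$ satisfies the first two equations of \eqref{equation:MHD} by forming suitable linear combinations of the Els\"asser equations. The algebraic identities
\[
(\mathbf{z}^- \cdot \boldsymbol{\nabla})\mathbf{z}^+ + (\mathbf{z}^+ \cdot \boldsymbol{\nabla})\mathbf{z}^- = 2(\mathbf{u} \cdot \boldsymbol{\nabla})\mathbf{u} - 2\mu(\mathbf{H} \cdot \boldsymbol{\nabla})\mathbf{H},
\]
\[
(\mathbf{z}^- \cdot \boldsymbol{\nabla})\mathbf{z}^+ - (\mathbf{z}^+ \cdot \boldsymbol{\nabla})\mathbf{z}^- = 2\sqrt{\mu}\bigl[(\mathbf{u} \cdot \boldsymbol{\nabla})\mathbf{H} - (\mathbf{H} \cdot \boldsymbol{\nabla})\mathbf{u}\bigr]
\]
then immediately produce the momentum and induction equations in $(\mathbf{u}, \mathbf{H})$ with pressure-like contributions $\boldsymbol{\nabla}p$ and $\boldsymbol{\nabla}q$. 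The divergence-free constraints, the non-penetration boundary conditions on $\Gamma\setminus\Gamma_0$, and the initial and terminal data \eqref{equation:MHD-Endcondition} all transfer directly from the corresponding properties of $\mathbf{z}^\pm$.

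The only step requiring care is to verify that $q(\cdot, t)$ is harmonic and satisfies the Neumann data prescribed in \eqref{equation:qq} for each $t \in [0, T]$. Although $p^\pm$ and hence $q$ are a priori only distributions, I plan to use the induction equation itself, read as
\[
\boldsymbol{\nabla} q = -\partial_t \mathbf{H} - (\mathbf{u} \cdot \boldsymbol{\nabla})\mathbf{H} + (\mathbf{H} \cdot \boldsymbol{\nabla})\mathbf{u},
\]
whose right-hand side enjoys classical H\"older regularity inherited from $(\mathbf{u},\mathbf{H})$. Taking the divergence of this identity and invoking $\operatorname{div}\mathbf{u} = \operatorname{div}\mathbf{H} = 0$ yields $\Delta q = 0$ in $\Omega$. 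Dotting with $\mathbf{n}$ along $\Gamma$ produces the Neumann conditions: on the horizontal walls $\Gamma\setminus\Gamma_0$, the conditions $u_2 = H_2 \equiv 0$ make the relevant tangential derivatives of $u_2$ and $H_2$ vanish, so that $\partial_{\mathbf{n}} q = 0$ there, while on the vertical walls $\Gamma_0$ the direct computation reproduces the expression displayed in \eqref{equation:qq}.

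No substantial obstacle remains at this stage, since the nonlinear controllability work is fully packaged into Theorem~\Rref{theorem:main}. The subtlest point will be the distributional status of the pressures: $\boldsymbol{\nabla}p$ and $\boldsymbol{\nabla}q$ must be handled via the MHD equations themselves rather than as literal gradients of $p$ and $q$, but this causes no real difficulty because the non-pressure terms already possess the requisite H\"older regularity.
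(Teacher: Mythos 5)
Your proposal is correct, and it reaches the conclusion along a route that is logically equivalent to but organizationally different from the paper's. The paper does not derive Theorem~\ref{theorem:main1} from Theorem~\ref{theorem:main}; instead it proves Theorem~\ref{theorem:main1} directly from Proposition~\ref{proposition:smalldata} (the local null-controllability result), exploiting the time-reversal symmetry \eqref{equation:reverse}, $\epsilon$-scaling of the data so the small-data hypothesis of the proposition applies, and gluing the two controlled arcs at the MHD level, and then remarks that Theorem~\ref{theorem:main} follows ``along the same line.'' You instead treat Theorem~\ref{theorem:main} as a black box and obtain Theorem~\ref{theorem:main1} by the linear Els\"asser change of variables. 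Because that change of variables is linear and commutes with the $\epsilon$-scaling and the time reversal, the two organizations give the same object; yours is more modular, since all the nonlinear work is sealed inside Theorem~\ref{theorem:main}. Your algebraic checks — the two identities converting the Els\"asser convective terms into $2(\mathbf{u}\cdot\boldsymbol{\nabla})\mathbf{u}-2\mu(\mathbf{H}\cdot\boldsymbol{\nabla})\mathbf{H}$ and $2\sqrt{\mu}\bigl[(\mathbf{u}\cdot\boldsymbol{\nabla})\mathbf{H}-(\mathbf{H}\cdot\boldsymbol{\nabla})\mathbf{u}\bigr]$, the transfer of $\operatorname{div}$, non-penetration, and the initial/terminal data — are all correct.

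One place where the phrasing should be tightened: to upgrade $q$ from a distribution to a genuine harmonic function you should first take the divergence of the induction equation \emph{in} $\mathcal{D}'$ and use $\operatorname{div}\bigl((\mathbf{u}\cdot\boldsymbol{\nabla})\mathbf{H}-(\mathbf{H}\cdot\boldsymbol{\nabla})\mathbf{u}\bigr)=\partial_i u_j\,\partial_j H_i-\partial_i H_j\,\partial_j u_i=0$ (valid for divergence-free $\mathbf{u},\mathbf{H}$) to obtain $\Delta q=0$ distributionally, and then invoke Weyl's lemma to conclude interior smoothness of $q(\cdot,t)$. As written, you propose reading $\boldsymbol{\nabla}q$ pointwise off the induction equation, which implicitly assumes that $\partial_t\mathbf{H}$ is classically defined — but that is precisely what one cannot assume a priori when $p^\pm$ are only known distributionally. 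This is a minor presentation issue, not a gap: the paper itself handles the pressures with the same level of informality.
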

	
	\begin{rmrk}
		In order to avoid inconveniences caused by the corners of $\Omega$, one could replace $\Omega$ for instance by a rounded rectangle. Then the controlled boundary would consist of the two lateral curves connecting the bottom and top side. For example, instead of $\Omega$ one could consider the system \eqref{equation:MHD00} directly in $\Omega_1$, as introduced below in Section~\Rref{section:prelim}, without changing the proofs.
	\end{rmrk}
	
	\begin{rmrk}\label{remark:bc}
		Instead of constructing the controls acting on $\Gamma_0$ explicitly, \eqref{equation:MHD_Elsaesser} and \eqref{equation:MHD} are treated as underdetermined problems. There are thus different possibilities for choosing boundary controls by means of suitably restricting solutions to $\Gamma_0$. This approach has been discussed for instance in \cite{BoundaryControl_PerfectFluid_2D_Coron_1,BoundaryControl_PerfectFluid_2D_Coron_2,1996Navier_Coron,BoundaryControl_NavierBC_Coron_Marbach_Sueur}. In the present case, a natural example for such a choice of controls in the Els\"asser system would be
		\begin{equation}\label{equation:control}
			\begin{cases}
				\mathbf{z}^+ \cdot \mathbf{n} \mbox{ and } \mathbf{z}^- \cdot \mathbf{n} \mbox{ on } \Gamma_0 \times (0,1) \mbox{ such that } \int_{\Gamma_0} \mathbf{z}^+ \cdot \mathbf{n} \, d\Gamma = \int_{\Gamma_0} \mathbf{z}^- \cdot \mathbf{n} \, d\Gamma = 0,\\
				\mathbf{z}^+ \mbox{ on } \{ \mathbf{x} \in \Gamma_0 \, | \, \mathbf{z}^-(\mathbf{x}) \cdot \mathbf{n}(\mathbf{x}) < 0 \},\\
				\mathbf{z}^- \mbox{ on } \{ \mathbf{x} \in \Gamma_0 \, | \, \mathbf{z}^+(\mathbf{x}) \cdot \mathbf{n}(\mathbf{x}) < 0 \}.
			\end{cases}
		\end{equation}
		Since our results include the existence of solutions, knowing uniqueness renders the given approach already meaningful. For this purpose, one can use $\xLn{2}$ energy estimates for the difference $\mathbf{Z}^{\pm} = \mathbf{z}^{\pm,1} - \mathbf{z}^{\pm,2}$ of two solutions $\mathbf{z}^{\pm,i}$, $i=1,2$ to \eqref{equation:theorem1}, having the same data:
		\begin{equation*}
			\begin{aligned}
				& \|\mathbf{Z}^{+}\|_{\xLn{2}(\Omega)}^2(t) + \|\mathbf{Z}^{-}\|_{\xLn{2}(\Omega)}^2(t) \\
				& \leq 2\max\left\{ \sup\limits_{s \in [0,T]}\|\mathbf{z}^{+,2}\|_{\xCn{1}(\overline{\Omega})}(s),  \sup\limits_{s \in [0,T]} \|\mathbf{z}^{-,2}\|_{{\xCn{1}}(\overline{\Omega})}(s)\right\} \int_0^t \left[\|\mathbf{Z}^{+}\|_{\xLn{2}(\Omega)}^2(s)+\|\mathbf{Z}^{-}\|_{\xLn{2}(\Omega)}^2(s)\right] \, ds.
			\end{aligned}
		\end{equation*}
		Due to \eqref{equation:control}, the boundary terms which appear when deriving the above formula by means of integration by parts either vanish or have a good sign.
	\end{rmrk}
	
	\begin{rmrk} The assumption $\tilde{m} \geq 3$ in Theorems~\Rref{theorem:main} and ~\Rref{theorem:main1} is employed in \eqref{equation:reg3}. But also at other points of Section~\Rref{section:proof_local_result}, for the sake of applying Banach's Fixed Point Theorem, the given data has to be of higher regularity, say $\xCn{{2,\alpha}}(\overline{\Omega}; \mathbb{R}^2)$ compared to $\xCn{0}([0,T];\xCn{{1,\alpha}}(\overline{\Omega}; \mathbb{R}^2))$ for a corresponding controlled solution, which can likewise be observed in related works such as \cite{BoundaryControl_EulerBoussinesq_Cara_Santos_Souza,BoundaryControlEuler3D_Glass}.
	\end{rmrk}
	
	\begin{rmrk}
		The choice of a rectangular domain allows us to write \eqref{equation:sourceterm_structured_y}, which is essential for using the weight $\omega_{k}$ in Section~\Rref{section:proof_local_result} (for example in the estimate \eqref{equation:auxilliary:bound_j+_b}). The crucial part in \eqref{equation:auxilliary:bound_j+_b} is the possibility of making $\omega_k(t)\int_0^1 \omega_k(s)^{-2} \, ds$ arbitrarily small by increasing $k > 0$. Hereby, the squared inverse of the weight inside of the integral is due to \eqref{equation:sourceterm_structured_y} and we could not obtain such a weight that also works for $\omega_k(t)\int_0^1 \omega_k(s)^{-1} \, ds$.
	\end{rmrk}
	
	\subsection{Related literature and outline of the proofs}
	Up to our knowledge, the question of exact controllability for incompressible ideal magnetohydrodynamics and the corresponding Els\"asser systems has not been addressed in the mathematical literature before. This is in contrast to the situation for perfect fluids with $\mathbf{H} \equiv 0$, where global exact controllability for the incompressible Euler system has been shown for instance in \cite{BoundaryControl_PerfectFluid_2D_Coron_1} by Coron, in \cite{BoundaryControlEuler3D_Glass} by Glass and with Boussinesq heat effects taken into account by Fern\'{a}ndez-Cara \etal{} in \cite{BoundaryControl_EulerBoussinesq_Cara_Santos_Souza}.
	
	In the case of magnetohydrodynamics involving viscous fluids, we can name for instance the work \cite{MHDCompr_Tao} by Tao concerning local exact controllability for compressible magnetohydrodynamics in a planar setting with controls on the whole boundary. Moreover, for incompressible fluids, Badra obtains in \cite{ContrTraj_Badra} local exact controllability to trajectories. Previous results in this direction have been achieved in \cite{LocalMHD_Barbu} by Barbu \etal{} and by Hav\^{a}rneanu \etal{} in \cite{LocalInternal2,LocalInternal1}. Moreover, Anh and Toi in \cite{ContrTraj_Anh_Toi} establish local exact controllability for a magneto-micropolar fluid model and Galan studies in \cite{ApproxMHD_Galan} approximate controllability by means of interior controls in a three-dimensional torus.
	
	Regarding basic well-posedness results for ideal incompressible magnetohydrodynamics in bounded domains, we refer to the study \cite{Schmidt} by Schmidt and to further results in this direction which have been shown by Secchi in \cite{Secchi2}. Moreover, in the article \cite{MHD_Bardos_Sulem} by Bardos \etal, long-time a priori estimates in H\"older norms for solutions in the full space case have been given.
	
	In the absence of magnetic fields, when the Navier-Stokes or Euler equations are considered, many results on controllability have been obtained so far. Recent progress, open problems and further references may for instance be found in the articles \cite{BoundaryControl_NavierBC_Coron_Marbach_Sueur, Coron_Phantom} by Coron \etal, which are concerned with global exact controllability for incompressible Navier-Stokes equations. With respect to compressible models we refer to \cite{Ervedoza_compr1,Ervedoza_compr2} by Ervedoza \etal{} and to \cite{Molina} by Molina.
	
	For proving Theorems~\Rref{theorem:main} and ~\Rref{theorem:main1} we follow in particular the approach of \cite{BoundaryControl_PerfectFluid_2D_Coron_1,BoundaryControl_EulerBoussinesq_Cara_Santos_Souza}, but face several obstructions in the details when it comes to ideal magnetohydrodynamics:
	\begin{itemize}
		\item The strong nonlinear coupling of the equations for $\mathbf{u}$ and $\mathbf{H}$ in \eqref{equation:MHD00} seems to prevent the strategy of \cite{BoundaryControl_EulerBoussinesq_Cara_Santos_Souza}, where the return method for the Euler equations is adapted to account for the additional coupling due to Boussinesq heat effects. In order to avoid loss of regularity in the fixed point iteration, we work with the system \eqref{equation:MHD_Elsaesser} instead of \eqref{equation:MHD00}.
		\item Employing the formulation \eqref{equation:MHD_Elsaesser} has the drawback that the idea of flushing the domain with zero vorticity, as previously used for instance in \cite{BoundaryControl_PerfectFluid_2D_Coron_1,BoundaryControlEuler3D_Glass,BoundaryControl_EulerBoussinesq_Cara_Santos_Souza}, is challenged by the coupling terms.
		\item How to meet the requirements imposed by the induction equation at the controlled boundary parts, see \eqref{equation:qq}, while allowing in- and outflow? This leads to the presence of $\boldsymbol{\nabla}q$ in \eqref{equation:MHD}.
	\end{itemize}
	In order to transship some of the mentioned hindrances, we work on the linearized level with suitably defined initial data in a nonphysical part of an extended domain and moreover introduce a certain weight that enables sufficient estimates, despite the presence of inhomogeneities and modified data in the linearized systems. Finding this weight heavily relies on the special geometry of the domain, which allows for a very simple return method trajectory that is compatible with the nature of the coupling terms.
	
	This article is organized as follows. In Section~\Rref{section:prelim}, a return method trajectory is introduced and some auxiliaries are presented. In Section~\Rref{section:newVars}, Theorems~\Rref{theorem:main} and~\Rref{theorem:main1} are established under the assumption that a certain local null controllability result holds. The main part of this article is then concerned with proving this local result in Section~\Rref{section:proof_local_result}.
	
	\section{Preparations and notations}\label{section:prelim}
	In this section, following the idea of \cite{BoundaryControl_PerfectFluid_2D_Coron_1}, a return method trajectory for \eqref{equation:MHD00} is introduced and several auxiliary results will be recalled. For technical reasons, given a positive constant $l > 0$, the non-physical extensions $\Omega_1, \Omega_2$ and $\Omega_3$ are, as depicted in Figure~\Rref{Figure:duct} below, defined via
	\[
	\Omega_2 := (-l, L+l) \times (-l, W+l),
	\]
	and such that $\overline{\Omega} \subseteq \overline{\Omega}_1 \subseteq \Omega_2$, $\overline{\Omega}_2 \subseteq \Omega_3$, with $\Omega_1 \subseteq \{ \mathbf{x} = (x_1,x_2)' \in \mathbb{R}^2 \, | \, 0 \leq x_2 \leq W\}$ and $\Omega_3 \subseteq \mathbb{R}^2$, both being bounded open sets with smooth boundary and outward normal vectors denoted again by $\mathbf{n}$. Moreover, let $\gamma \in \xCn{\infty}([0,1]; [0, +\infty))$ be a non-negative smooth function such that
	\begin{itemize}
		\item  $\operatorname{supp}(\gamma) \subseteq (0,1)$,
		\item $\gamma(t) = M$ for all $t \in [1/4, 3/4]$ where $M > 0$ will be specified below.
	\end{itemize}
	\begin{figure}[ht!]
		\centering
		\resizebox{0.6\textwidth}{!}{
			\begin{tikzpicture}
				\clip(-2.2,-1.1) rectangle (9.8,3.8);
				%Omega
				\draw[line width=0.3mm, fill=blue!5] plot[smooth cycle] (0,0) rectangle ++(8,2);
				\draw[line width=1mm] (0,-0.015) -- (0,2.015);
				\draw[line width=1mm] (8,-0.015) -- (8,2.015);
				%Omega_1
				\draw[line width=0.1mm, dashed, rounded corners=2mm] (-0.5,0) rectangle ++(8.9,2);
				%Omega_2
				\draw[line width=0.2mm, dotted] plot[smooth cycle] (-0.75,-0.75) rectangle ++(9.5,3.5);
				%Omega_3
				\draw[line width=0.2mm, dotted, rounded corners=5mm] (-1.6,-1.1) rectangle ++(11.2,4.2);
				%x_1-axis & Labels
				\coordinate[label=left:$\Omega$] (A) at (4.5,3.5);
				\coordinate[label=right:$\quad$] (B) at (4,1);
				\draw[line width=0.1mm] (A) -- (B);
				\fill (B) circle (2pt);
				\coordinate[label=left:$\Omega_2$] (A2) at (1.4,3.5);
				\coordinate[label=right:$\quad$] (B2) at (0.5,2.25);
				\draw[line width=0.1mm] (A2) -- (B2);
				\fill (B2) circle (2pt);
				\coordinate[label=left:$\Omega_3$] (A22) at (6.5,3.5);
				\coordinate[label=right:$\quad$] (B22) at (7,2.9);
				\draw[line width=0.1mm] (A22) -- (B22);
				\fill (B22) circle (2pt);
				\coordinate[label=left:$\Gamma_0$] (A3) at (-0.5,3.5);
				\coordinate[label=right:$\quad$] (B3) at (0,0.75);
				\draw[line width=0.1mm] (A3) -- (B3);
				\coordinate[label=left:$\Gamma_0$] (A4) at (8.6,3.5);
				\coordinate[label=right:$\quad$] (B4) at (8,0.75);
				\draw[line width=0.1mm] (A4) -- (B4);
				\coordinate[label=left:$\Omega_1$] (A5) at (-1.5,3.5);
				\coordinate[label=right:$\quad$] (B5) at (-0.2,0.75);
				\draw[line width=0.1mm] (A5) -- (B5);
				\fill (B5) circle (2pt);
				%			\coordinate[label=left:$\Omega_1$] (A4) at (9.6,3.5);
				%			\coordinate[label=right:$\quad$] (B4) at (8.2,0.75);
				%			\draw[line width=0.1mm] (A4) -- (B4);
				%arrows
				\draw[thin,->] (1.2,0.4) -- (2,0.4);
				\draw[thin,->] (1.2,0.8) -- (2,0.8);
				\draw[thin,->] (1.2,1.2) -- (2,1.2);
				\draw[thin,->] (1.2,1.6) -- (2,1.6);
				\draw[thin,->] (6.0,0.4) -- (6.8,0.4);
				\draw[thin,->] (6.0,0.8) -- (6.8,0.8);
				\draw[thin,->] (6.0,1.2) -- (6.8,1.2);0
				\draw[thin,->] (6.0,1.6) -- (6.8,1.6);
			\end{tikzpicture}
		}
		\caption{A sketch of the rectangular domain $\Omega$ and its extensions, while the arrows indicate $\overline{\mathbf{y}}(\mathbf{x},t)$.}
		\label{Figure:duct}
	\end{figure}
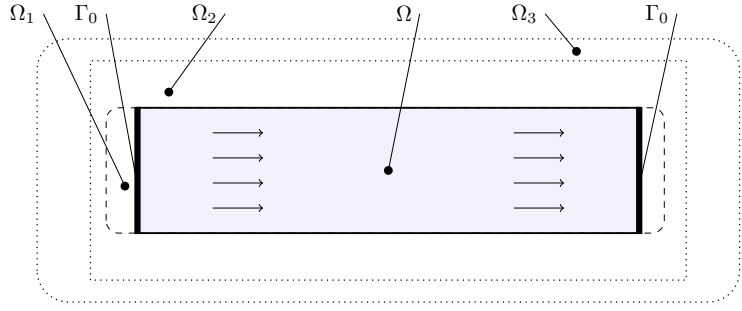
	Then, with $\chi \in \xCn{\infty}_0(\overline{\Omega_3})$ denoting a smooth cutoff function satisfying $\chi(\mathbf{x}) = 1$ for $\mathbf{x} \in \overline{\Omega}_2$, we define
	\[
	\mathbf{y}^*(\mathbf{x}, t) := \gamma(t) \chi(\mathbf{x}) \mbox{\bf e}_1 = \binom{\gamma(t) \chi(\mathbf{x})}{0}, \quad (\mathbf{x},t) \in \overline{\Omega}_3 \times [0,1].
	\]
	and in particular, the functions $(\overline{\mathbf{y}}, \overline{\mathbf{H}}, \overline{p}, \overline{q})$ which are given by
	\[
	\begin{cases}
		\overline{\mathbf{y}}(\mathbf{x}, t) := \mathbf{y}^*(x,t), & (\mathbf{x},t) \in \Omega \times [0,1],\\
		\overline{\mathbf{H}}(\mathbf{x}, t) := \mathbf{0},& (\mathbf{x},t) \in \Omega \times [0,1],\\
		\overline{p}(\mathbf{x},t) := -x_1\xdrv{}{t} \gamma(t), & (\mathbf{x},t) \in \Omega \times [0,1],\\
		\overline{q}(\mathbf{x},t) := 0, & (\mathbf{x},t) \in \Omega \times [0,1],
	\end{cases}
	\]
	where $\mathbf{x}=(x_1,x_2)'$, solve \eqref{equation:MHD} with $T = 1$ for the data $\mathbf{u}_0 = \mathbf{u}_T = \mathbf{H}_0 = \mathbf{H}_T = \mathbf{0}$.
	
	Henceforth, let $\boldsymbol{\mathcal{Y}}(\mathbf{x},s,t)$ denote for $(\mathbf{x},s,t) \in \overline{\Omega}_3\times[0,1]\times[0,1]$ the flow corresponding to $\mathbf{y}^*$ which solves the ordinary differential equation
	\begin{equation}\label{equation:flowofy}
		\begin{cases}
			\xdrv{}{t} \boldsymbol{\mathcal{Y}}(\mathbf{x},s,t) = \mathbf{y}^*(\boldsymbol{\mathcal{Y}}(\mathbf{x},s,t),t),\\
			\boldsymbol{\mathcal{Y}}(\mathbf{x},s,s) = \mathbf{x}.
		\end{cases}
	\end{equation}
	That \eqref{equation:flowofy} admits a unique solution is due to the global Lipschitz condition satisfied by $\mathbf{y}^*$. Notably, $\boldsymbol{\mathcal{Y}}$ makes every point in $\overline{\Omega}_2$ leave this domain until $t = 1$ as long as $M > 0$ is sufficiently large, which is a result that in greater generality may be found in \cite{BoundaryControl_PerfectFluid_2D_Coron_1,BoundaryControlEuler3D_Glass}.
	\begin{lmm}\label{lemma:Y}
		The constant $M > 0$ can be chosen large enough, such that $\boldsymbol{\mathcal{Y}}(\mathbf{x}, 0, 1) \notin \overline{\Omega}_2$ for all $\mathbf{x} \in \overline{\Omega}_2$.
	\end{lmm}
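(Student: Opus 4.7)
The plan is to exploit the extremely simple structure of the return trajectory: the velocity $\mathbf{y}^*(\mathbf{x},t) = \gamma(t)\chi(\mathbf{x})\mathbf{e}_1$ points purely in the horizontal direction, and both $\gamma$ and the cutoff $\chi$ are nonnegative. So writing $\boldsymbol{\mathcal{Y}}(\mathbf{x},0,t) = (Y_1(t), Y_2(t))$ with $\mathbf{x}=(x_1,x_2)$, the second component is constant, $Y_2(t) \equiv x_2 \in [-l, W+l]$, and the first component is non-decreasing in $t$. In particular, once a trajectory exits $\overline{\Omega}_2$ through the right wall $\{x_1 = L+l\}$, it can never come back, since $Y_2$ is frozen and $Y_1$ is monotone.

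The strategy would then be a short contradiction argument. Suppose $M > 0$ is fixed and suppose for some $\mathbf{x}\in\overline{\Omega}_2$ the trajectory still lies in $\overline{\Omega}_2$ at time $t=1$, i.e.\ $Y_1(1) \leq L+l$. Monotonicity of $Y_1$ and $Y_1(0)=x_1 \geq -l$ then force $(Y_1(t),x_2) \in \overline{\Omega}_2$ for every $t\in[0,1]$, and by the defining property of $\chi$ this yields $\chi(Y_1(t),x_2) = 1$ throughout. The ODE \eqref{equation:flowofy} then reduces to $\dot Y_1(t) = \gamma(t)$, and integrating between $t=1/4$ and $t=3/4$ (where $\gamma\equiv M$) gives
\[
Y_1(1) \;\geq\; Y_1(3/4) \;=\; x_1 + \int_{1/4}^{3/4} \gamma(s)\xdif s \;\geq\; -l + \tfrac{M}{2}.
\]
Choosing $M > 2(L+2l)$ makes the right-hand side strictly larger than $L+l$, contradicting the assumption. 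Hence for such $M$ one has $Y_1(1) > L+l$ for every starting point $\mathbf{x}\in\overline{\Omega}_2$, which is precisely $\boldsymbol{\mathcal{Y}}(\mathbf{x},0,1)\notin\overline{\Omega}_2$.

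There is essentially no analytical obstacle here: well-posedness of the ODE has already been noted via the global Lipschitz property of $\mathbf{y}^*$, and the estimate above is a one-line integration. The only point requiring a moment of attention is ruling out re-entry into $\overline{\Omega}_2$ once the trajectory has escaped through the right wall, but this is immediate from the fact that $Y_2$ is constant and $Y_1$ is monotone non-decreasing (because $\gamma,\chi \geq 0$). The lemma therefore reduces to the single quantitative choice $M > 2(L+2l)$.
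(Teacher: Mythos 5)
Your proof is correct and follows essentially the same route as the paper: project onto the first coordinate, note that $\chi\equiv 1$ on $\overline{\Omega}_2$ and $\gamma\equiv M$ on $[1/4,3/4]$ force horizontal progress of at least $M/2$, and choose $M$ large (your contradiction framing just makes explicit what the paper leaves implicit, namely that if the trajectory has not escaped by $t=1$ then it must have stayed inside $\overline{\Omega}_2$, where $\chi=1$, the whole time). One cosmetic slip: the line $Y_1(3/4)=x_1+\int_{1/4}^{3/4}\gamma(s)\,ds$ should be an inequality $\ge$, since the true identity is $Y_1(3/4)=x_1+\int_{0}^{3/4}\gamma(s)\chi(\boldsymbol{\mathcal{Y}})\,ds$; the estimate you draw from it is unaffected.
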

	\begin{proof}
		Temporarily denote $\boldsymbol{\mathcal{Y}}(\mathbf{x},t) = \boldsymbol{\mathcal{Y}}(\mathbf{x},0,t)$ and let $\theta$ be the projection $\mathbb{R}^2 \ni (x_1, x_2) \mapsto x_1$. Because of
		\[
		\partial_t (\theta(\boldsymbol{\mathcal{Y}})) (\mathbf{x},t)= \gamma(t) \chi (\boldsymbol{\mathcal{Y}}(\mathbf{x},t)),
		\]
		combined with $\chi = 1$ in $\overline{\Omega}_2$ and $\gamma(t) = M$ for $t \in [1/4, 3/4]$, one can find $M > 0$ large enough in order to guarantee
		$\theta(\boldsymbol{\mathcal{Y}}(\mathbf{x}, 1)) > L+l$ for all $\mathbf{x} \in \overline{\Omega}_2$. However, by construction one has $\theta(\overline{\Omega}_2) = [-l,L+l]$, which implies for all $\mathbf{x} \in \overline{\Omega}_2$ that $\boldsymbol{\mathcal{Y}}(\mathbf{x},1) \notin \overline{\Omega}_2$.
	\end{proof}
	
	As in \cite[Lemma 6]{BoundaryControl_EulerBoussinesq_Cara_Santos_Souza}, the following extension operators are introduced.
	\begin{lmm}\label{lemma:extensionOperators}
		There are linear continuous extension operators $\pi_i\colon \xCn{0}(\overline{\Omega};\mathbb{R}^i) \to \xCn{0}(\overline{\Omega}_3;\mathbb{R}^i)$, $i = 1,2$, such that $\pi_i(f)_{|_{\Omega}} = f$ and $\operatorname{supp}(\pi_i(f)) \subseteq \Omega_2$ for every $f \in \xCn{0}(\overline{\Omega};\mathbb{R}^i)$ and such that $\pi_i$ maps $\xCn{{m, \beta}}(\overline{\Omega};\mathbb{R}^i)$ continuously into $\xCn{{m, \beta}}(\overline{\Omega}_3;\mathbb{R}^i)$ for all integers $m \geq 0$ and $\beta \in (0,1)$.
	\end{lmm}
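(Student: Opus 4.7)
The plan is to build $\pi_i$ in two stages: first extend $f$ from $\overline{\Omega}$ to $\mathbb{R}^2$ in a way that preserves every Hölder seminorm, and then localize by multiplying with a fixed smooth cutoff supported in $\Omega_2$. Since $\Omega$ is a rectangle, hence a Lipschitz domain, I would invoke a Stein/Seeley-type extension theorem to obtain a single linear operator $E\colon \xCn{0}(\overline{\Omega}; \mathbb{R}^i) \to \xCn{0}(\mathbb{R}^2; \mathbb{R}^i)$ which, simultaneously for every integer $m \geq 0$ and every $\beta \in (0,1)$, restricts to a bounded map $\xCn{{m,\beta}}(\overline{\Omega}; \mathbb{R}^i) \to \xCn{{m,\beta}}(\mathbb{R}^2; \mathbb{R}^i)$. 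For the concrete rectangular geometry, an even more elementary construction is available: reflecting $f$ across each of the four sides by a finite Seeley-type linear combination $\sum_k a_k f(\lambda_k \cdot)$ with the coefficients chosen so that all derivatives of order $\leq m$ match across the reflection line, iterated successively in the two coordinate directions, yields an extension to $[-2L, 3L] \times [-2W, 3W]$ with the required continuity on every Hölder space.

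Next, I would fix a single cutoff $\eta \in \xCn{\infty}_0(\Omega_2)$ with $0 \leq \eta \leq 1$ and $\eta \equiv 1$ on some open set containing $\overline{\Omega}$, whose existence is guaranteed by the strict inclusion $\overline{\Omega} \subseteq \Omega_2$. Define
\[
\pi_i(f)(\mathbf{x}) := \eta(\mathbf{x}) \, E(f)(\mathbf{x}) \quad \text{for } \mathbf{x} \in \Omega_2, \qquad \pi_i(f)(\mathbf{x}) := 0 \quad \text{for } \mathbf{x} \in \overline{\Omega}_3 \setminus \Omega_2.
\]
By construction $\pi_i$ is linear, $\pi_i(f)_{|_{\Omega}} = f$ since $\eta \equiv 1$ and $E(f)_{|_{\Omega}} = f$, and $\operatorname{supp}(\pi_i(f)) \subseteq \operatorname{supp}(\eta) \subseteq \Omega_2$.

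It then remains to verify the continuity bounds. Applying the Leibniz rule to $\eta \cdot E(f)$, the $\xCn{{m,\beta}}$ norm of the product is controlled by a finite sum of terms of the form $\|\partial^{\boldsymbol{\gamma}}\eta\|_{\xLinfty} \|\partial^{\boldsymbol{\delta}}E(f)\|_{0,\beta,\Omega_3}$ with $|\boldsymbol{\gamma}|+|\boldsymbol{\delta}| \leq m$, and since $\eta$ is fixed and $E$ is bounded in every Hölder norm, this yields $\|\pi_i(f)\|_{m,\beta,\Omega_3} \leq C_{m,\beta} \|f\|_{m,\beta,\Omega}$ with a constant independent of $f$. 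The $\xCn{0}$ bound is the special case $m = 0$, read off from the supremum norm directly.

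The only delicate point concerns the corners of $\partial\Omega$, where naive reflection in one variable followed by reflection in the other must agree. This is automatic because the two reflections act in independent coordinates and commute on functions of product form, but more importantly the product structure of the rectangle means a single Seeley extension applied separately to each variable is unambiguous at the corners. Given that \cite[Lemma~6]{BoundaryControl_EulerBoussinesq_Cara_Santos_Souza} establishes the analogous statement in a comparable geometric setting, I expect no genuine obstacle here beyond adapting their construction to the present rectangle.
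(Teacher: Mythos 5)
The paper does not actually prove this lemma in-text: it simply cites \cite[Lemma~6]{BoundaryControl_EulerBoussinesq_Cara_Santos_Souza}, where the same extension operators are constructed for comparable geometry. Your approach---a universal linear extension operator across $\partial\Omega$ followed by multiplication with a fixed cutoff $\eta\in\xCn{\infty}_0(\Omega_2)$ equal to $1$ near $\overline\Omega$, then zero-extension to $\overline\Omega_3$---is the standard construction and is essentially what that cited lemma provides, so your argument matches the intended route rather than deviating from it. The verification that $\pi_i(f)|_\Omega=f$, that $\operatorname{supp}(\pi_i(f))\subseteq\Omega_2$, and the Leibniz estimate for $\|\eta\,E(f)\|_{m,\beta}$ are all correctly organized.

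The one point you should tighten is the aside about the ``even more elementary construction.'' A \emph{finite} Seeley-type reflection $\sum_k a_k f(\lambda_k\cdot)$ whose coefficients match derivatives only up to a fixed order $m$ produces an operator that \emph{depends on} $m$, so it cannot by itself furnish a single $\pi_i$ that is bounded on $\xCn{{m,\beta}}$ for \emph{every} $m\ge 0$, as the lemma requires. To get the order-uniform statement from reflections one needs Seeley's genuine infinite-series construction (with coefficients solving $\sum_k a_k\lambda_k^j=(-1)^j$ for all $j$), or else Stein's extension operator, which is what you correctly invoke in your main argument. Since you lead with Stein's theorem for a Lipschitz domain, the proof stands; just be aware that the ``finite'' variant is a different, weaker construction (sometimes called Hestenes or Babich extension) and does not yield the all-$m$ conclusion on its own. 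The remark about the corners is fine: the one-variable-at-a-time (tensor-product) extension avoids any ambiguity at the vertices of the rectangle precisely because the second extension acts on the strip already extended in the first variable.
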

	A detailed proof for the following result can be found in \cite[Lemma 7]{BoundaryControl_EulerBoussinesq_Cara_Santos_Souza}.
	\begin{lmm}\label{lemma:nu}
		For $\mathbf{z} \in \xCn{0}(\overline{\Omega} \times [0,1]; \mathbb{R}^2)$ denote by $\boldsymbol{\mathfrak{z}}$ the extension $\boldsymbol{\mathfrak{z}} := \mathbf{y}^* + \pi_2(\mathbf{z} - \overline{\mathbf{y}})$ and by $\boldsymbol{\mathcal{Z}}(\mathbf{x},s,t)$ for $(\mathbf{x},s,t) \in \overline{\Omega}_3\times[0,1]\times[0,1]$ the corresponding flow, which solves
		\[
		\begin{cases}
			\xdrv{}{t} \boldsymbol{\mathcal{Z}}(\mathbf{x},s,t) = \boldsymbol{\mathfrak{z}}(\boldsymbol{\mathcal{Z}}(\mathbf{x},s,t),t),\\
			\boldsymbol{\mathcal{Z}}(\mathbf{x},s,s) = \mathbf{x}.
		\end{cases}
		\]	
		Then, there exists a small $\nu > 0$, such that $\|\mathbf{z} - \overline{\mathbf{y}}\|_{\xCn{0}(\overline{\Omega} \times [0,1])} < \nu$ implies $\boldsymbol{\mathcal{Z}}(\mathbf{x}, 0, 1) \notin \overline{\Omega}_2$ for all $\mathbf{x} \in \overline{\Omega}_2$.
	\end{lmm}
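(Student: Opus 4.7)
The plan is to treat $\boldsymbol{\mathcal{Z}}$ as a small perturbation of $\boldsymbol{\mathcal{Y}}$ and control the deviation by a Gr\"onwall argument, reducing the claim to Lemma~\Rref{lemma:Y}.

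First I would bound $\boldsymbol{\mathfrak{z}} - \mathbf{y}^{*}$ on $\overline{\Omega}_3 \times [0,1]$. Since $\chi \equiv 1$ on $\overline{\Omega}_2 \supseteq \overline{\Omega}$, the reference trajectory satisfies $\overline{\mathbf{y}} = \mathbf{y}^{*}$ on $\overline{\Omega}\times[0,1]$, hence
\[
\boldsymbol{\mathfrak{z}} - \mathbf{y}^{*} = \pi_2(\mathbf{z} - \overline{\mathbf{y}}).
\]
By the continuity of the extension operator $\pi_2$ from Lemma~\Rref{lemma:extensionOperators}, there is a constant $C_{\pi}>0$ such that $\|\boldsymbol{\mathfrak{z}} - \mathbf{y}^{*}\|_{\xCn{0}(\overline{\Omega}_3 \times [0,1])} \leq C_{\pi}\, \|\mathbf{z} - \overline{\mathbf{y}}\|_{\xCn{0}(\overline{\Omega}\times [0,1])}$. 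In particular the right-hand side can be made arbitrarily small by shrinking $\nu$.

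Next, fix $M > 0$ as in Lemma~\Rref{lemma:Y}. Since $\overline{\Omega}_2$ is compact and $\mathbf{x}\mapsto \boldsymbol{\mathcal{Y}}(\mathbf{x},0,1)$ is continuous with image outside of $\overline{\Omega}_2$, we have
\[
d := \operatorname{dist}\bigl(\boldsymbol{\mathcal{Y}}(\overline{\Omega}_2, 0, 1),\, \overline{\Omega}_2\bigr) > 0.
\]
For any $\mathbf{x} \in \overline{\Omega}_2$, setting $\boldsymbol{\mathcal{D}}(t) := \boldsymbol{\mathcal{Z}}(\mathbf{x},0,t) - \boldsymbol{\mathcal{Y}}(\mathbf{x},0,t)$ and splitting
\[
\xdrv{}{t} \boldsymbol{\mathcal{D}}(t) = \bigl[\boldsymbol{\mathfrak{z}} - \mathbf{y}^{*}\bigr](\boldsymbol{\mathcal{Z}}(\mathbf{x},0,t),t) + \bigl[\mathbf{y}^{*}(\boldsymbol{\mathcal{Z}}(\mathbf{x},0,t),t) - \mathbf{y}^{*}(\boldsymbol{\mathcal{Y}}(\mathbf{x},0,t),t)\bigr],
\]
with $L_{y}$ denoting the (finite) Lipschitz constant of $\mathbf{y}^{*}$ in the spatial variable, I obtain $|\boldsymbol{\mathcal{D}}'(t)| \leq C_{\pi}\nu + L_{y}\,|\boldsymbol{\mathcal{D}}(t)|$. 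Gr\"onwall's inequality then yields $|\boldsymbol{\mathcal{D}}(1)| \leq C_{\pi}\nu \, e^{L_{y}}$, uniformly in $\mathbf{x} \in \overline{\Omega}_2$.

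Choosing $\nu < d\, e^{-L_{y}}/C_{\pi}$ guarantees $|\boldsymbol{\mathcal{D}}(1)| < d$, so that $\boldsymbol{\mathcal{Z}}(\mathbf{x},0,1)$ stays within distance $<d$ of $\boldsymbol{\mathcal{Y}}(\mathbf{x},0,1)$ and therefore remains outside $\overline{\Omega}_2$, as desired. The only mild subtlety — hardly an obstacle — is that both flows $\boldsymbol{\mathcal{Y}}$ and $\boldsymbol{\mathcal{Z}}$ must be globally well defined on the relevant set; this is ensured because $\mathbf{y}^{*}$ has compact support in $\overline{\Omega}_3$ (so it can be trivially extended by zero outside), and $\pi_2(\mathbf{z}-\overline{\mathbf{y}})$ is supported in $\Omega_2$ by Lemma~\Rref{lemma:extensionOperators}. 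The constant $\nu$ depends only on $L_y$, $C_\pi$, and the distance $d$ produced by Lemma~\Rref{lemma:Y}.
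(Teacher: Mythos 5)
The paper does not supply its own proof of Lemma~\Rref{lemma:nu}; it points to \cite[Lemma 7]{BoundaryControl_EulerBoussinesq_Cara_Santos_Souza}. Your argument is the standard one for this type of statement and is correct: on $\overline{\Omega}$ one has $\overline{\mathbf{y}} = \mathbf{y}^*$, so $\boldsymbol{\mathfrak{z}} - \mathbf{y}^* = \pi_2(\mathbf{z}-\overline{\mathbf{y}})$ is uniformly small by the continuity of $\pi_2$; both vector fields vanish near $\partial\Omega_3$ so the flows stay in $\overline{\Omega}_3$, where $\mathbf{y}^*$ is uniformly Lipschitz in space; the difference of flows therefore satisfies a Gr\"onwall estimate that keeps $\boldsymbol{\mathcal{Z}}(\mathbf{x},0,1)$ within distance $d := \operatorname{dist}(\boldsymbol{\mathcal{Y}}(\overline{\Omega}_2,0,1),\overline{\Omega}_2)>0$ of $\boldsymbol{\mathcal{Y}}(\mathbf{x},0,1)$, hence outside $\overline{\Omega}_2$. (A cosmetic remark: the sharp Gr\"onwall constant from $v'\leq C_\pi\nu + L_y v$, $v(0)=0$, is $C_\pi\nu (e^{L_y}-1)/L_y$, but since $(e^{x}-1)/x \leq e^{x}$ for $x>0$ your coarser bound $C_\pi\nu\, e^{L_y}$ is still valid, so this does not affect the conclusion.) In short, the proposal is correct and is essentially the same perturbation-plus-Gr\"onwall route as the reference the paper cites.
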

	
	The next lemma can be found in \cite[Lemma 1]{Bardos_EulerEq}, see also \cite[Lemmas 1-2]{BoundaryControl_EulerBoussinesq_Cara_Santos_Souza}, and will be employed mainly in form of Remark~\Rref{remark:Transport_Estimate} below.
	\begin{lmm}\label{lemma:Transport_Estimate}
		For $T > 0$, $m \in \mathbb{N}_0$ consider functions $\mathbf{z} \in \xCn{0}([0, T]; \xCn{{m, \alpha}}(\overline{\Omega}_3; \mathbb{R}^2))$, $v \in \xCn{0}([0, T]; \xCn{{m+1, \alpha}}(\overline{\Omega}_3))$ and $g \in \xCn{0}([0, T]; \xCn{{m, \alpha}}(\overline{\Omega}_3))$ with the properties
		\[
		\begin{cases}
			\mathbf{z} \cdot \mathbf{n} = 0, & \mbox{\normalfont on } \partial \Omega_3 \times [0,T],\\
			\partial_t v + (z \cdot \boldsymbol{\nabla}) v = g, & \mbox{\normalfont in } \Omega_3 \times (0, T).
		\end{cases}
		\]
		Then $\partial_t v \in \xCn{0}([0, T]; \xCn{{m, \alpha}}(\overline{\Omega}_3))$. Moreover, with a constant $K = K(\alpha, k) > 0$ and $\xdrv{}{t^+}$ denoting the right derivative in time, it holds for $t \in (0,T)$ that
		\[
		\xdrv{}{t^+} \|v(\cdot, t)\|_{m, \alpha,\Omega_3} \leq \begin{cases}
			\|g(\cdot, t)\|_{0, \alpha,\Omega_3} + \alpha \|\boldsymbol{\nabla} \mathbf{z}(\cdot, t)\|_{0, \alpha,\Omega_3}\|v(\cdot, t)\|_{0, \alpha,\Omega_3}, & m = 0, \\
			\|g(\cdot, t)\|_{m, \alpha,\Omega_3} + K(\alpha, k) \|\mathbf{z}(\cdot, t)\|_{m, \alpha,\Omega_3}\|v(\cdot, t)\|_{m, \alpha,\Omega_3}, & m \geq 1.
		\end{cases}
		\]
	\end{lmm}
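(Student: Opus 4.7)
The plan is to use the method of characteristics. Since $\mathbf{z}\cdot\mathbf{n}=0$ on $\partial\Omega_3$, the flow $\boldsymbol{\Phi}^{\mathbf{z}}(\mathbf{x},s,t)$ associated with $\mathbf{z}$ is globally defined on $\overline{\Omega}_3\times[0,T]\times[0,T]$ and leaves $\overline{\Omega}_3$ invariant. Integrating the transport equation along characteristics yields the Duhamel-type representation
\[
v(\boldsymbol{\Phi}^{\mathbf{z}}(\mathbf{x},s,t),t)=v(\mathbf{x},s)+\int_s^t g(\boldsymbol{\Phi}^{\mathbf{z}}(\mathbf{x},s,\tau),\tau)\xdif\tau,
\]
which underlies all estimates that follow.

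The first assertion, $\partial_t v\in\xCn{0}([0,T];\xCn{{m,\alpha}}(\overline{\Omega}_3))$, I would deduce directly from the equation $\partial_t v=g-(\mathbf{z}\cdot\boldsymbol{\nabla})v$: by hypothesis $g(\cdot,t)\in\xCn{{m,\alpha}}$, while $(\mathbf{z}\cdot\boldsymbol{\nabla})v(\cdot,t)$ lies in $\xCn{{m,\alpha}}$ as the product of two $\xCn{{m,\alpha}}$-functions, noting that $\boldsymbol{\nabla} v(\cdot,t)\in\xCn{{m,\alpha}}$ thanks to $v(\cdot,t)\in\xCn{{m+1,\alpha}}$. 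Time-continuity of $\partial_t v$ is inherited from the time-continuity hypotheses on $\mathbf{z}$, $\boldsymbol{\nabla} v$, and $g$.

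To obtain the differential inequality I would fix $t\in(0,T)$ and small $h>0$, and apply the representation above with $s=t$ to write
\[
v(\mathbf{y},t+h)=v(\boldsymbol{\Phi}^{\mathbf{z}}(\mathbf{y},t+h,t),t)+\int_t^{t+h}g(\boldsymbol{\Phi}^{\mathbf{z}}(\mathbf{y},t+h,\tau),\tau)\xdif\tau.
\]
The integral term contributes $\|g(\cdot,t)\|_{0,\alpha,\Omega_3}$ upon dividing by $h$ and letting $h\to0^+$. For $m=0$, the Hölder seminorm of the first term is bounded by $\|v(\cdot,t)\|_{0,\alpha,\Omega_3}$ times the $\alpha$-th power of the Lipschitz constant of $\mathbf{x}\mapsto\boldsymbol{\Phi}^{\mathbf{z}}(\mathbf{x},t+h,t)$, itself controlled via Gronwall applied to the flow equation by $\exp\bigl(\alpha\int_t^{t+h}\|\boldsymbol{\nabla}\mathbf{z}(\cdot,\tau)\|_{\xLinfty(\Omega_3)}\xdif\tau\bigr)$. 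Its right derivative at $h=0$ equals $\alpha\|\boldsymbol{\nabla}\mathbf{z}(\cdot,t)\|_{\xLinfty(\Omega_3)}\le\alpha\|\boldsymbol{\nabla}\mathbf{z}(\cdot,t)\|_{0,\alpha,\Omega_3}$, giving the $m=0$ bound. The $\xLinfty$-part of the norm for the same term is simply preserved, contributing $0$ to the derivative, which is why only the seminorm dictates the constant.

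For $m\ge1$, I would commute $\partial^{\boldsymbol{\beta}}$ with $|\boldsymbol{\beta}|\le m$ through the equation, obtaining transport equations for $\partial^{\boldsymbol{\beta}}v$ with right-hand side $\partial^{\boldsymbol{\beta}}g-[\partial^{\boldsymbol{\beta}},(\mathbf{z}\cdot\boldsymbol{\nabla})]v$; applying the $m=0$ step to each component and invoking Moser-type product estimates in Hölder spaces collects the commutator terms into $K(\alpha,k)\|\mathbf{z}(\cdot,t)\|_{m,\alpha,\Omega_3}\|v(\cdot,t)\|_{m,\alpha,\Omega_3}$. The main delicate point is the sharp factor $\alpha$ in the $m=0$ estimate: it arises from differentiating $|\mathbf{x}-\mathbf{y}|^{\alpha}$ through the exponent rather than through the base, which is the technical heart of the Bardos argument in \cite{Bardos_EulerEq}; rather than reproduce that book-keeping, I would invoke \emph{loc.\ cit.}\ for the precise constants.
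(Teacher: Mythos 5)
The paper does not prove this lemma: it simply cites \cite[Lemma~1]{Bardos_EulerEq} and \cite[Lemmas~1--2]{BoundaryControl_EulerBoussinesq_Cara_Santos_Souza}. Your characteristics-based sketch reproduces exactly the Bardos argument underlying those references, and it is correct in all essentials: the Duhamel representation along the backward flow, the fact that the $\xLinfty$-part of the norm is conserved by the (measure-preserving or not, but surjective) diffeomorphism of $\overline{\Omega}_3$ so only the seminorm contributes a constant, the Gronwall bound on $\operatorname{Lip}\bigl(\boldsymbol{\Phi}^{\mathbf{z}}(\cdot,t+h,t)\bigr)$, the right derivative $\alpha\|\boldsymbol{\nabla}\mathbf{z}(\cdot,t)\|_{\xLinfty(\Omega_3)}$ of the exponential, and the commutator reduction to $m=0$ for higher order. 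Your remark that the factor $\alpha$ comes from differentiating through the exponent (not the base) is precisely the technical heart of the Bardos estimate, and your bound $\alpha\|\boldsymbol{\nabla}\mathbf{z}\|_{\xLinfty}\le\alpha\|\boldsymbol{\nabla}\mathbf{z}\|_{0,\alpha}$ correctly yields the stated, slightly weaker, form. The only caveat worth flagging is that for $m=0$ the estimate involves $\|\boldsymbol{\nabla}\mathbf{z}\|_{0,\alpha}$ and your flow argument needs $\mathbf{z}$ to be Lipschitz in space, neither of which follows from the literal hypothesis $\mathbf{z}\in\xCn{0}([0,T];\xCn{{0,\alpha}})$; this is an imprecision in the lemma's statement as quoted, not a gap in your argument, and in the paper the lemma is only invoked with $m\ge 1$ or via Remark~\Rref{remark:Transport_Estimate} where $\mathbf{z}$ is in fact smoother.
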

	\begin{rmrk}\label{remark:Transport_Estimate}
		In the context of Lemma~\Rref{lemma:Transport_Estimate}, for all $t \in (0, T)$ one has the following estimate:
		\[
		\|v(\cdot, t)\|_{m, \alpha,\Omega_3} \leq \begin{cases}
			\left( \int_0^t \|g(\cdot, s)\|_{0, \alpha,\Omega_3} \, ds + \|v(\cdot, 0)\|_{0,\alpha,\Omega_3} \right) \operatorname{e}^{\alpha \int_0^t \|\mathbf{z}(\cdot, s)\|_{1, \alpha,\Omega_3} \,ds}, & m = 0, \\
			\left( \int_0^t \|g(\cdot, s)\|_{m, \alpha,\Omega_3} \, ds + \|v(\cdot, 0)\|_{m,\alpha,\Omega_3} \right) \operatorname{e}^{ K \int_0^t \|\mathbf{z}(\cdot, s)\|_{m, \alpha,\Omega_3} \,ds}, & m \geq 1.
		\end{cases}
		\]
		Hereby, it is enough to assume that $v \in \xCn{0}([0, T]; \xCn{{m, \alpha}}(\overline{\Omega}_3))$ due to regularization.
	\end{rmrk}

	\section{Proofs of the main results}\label{section:newVars}
	In this section, Theorems~\Rref{theorem:main} and~\Rref{theorem:main1} are concluded based on the assumption that a local controllability result, namely Proposition~\Rref{proposition:smalldata} as stated below, is true. The verification of Proposition~\Rref{proposition:smalldata} is postponed at this point and shall be carried out later out in Section~\Rref{section:proof_local_result}.
	
	By choosing $T = 1$ and $\mathbf{z}_T^{\pm} = \mathbf{0}$ one obtains, through applying the $\operatorname{curl}$ operator in \eqref{equation:MHD_Elsaesser}, the control problem
	\begin{equation}\label{equation:MHD_curled_Elsaesser}
		\begin{cases}
			\partial_t j^+ + (\mathbf{z}^- \cdot \boldsymbol{\nabla}) j^+ = g^+, & \mbox{ in } \Omega \times (0, 1),\\
			\partial_t j^- + (\mathbf{z}^+ \cdot \boldsymbol{\nabla}) j^- = g^-, & \mbox{ in } \Omega \times (0, 1),\\
			\operatorname{curl}(\mathbf{z}^+) = j^+, \operatorname{curl}(\mathbf{z}^-) = j^- & \mbox{ in } \Omega \times (0, 1),\\
			\operatorname{div}(\mathbf{z}^+) = \operatorname{div}(\mathbf{z}^-) = 0,  & \mbox{ in } \Omega \times (0, 1),\\
			\mathbf{z}^+ \cdot \mathbf{n} =  \mathbf{z}^- \cdot \mathbf{n} = 0,  & \mbox{ on } (\Gamma\setminus\Gamma_0) \times (0, 1),\\
			\mathbf{z}^+(\cdot,0) = \mathbf{z}^+_0, \, \mathbf{z}^-(\cdot,0) = \mathbf{z}^-_0, & \mbox{ in } \Omega,\\
			\mathbf{z}^+(\cdot,1) = \mathbf{0}, \, \mathbf{z}^-(\cdot,1) = \mathbf{0}, & \mbox{ in } \Omega,\\
			j^+(\cdot, 0) = \operatorname{curl}(\mathbf{z}_0^+),\, j^-(\cdot, 0)  =  \operatorname{curl}(\mathbf{z}_0^-)  & \mbox{ in } \Omega,\\
			j^+(\cdot, 1) = 0,\, j^-(\cdot, 1)  =  0  & \mbox{ in } \Omega.\\
		\end{cases}
	\end{equation}
	Denoting the components of $\mathbf{z}^{\pm}$ as $\mathbf{z}^{\pm} = (z^{\pm}_1, z^{\pm}_2)'$, the coupling terms $g^{\pm}$ are given by
	\begin{equation}\label{equation:sourceterm_unstructured}
		\begin{aligned}
			g^{\pm} & := && \partial_2 z^{\mp}_1 \partial_1 z^{\pm}_1 + \partial_2 z^{\mp}_2 \partial_2 z^{\pm}_1  - \partial_1 z^{\mp}_1 \partial_1 z^{\pm}_2 - \partial_1 z^{\mp}_2 \partial_2 z^{\pm}_2.
		\end{aligned}
	\end{equation}
	Utilizing the assumption that both of $\mathbf{z}^+$ and $\mathbf{z}^-$ are divergence free, the functions $g^{\pm}$ may in all of $\overline{\Omega} \times [0,1]$ be represented in the following way:
	\begin{equation}\label{equation:sourceterm_structured}
		\begin{aligned}
			g^{\pm} & = && -(\partial_1z^{\mp}_1 - \partial_1z^{\pm}_1)(\partial_1z^{\pm}_2 + \partial_2z^{\pm}_1) \\
			& &&  -(\partial_1z^{\pm}_2 - \partial_1z^{\mp}_2)\partial_1z^{\pm}_1 - (\partial_2z^{\pm}_1 - \partial_2z^{\mp}_1)\partial_1z^{\pm}_1.
		\end{aligned}
	\end{equation}
	Since $\overline{\mathbf{y}} = (\overline{y}_1, \overline{y}_2)$ is constant in all of $\overline{\Omega}$ with respect to the spatial variables, we can rewrite \eqref{equation:sourceterm_structured} as
	\begin{equation}\label{equation:sourceterm_structured_y}
		\begin{aligned}
			g^{\pm} & = && -(\partial_1z^{\mp}_1 - \partial_1z^{\pm}_1)\partial_1(z^{\pm}_2 - \overline{y}_2) -(\partial_1z^{\mp}_1 - \partial_1z^{\pm}_1)\partial_2(z^{\pm}_1 - \overline{y}_1) \\
			& &&  -(\partial_1z^{\pm}_2 - \partial_1z^{\mp}_2)\partial_1(z^{\pm}_1 - \overline{y}_1) - (\partial_2z^{\pm}_1 - \partial_2z^{\mp}_1)\partial_1(z^{\pm}_1 - \overline{y}_1).
		\end{aligned}
	\end{equation}
	
	\begin{rmrk}
		Writing the coupling terms in the form \eqref{equation:sourceterm_structured_y} will allow us in Section~\Rref{section:proof_local_result} to employ the weight $\omega_{k}$ defined in \eqref{equation:weight}. Instead of introducing \eqref{equation:sourceterm_unstructured} and then transforming via \eqref{equation:sourceterm_structured} to \eqref{equation:sourceterm_structured_y} it would be shorter, and sufficient for the proofs later on, to write directly
		\begin{equation}\label{equation:termterm_g_pm}
			g^{\pm} = -\sum\limits_{i = 1}^2 \boldsymbol{\nabla} (z^-_i-\overline{y}_i) \wedge \partial_i (\mathbf{z}^+ - \overline{\mathbf{y}}).
		\end{equation}
		However, \eqref{equation:sourceterm_structured} is valid for more general domains where the return method trajectory would be less simple and displays a structure that might be useful for the general case. Since, comparing with \eqref{equation:termterm_g_pm}, the expression \eqref{equation:sourceterm_structured_y} is closer to \eqref{equation:sourceterm_structured}, we choose to employ \eqref{equation:sourceterm_structured_y} from now on.
	\end{rmrk}

	The following local null controllability result, established later in Section~\Rref{section:proof_local_result}, constitutes the main step for proving Theorems~\Rref{theorem:main} and ~\Rref{theorem:main1}.
	\begin{prpstn}\label{proposition:smalldata}
		Let $\tilde{m} \geq 3$ be fixed. There exists a constant $\tilde{s} > 0$, such that if the initial data $(\mathbf{z}^+_0, \mathbf{z}^-_0) \in \xCn{{\tilde{m},\alpha}}_{\sigma,\Gamma_0}(\overline{\Omega}; \mathbb{R}^2)^2$ satisfy the constraint $\max\{\|\mathbf{z}^+_0\|_{\tilde{m},\alpha,\Omega},\|\mathbf{z}^-_0\|_{\tilde{m},\alpha,\Omega}\} < \tilde{s}$, then the system \eqref{equation:MHD_curled_Elsaesser} admits a solution $(\mathbf{z}^+, \mathbf{z}^-, j^+, j^-)$ of regularity
		\begin{equation*}
			\begin{aligned}
				(\mathbf{z}^+, \mathbf{z}^-, j^+, j^-) & \in &&  \left[ \xCn{0}([0,1];\xCn{{1,\alpha}}(\overline{\Omega}; \mathbb{R}^2)) \cap \xLinfty([0,1];\xCn{{\tilde{m},\alpha}}(\overline{\Omega}; \mathbb{R}^2)) \right]^2 \\
				& && \times \left[ \xCn{0}([0,1];\xCn{{0,\alpha}}(\overline{\Omega})) \cap \xLinfty([0,1];\xCn{{\tilde{m}-1,\alpha}}(\overline{\Omega})) \right]^2,
			\end{aligned}
		\end{equation*}
		with $\mathbf{z}^+(\mathbf{x}, 1) = \mathbf{z}^-(\mathbf{x}, 1) = \mathbf{0}$ for all $\mathbf{x} \in \Omega$.
	\end{prpstn}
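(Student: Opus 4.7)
The plan is to establish Proposition~\Rref{proposition:smalldata} by a Banach fixed-point argument carried out on a closed ball centered at the return-method profile $\overline{\mathbf{y}}$, inside a weighted functional space built around the weight $\omega_k$ anticipated in Section~\Rref{section:proof_local_result}. Given a candidate pair $(\widetilde{\mathbf{z}}^+,\widetilde{\mathbf{z}}^-)$ close to $(\overline{\mathbf{y}},\overline{\mathbf{y}})$, I would produce an iterate $(\mathbf{z}^+,\mathbf{z}^-,j^+,j^-)$ by combining a linear transport step for the vorticities with a div-curl reconstruction, and the endpoint requirement $\mathbf{z}^{\pm}(\cdot,1)=\mathbf{0}$ on $\Omega$ would be produced by exploiting, through Lemmas~\Rref{lemma:Y} and~\Rref{lemma:nu}, that the associated flows push every point of $\overline{\Omega}_2$ out of $\overline{\Omega}_2$ before $t=1$.

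Concretely, I would first extend $\widetilde{\mathbf{z}}^{\pm}-\overline{\mathbf{y}}$ to $\Omega_3$ through the operator $\pi_2$ of Lemma~\Rref{lemma:extensionOperators}, setting $\boldsymbol{\mathfrak{z}}^{\pm}:=\mathbf{y}^{*}+\pi_2(\widetilde{\mathbf{z}}^{\pm}-\overline{\mathbf{y}})$, which agrees with $\mathbf{y}^{*}$ outside $\Omega_2$; similarly, the initial vorticities $\operatorname{curl}(\mathbf{z}_0^{\pm})$ are extended to functions $J_0^{\pm}\in\xCn{{\tilde{m}-1,\alpha}}(\overline{\Omega}_3)$ compactly supported in $\Omega_2$. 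On $\Omega_3$ I would then solve the linear transport problems
\begin{equation*}
\partial_t J^{\pm} + (\boldsymbol{\mathfrak{z}}^{\mp}\cdot\boldsymbol{\nabla})J^{\pm} = G^{\pm},\qquad J^{\pm}(\cdot,0)=J_0^{\pm},
\end{equation*}
where $G^{\pm}$ is obtained by plugging the extensions of $\widetilde{\mathbf{z}}^{\pm}$ into the right-hand side of \eqref{equation:sourceterm_structured_y}. Restricting $j^{\pm}:=J^{\pm}|_{\overline{\Omega}}$ and then recovering $\mathbf{z}^{\pm}$ on $\Omega$ from the div-curl system $\operatorname{div}(\mathbf{z}^{\pm})=0$, $\operatorname{curl}(\mathbf{z}^{\pm})=j^{\pm}$, $\mathbf{z}^{\pm}\cdot\mathbf{n}=0$ on $\Gamma\setminus\Gamma_0$, with the tangential trace on $\Gamma_0$ serving as an implicit boundary control, defines the fixed-point map.

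Regularity and contraction estimates for this map would then follow from Remark~\Rref{remark:Transport_Estimate}, which controls $\|J^{\pm}(\cdot,t)\|_{\tilde{m}-1,\alpha,\Omega_3}$ by $\int_0^t\|G^{\pm}(\cdot,s)\|_{\tilde{m}-1,\alpha,\Omega_3}\,ds$ plus the initial data, times an exponential of the $\xCn{{\tilde{m}-1,\alpha}}$ norm of $\boldsymbol{\mathfrak{z}}^{\mp}$, complemented by elliptic estimates for the div-curl step that recover $\xCn{{\tilde{m},\alpha}}(\overline{\Omega};\mathbb{R}^2)$-regularity for $\mathbf{z}^{\pm}$. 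The main obstacle is the nonlinear coupling: one has to force a single fixed point of this map to satisfy both the transport equations and the terminal condition $\mathbf{z}^{\pm}(\cdot,1)=\mathbf{0}$ in $\Omega$, while the source $G^{\pm}$ is quadratic in $\widetilde{\mathbf{z}}^{\pm}-\overline{\mathbf{y}}$ and has no a priori reason to vanish near $t=1$. This is precisely where the weight $\omega_k$ is essential: measuring $\widetilde{\mathbf{z}}^{\pm}-\overline{\mathbf{y}}$ in a norm weighted by $\omega_k(t)$, the bilinear structure \eqref{equation:sourceterm_structured_y} yields a bound $\|G^{\pm}(\cdot,s)\|\lesssim\omega_k(s)^{-2}\|\widetilde{\mathbf{z}}^{\pm}-\overline{\mathbf{y}}\|_{k}^{\,2}$, and propagating this through the transport estimate produces the multiplicative factor $\omega_k(t)\int_0^1\omega_k(s)^{-2}\,ds$, which, by an appropriate choice of $\omega_k$ as anticipated in the remarks of Section~1, can be made arbitrarily small by increasing $k$. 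This simultaneously closes the fixed-point contraction and, in combination with the flushing property of the return flow, secures the vanishing of $\mathbf{z}^{\pm}$ on $\Omega$ at $t=1$.
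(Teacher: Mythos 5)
Your plan correctly identifies the overall structure (extension to $\Omega_3$ via $\pi_2$, linear transport for the curled quantities with source $G^{\pm}$ coming from \eqref{equation:sourceterm_structured_y}, a div-curl reconstruction, and a weighted fixed-point argument with $\omega_k$), and your analysis of why the factor $\omega_k(t)\int_0^1 \omega_k(s)^{-2}\,ds$ is what makes the map a contraction is on target. However, there is a genuine gap in how you handle the terminal condition $\mathbf{z}^{\pm}(\cdot,1)=\mathbf{0}$.

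You propose to extend the initial vorticities to functions $J_0^{\pm}$ compactly supported in $\Omega_2$ and then argue that the flushing property (Lemma~\Rref{lemma:nu}) together with the smallness from the weight "secures the vanishing of $\mathbf{z}^{\pm}$ on $\Omega$ at $t=1$". This does not follow. Flushing only guarantees that the backward characteristics through any point of $\Omega_1$ at $t=1$ start at $t=0$ in $\Omega_3\setminus\overline{\Omega}_2$, where your $J_0^{\pm}$ vanishes; but integrating the transport equation along characteristics gives $j^{\pm}(\tilde{\mathbf{x}},1)=J_0^{\pm}(\mathbf{x}^{\mp})+\int_0^1 G^{\pm}(\boldsymbol{\mathcal{Z}}^{\mp}(\mathbf{x}^{\mp},0,s),s)\,ds$, and the accumulated source integral has no reason to vanish (the characteristics sweep through $\Omega_2$, where $G^{\pm}\neq 0$). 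Making this quantity small via $\omega_k$ is enough for the self-map and contraction estimates, but membership in the fixed-point set requires it to be \emph{exactly} zero. The paper's mechanism is precisely the missing ingredient: the extended initial data $j_0^{\pm}$ in \eqref{equation:constructionF:InitialDataExtension} is not just an extension of $\operatorname{curl}(\mathbf{z}_0^{\pm})$ but also contains the corrector $-\tilde{\chi}(\mathbf{x})\int_0^1 G^{\pm}(\boldsymbol{\mathcal{Z}}^{\mp}(\mathbf{x},0,\sigma),\sigma)\,d\sigma$, supported in $\tilde{\mathcal{O}}\subseteq \Omega_3\setminus\overline{\Omega}_2$ (hence outside $\Omega_2$, not inside it as in your proposal), which cancels the accumulated source term exactly, producing $j^{\pm}(\cdot,1)\equiv 0$ in $\Omega_1$ as in \eqref{equation:jpmarezero}. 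Estimating this corrector in turn requires controlling compositions with the flow, which is the point of Lemma~\Rref{lemma:composition_estimate}.

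A smaller issue: you pose the div-curl problem directly on $\Omega$ with the normal condition prescribed only on $\Gamma\setminus\Gamma_0$, which is underdetermined. The paper instead solves the reconstruction on the smooth extension $\Omega_1$ with a full normal boundary condition $\tilde{\mathbf{z}}^{\pm}\cdot\mathbf{n}=(\mathbf{y}^*+\lambda\pi_2(\mathbf{z}_0^{\pm}))\cdot\mathbf{n}$ on $\partial\Omega_1$, including a cutoff $\lambda$ so that the divergence and boundary trace both attain zero at $t=1$; without that cutoff term the reconstructed field would not match the original initial data $\mathbf{z}_0^{\pm}$ at $t=0$ either.
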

	By construction, there are $p^+, p^- \in \mathcal{D}'(\Omega\times(0,1))$ such that $(\mathbf{z}^+, \mathbf{z}^-)$ obtained in Proposition~\Rref{proposition:smalldata} together with $(p^+, p^-)$ satisfy \eqref{equation:MHD_Elsaesser} for the case of zero final data and the final time being fixed to $T = 1$.

	Assuming first that Proposition~\Rref{proposition:smalldata} is true, we show now how to deduce Theorems~\Rref{theorem:main} and ~\Rref{theorem:main1} with the help of a scaling and gluing argument, as for instance in \cite{BoundaryControl_PerfectFluid_2D_Coron_1,BoundaryControl_EulerBoussinesq_Cara_Santos_Souza,BoundaryControlEuler3D_Glass}.
	\begin{proof}[Proof of Theorems~\Rref{theorem:main} and ~\Rref{theorem:main1}] We only verify Theorem~\Rref{theorem:main1}, since the arguments for Theorem~\Rref{theorem:main} are along the same line. We note that if $(\mathbf{u}, \mathbf{H}, p, q)$ solve \eqref{equation:MHD}, then this is also true for $(\hat{\mathbf{u}}, \hat{\mathbf{H}}, \hat{p}, \hat{q})$ defined by
		\begin{equation}\label{equation:reverse}
			\begin{aligned}
				\hat{\mathbf{\mathbf{u}}}(\mathbf{x},t) & := && - \mathbf{\mathbf{u}}(\mathbf{x}, T-t),\\
				\hat{\mathbf{H}}(\mathbf{x},t) & := && - \mathbf{H}(\mathbf{x}, T-t),\\
				\hat{p}(\mathbf{x},t) & := && p(\mathbf{x}, T-t),\\
				\hat{q}(\mathbf{x},t) & := && q(\mathbf{x}, T-t).
			\end{aligned}
		\end{equation}
		Next, the time interval $[0, T]$ is split into two parts $[0, T/2]$,  $[T/2, 1]$ and a number $0 < \epsilon < T/2$ is chosen so small that the modified initial data $\tilde{\mathbf{u}}_0 := \epsilon \mathbf{u}_0$, $\tilde{\mathbf{H}}_0 := \epsilon \mathbf{H}_0$ and final data $\tilde{\mathbf{u}}_T := \epsilon \mathbf{u}_T$, $\tilde{\mathbf{H}}_T := \epsilon \mathbf{H}_T$ satisfy
		\begin{equation*}
			\begin{aligned}
				\max\left\{\|\tilde{\mathbf{u}}_0  + \sqrt{\mu} \tilde{\mathbf{H}}_0\|_{\tilde{m},\alpha,\Omega}, \, \|\tilde{\mathbf{u}}_0  - \sqrt{\mu} \tilde{\mathbf{H}}_0\|_{\tilde{m},\alpha,\Omega}, \, \|\tilde{\mathbf{u}}_T  + \sqrt{\mu} \tilde{\mathbf{H}}_T\|_{\tilde{m},\alpha,\Omega}, \, \|\tilde{\mathbf{u}}_T  - \sqrt{\mu} \tilde{\mathbf{H}}_T\|_{\tilde{m},\alpha,\Omega}\right\} < \tilde{s},
			\end{aligned}
		\end{equation*}
		where $\tilde{s} > 0$ is the constant introduced in Proposition~\Rref{proposition:smalldata}. This leads, by applying Proposition~\Rref{proposition:smalldata} with $T = 1$ to solutions $(\mathbf{u}^{*}, \mathbf{H}^{*}, p^{*}, q^{*})$ and $(\mathbf{u}^{**}, \mathbf{H}^{**}, p^{**}, q^{**})$ to \eqref{equation:MHD}, obeying
		\[
		\begin{cases}
			(\mathbf{u}^{*}, \mathbf{H}^{*})(\cdot, 0) & = (\mathbf{u}_0(\cdot), \mathbf{H}_0(\cdot)),\\
			(\mathbf{u}^{*}, \mathbf{H}^{*}, p^{*}, q^{*})(\cdot, 1) & \equiv (\mathbf{0},\mathbf{0},0,0),\\
			(\mathbf{u}^{**}, \mathbf{H}^{**})(\cdot, 0) & = -(\mathbf{u}_T(\cdot), H_T(\cdot)),\\
			(\mathbf{u}^{**}, \mathbf{H}^{**}, p^{**}, q^{**})(\cdot, 1) & \equiv (\mathbf{0},\mathbf{0},0,0).
		\end{cases}
		\]
		Here, the functions $p^{*}(\cdot, 1), q^{*}(\cdot, 1), p^{**}(\cdot, 1)$ and $q^{**}(\cdot, 1)$ might be nonzero constants at first, but we modify them to be zero as well.
		In order to utilize the reversibility in time, which was outlined in \eqref{equation:reverse} above, we define
		\begin{equation*}
			\begin{cases}
				\left(\mathbf{u}^{a}, \mathbf{H}^{a}, p^{a}, q^{a}\right) (\mathbf{x},t) := \left(\epsilon^{-1}\mathbf{u}^{*}, \epsilon^{-1}\mathbf{H}^{*}, \epsilon^{-2}p^{*}, \epsilon^{-2}q^{*} \right) (\mathbf{x}, \epsilon^{-1}t), & (\mathbf{x},t) \in \Omega \times [0, \epsilon],\\
				\left(\mathbf{u}^{a}, \mathbf{H}^{a}, p^{a}, q^{a}\right) (\mathbf{x},t) := \left(\mathbf{0}, \mathbf{0}, 0, 0 \right), & (\mathbf{x},t) \in \Omega \times [\epsilon, T/2],
			\end{cases}
		\end{equation*}
		as well as
		\begin{equation*}
			\begin{cases}
				\left(\mathbf{u}^{b}, \mathbf{H}^{b}\right) (\mathbf{x},t) := -\left(\epsilon^{-1}\mathbf{u}^{**}, \epsilon^{-1}\mathbf{H}^{**}\right)(\mathbf{x}, \epsilon^{-1}(T-t)), & (\mathbf{x},t) \in \Omega \times [T-\epsilon, T],\\
				\left(p^{b}, q^{b}\right) (\mathbf{x},t) := \left(\epsilon^{-2}p^{**}, \epsilon^{-2}q^{**}\right)(\mathbf{x}, \epsilon^{-1}(T-t)), & (\mathbf{x},t) \in \Omega \times [T-\epsilon, T],\\
				\left(\mathbf{u}^{b}, \mathbf{H}^{b}, p^{b}, q^{b}\right) (\mathbf{x},t) := \left( \mathbf{0}, \mathbf{0}, 0, 0 \right), & (\mathbf{x},t) \in \Omega \times [T/2, T-\epsilon],
			\end{cases}
		\end{equation*}
		and observe that the functions
		\begin{equation*}
			\begin{aligned}
				\mathbf{u}(\mathbf{x},t) & := && \begin{cases}
					\mathbf{u}^{a}(\mathbf{x},t), & (\mathbf{x},t) \in \Omega \times [0, T/2],\\
					\mathbf{u}^{b}(\mathbf{x},t), & (\mathbf{x},t) \in \Omega \times [T/2, T],
				\end{cases}  \\
				\mathbf{H}(\mathbf{x},t) & := && \begin{cases}
					\mathbf{H}^{a}(\mathbf{x},t), & (\mathbf{x},t) \in \Omega \times [0, T/2],\\
					\mathbf{H}^{b}(\mathbf{x},t), & (\mathbf{x},t) \in \Omega \times [T/2, T],
				\end{cases}  \\
				p(\mathbf{x},t) & := && \begin{cases}
					p^{a}(\mathbf{x},t), & (\mathbf{x},t) \in \Omega \times [0, T/2],\\
					p^{b}(\mathbf{x},t), & (\mathbf{x},t) \in \Omega \times [T/2, T],
				\end{cases} \\
				q(\mathbf{x},t) & := && \begin{cases}
					q^{a}(\mathbf{x},t), & (\mathbf{x},t) \in \Omega \times [0, T/2],\\
					q^{b}(\mathbf{x},t), & (\mathbf{x},t) \in \Omega \times [T/2, T],
				\end{cases}
			\end{aligned}
		\end{equation*}	
		solve the control problem \eqref{equation:MHD} - \eqref{equation:MHD-Endcondition}.
	\end{proof}
	
	\section{Proof of the local null controllability result}\label{section:proof_local_result}
	This section is devoted to proving Proposition~\Rref{proposition:smalldata}. The first step is to define a certain map $F$ on a suitable subset of a Banach space such that a fixed point of this map will be a solution to \eqref{equation:MHD_curled_Elsaesser}. The next task is then to assure that this map is a well defined self map and contractive. The idea for using this kind of fixed point argument is similar to \cite{BoundaryControl_PerfectFluid_2D_Coron_1,BoundaryControlEuler3D_Glass,BoundaryControl_EulerBoussinesq_Cara_Santos_Souza}, however most parts are carried out in a different way due to the peculiarities of ideal magnetohydrodynamics.
	
	\subsection{Definition of a fixed point space}
	In what follows, $\lambda \in \xCn{\infty}([0,1]; [0,1])$ is such that $\lambda(t) = 1$ for $t \in [0, d]$ and $\lambda(t) = 0$ for $t \in [2d, 1]$ with arbitrary but fixed $d \in (0,1/2)$. Let then the two spaces $X^+$ and $X^-$ be given by
	\begin{equation*}
		\begin{aligned}
			X^{\pm}  :=  && \left\{ \mathbf{z} \in \xCn{0}([0,1];\xCn{{\tilde{m},\alpha}}(\overline{\Omega}; \mathbb{R}^2)) \, \Big| \, \operatorname{div}(\mathbf{z}) = 0 \mbox{ in } \Omega\times (0,1), \mathbf{z}(\cdot,0) = \mathbf{z}^{\pm}_0 \mbox{ in } \Omega, \mathbf{z} \cdot \mathbf{n} = 0\mbox{ on } \Gamma\setminus\Gamma_0 \times (0,1)  \right\},
		\end{aligned}
	\end{equation*}
	and define for $k > 0$ large, which will be fixed later, the weight function
	\begin{equation}\label{equation:weight}
		\omega_k(t) := \left( \frac{1}{2} + \frac{t}{8} \right)^{-k}, \quad (t \in [0,1]).
	\end{equation}
	
	Next, we introduce a set $X_{\nu, k}$ on which we shall construct a fixed point iteration below,
	\begin{equation*}
		\begin{aligned}
			X_{\nu, k} := \left\{ (\mathbf{z}^+, \mathbf{z}^-) \in X^+ \times X^- \, \Big| \, \max\limits_{t \in [0,1]} \omega_k(t) \|\mathbf{z}^{\pm} - \overline{\mathbf{y}}\|_{\tilde{m},\alpha,\Omega}(t) < \nu \mbox{ and } \mathbf{z}^{\pm}(\cdot,1) \equiv 0 \mbox{ in } \Omega \right\}.
		\end{aligned}
	\end{equation*}
	Above, $\nu > 0$ is the small constant from Lemma~\Rref{lemma:nu} and $X_{\nu, k}$ is not empty as long as $\|\mathbf{z}^+_0\|_{\tilde{m}, \alpha,\Omega}$ and $\|\mathbf{z}^-_0\|_{\tilde{m}, \alpha,\Omega}$ are, depending on $k$, chosen to be small enough. Indeed, if this is the case, one has
	\[
	(\overline{\mathbf{y}} + \lambda \mathbf{z}^+_0, \overline{\mathbf{y}} + \lambda \mathbf{z}^-_0) \in X_{\nu, k}.
	\]
	The definition of $X_{\nu, k}$ above is motivated mainly by the following requirements:
	\begin{itemize}
		\item In view of the return method trajectory $(\overline{\mathbf{y}}, \overline{\mathbf{H}} \equiv \mathbf{0}, \overline{p}, \overline{q} \equiv 0)$ one should have for each $(\mathbf{z}^+, \mathbf{z}^-) \in  X_{\nu, k}$ an estimate of the form
		\[
		\max\limits_{t \in [0,1]} \omega_k(t) \|\mathbf{z}^+ - \mathbf{z}^-\|_{\tilde{m},\alpha,\Omega}(t) \leq \max\limits_{t \in [0,1]} \omega_k(t) \|\mathbf{z}^+ - \overline{\mathbf{y}} + \overline{\mathbf{y}} - \mathbf{z}^-\|_{\tilde{m},\alpha,\Omega}(t) < 2\nu.
		\]
		\item Elements of $X_{\nu, k}$ should be near enough to $\overline{\mathbf{y}}$ such that Lemma~\Rref{lemma:nu} can be applied.
		\item The weight should guarantee that for the map $F$ defined below a fixed point can be found, but it has to be compatible with the previous requirement and satisfy for instance $\omega_k(t) \geq 1$ for all $t \in [0,1]$. In particular, $\omega_k$ as defined above obeys $1 < \omega_k(t) < +\infty$ for each choice of $t \in [0,1], k > 0$ and for every fixed $t \in [0,1]$ one has as $k \to +\infty$ that
		\begin{equation*}
			\begin{aligned}
				\omega_k(t) \int_0^t \frac{1}{\omega_k(s)} \, ds  & = && \frac{8}{k+1} \left[ \frac{\omega_k(t)}{\omega_{k+1}(t)} - \frac{\omega_k(t)}{\omega_{k+1}(0)} \right] \leq \frac{8}{k+1} \left( \frac{1}{2} + \frac{t}{8} \right)^{-k}  \left( \frac{1}{2} + \frac{t}{8} \right)^{k+1} \to 0.
			\end{aligned}
		\end{equation*}
		Moreover, for all $t \in [0,1]$ it holds that
		\begin{equation}\label{equation:weight_trick}
			\begin{aligned}
				\omega_k(t) \int_0^1 \frac{1}{\omega_k(s)^2} \, ds  & = && \omega_k(t)\frac{8}{2k+1} \left[ \frac{1}{\omega_{2k+1}(t)} - \frac{1}{\omega_{2k+1}(0)} \right] \\
				& \leq && \frac{8}{2k+1} \left( \frac{1}{2} + \frac{t}{8} \right)^{-k}  \left( \frac{1}{2} + \frac{1}{8} \right)^{2k}  \left( \frac{1}{2} + \frac{1}{8} \right)\\
				& \leq && \frac{5}{2k+1} 2^{k}  \left( \frac{25}{64} \right)^{k} \\
				& \leq && \frac{5}{2k+1} \to 0, \, \mbox{ as } k \to +\infty.
			\end{aligned}
		\end{equation}
		In order to make use of \eqref{equation:weight_trick} later on, the representation \eqref{equation:sourceterm_structured_y} for $g^{\pm}$ in $\overline{\Omega} \times [0,T]$ is essential.
		\item Elements of $X_{\nu, k}$ should be uniformly bounded in $\xCn{0}([0,1];\xCn{{\tilde{m},\alpha}}(\overline{\Omega}; \mathbb{R}^2))^2$ by some constant depending only on fixed objects like $\nu$ and $\overline{\mathbf{y}}$. This is satisfied since $\overline{\mathbf{y}}$ is fixed and for $(\mathbf{z}^+, \mathbf{z}^-) \in  X_{\nu, k}$ one has
		\begin{equation}\label{equation:unifbd}
			\begin{aligned}
				\max\limits_{t \in [0,1]} \|\mathbf{z}^{\pm}\|_{\tilde{m},\alpha,\Omega}(t) & \leq && \max\limits_{t \in [0,1]} \omega_k(t) \|\mathbf{z}^{\pm} - \overline{\mathbf{y}} \|_{\tilde{m},\alpha,\Omega}(t) + \max\limits_{t \in [0,1]} \|\overline{\mathbf{y}} \|_{\tilde{m},\alpha,\Omega}(t) \\
				& \leq && \nu + \max\limits_{t \in [0,1]} \| \overline{\mathbf{y}} \|_{\tilde{m},\alpha,\Omega}(t).
			\end{aligned}
		\end{equation}
	\end{itemize}

	\subsection{Construction of a fixed point map} \label{subsection:constructionF}
	On $X_{\nu, k}$ we define a map $F$ by assigning to a given pair $(\overline{\boldsymbol{\mathfrak{z}}^+}, \overline{\boldsymbol{\mathfrak{z}}^-}) \in X_{\nu, k}$ the value $F(\overline{\boldsymbol{\mathfrak{z}}^+}, \overline{\boldsymbol{\mathfrak{z}}^-})$ through the following steps.
	
	\begin{step}\label{step:1}
		Let $\boldsymbol{\mathfrak{z}}^{\pm} = (\mathfrak{z}^{\pm}_1,\mathfrak{z}^{\pm}_2)'$ denote the extensions of $\overline{\boldsymbol{\mathfrak{z}}^{\pm}}$ to $\Omega_3$, defined with the operators from Lemma~\Rref{lemma:extensionOperators} by
		\[
		\begin{cases}
			\boldsymbol{\mathfrak{z}}^+ = \mathbf{y}^* + \pi_2(\overline{\boldsymbol{\mathfrak{z}}^+} - \overline{\mathbf{y}}),\\
			\boldsymbol{\mathfrak{z}}^- = \mathbf{y}^* + \pi_2(\overline{\boldsymbol{\mathfrak{z}}^-} - \overline{\mathbf{y}}).
		\end{cases}
		\]
		In particular, this means that $\boldsymbol{\mathfrak{z}}^{\pm} = \mathbf{y}^*$ in $\Omega_3 \setminus \overline{\Omega}_2$ and $\boldsymbol{\mathfrak{z}}^{\pm} = \overline{\boldsymbol{\mathfrak{z}}^{\pm}}$ in $\Omega$. Moreover, for $(\mathbf{x},s,t) \in \overline{\Omega}_3\times[0,1]\times[0,1]$, the flow maps corresponding to $\boldsymbol{\mathfrak{z}}^{\pm}$ are denoted by $\boldsymbol{\mathcal{Z}}^{\pm}(\mathbf{x},s,t) = (\mathcal{Z}^{\pm}_1(\mathbf{x},s,t),\mathcal{Z}^{\pm}_2(\mathbf{x},s,t))'$. With the help of the Cauchy-Lipschitz Theorem, they are defined through the ordinary differential equations
		\begin{equation}\label{equation:constrFlowOde}
			\begin{cases}
				\xdrv{}{t} \boldsymbol{\mathcal{Z}}^{\pm}(\mathbf{x},s,t) = \boldsymbol{\mathfrak{z}}^{\pm}(\boldsymbol{\mathcal{Z}}^{\pm}(\mathbf{x},s,t),t),\\
				\boldsymbol{\mathcal{Z}}^{\pm}(\mathbf{x},s,s) = \mathbf{x}.
			\end{cases}
		\end{equation}
	\end{step}
	\begin{step}
		Corresponding to $g^{\pm}$ as represented in \eqref{equation:sourceterm_structured_y}, and for the purpose of linearizing \eqref{equation:MHD_curled_Elsaesser}, we denote the following extended versions of the coupling terms by
		\begin{equation}\label{equation:G+-}
			\begin{aligned}
				G^{\pm} & := && -(\partial_1\mathfrak{z}^{\mp}_1 - \partial_1\mathfrak{z}^{\pm}_1)\partial_1(\mathfrak{z}^{\pm}_2 - \mathbf{y}^*_2) -(\partial_1\mathfrak{z}^{\mp}_1 - \partial_1\mathfrak{z}^{\pm}_1)\partial_2(\mathfrak{z}^{\pm}_1 - \mathbf{y}^*_1) \\
				& &&  -(\partial_1\mathfrak{z}^{\pm}_2 - \partial_1\mathfrak{z}^{\mp}_2)\partial_1(\mathfrak{z}^{\pm}_1 - \mathbf{y}^*_1) - (\partial_2\mathfrak{z}^{\pm}_1 - \partial_2\mathfrak{z}^{\mp}_1)\partial_1(\mathfrak{z}^{\pm}_1 - \mathbf{y}^*_1).
			\end{aligned}
		\end{equation}
		By using the continuity of the extension operators from Lemma~\Rref{lemma:extensionOperators}, $G^{\pm}$ obey for all $t \in [0,1]$ the estimate
		\begin{equation}\label{equation:G+-est}
			\begin{aligned}
				\|G^{\pm}\|_{\tilde{m}-1,\alpha,\Omega_3} & \leq && C \|\boldsymbol{\mathfrak{z}}^+ - \boldsymbol{\mathfrak{z}}^-\|_{\tilde{m},\alpha,\Omega}(t) 	\left[\|\boldsymbol{\mathfrak{z}}^+ - \overline{\mathbf{y}}\|_{\tilde{m},\alpha,\Omega}(t) + \|\boldsymbol{\mathfrak{z}}^- - \overline{\mathbf{y}}\|_{\tilde{m},\alpha,\Omega}(t) \right] \\
				& \leq && C	\left[\|\boldsymbol{\mathfrak{z}}^+ - \overline{\mathbf{y}}\|_{\tilde{m},\alpha,\Omega}(t) + \|\boldsymbol{\mathfrak{z}}^- - \overline{\mathbf{y}}\|_{\tilde{m},\alpha,\Omega}(t) \right]^2.
			\end{aligned}
		\end{equation}
		The functions $G^{\pm}$ are related to the representation for $g^{\pm}$ in \eqref{equation:sourceterm_structured_y} only in $\overline{\Omega} \times [0,1]$, where the vector fields $\boldsymbol{\mathfrak{z}}^{\pm}$ are divergence free and the spatial derivatives of $\overline{\mathbf{y}}$ vanish. In $\left( \overline{\Omega_3}\setminus\Omega_2 \right) \times [0,1]$ one observes that $G^{\pm} = 0$, since $\boldsymbol{\mathfrak{z}}^{\pm} = \mathbf{y}^*$ holds there.
	\end{step}
	\begin{step}
		In this step suitable extensions $j^{\pm}_0$ for $\operatorname{curl}(\mathbf{z}^{\pm}_0)$ are introduced in order to guarantee that a fixed point of the map $F$ will vanish at the final time. Let $\tilde{\mathcal{O}} \subseteq \Omega_3$ be an open set with the following properties:
		\begin{enumerate}
			\item $\overline{\tilde{\mathcal{O}}} \subseteq \Omega_3 \setminus \overline{\Omega}_2$.
			\item There exist open $\mathcal{O}^+ \subseteq \tilde{\mathcal{O}}$ and  $\mathcal{O}^- \subseteq \tilde{\mathcal{O}}$ with $\boldsymbol{\mathcal{Z}}^{\pm}(\mathcal{O}^{\pm}, 0, 1) = \Omega_1$.
			\item The set $\tilde{\mathcal{O}}$ is independent of the above choice of $(\overline{\boldsymbol{\mathfrak{z}}^+}, \overline{\boldsymbol{\mathfrak{z}}^-}) \in X_{\nu, k}$, hence it is independent of the fixed point iteration later on.
			\item $\operatorname{dist}(\tilde{\mathcal{O}}, \overline{\Omega}_2 ) \geq c > 0$ for arbitrarily small but fixed  $c > 0$.
		\end{enumerate}
		The existence of a set $\tilde{\mathcal{O}}$ with the properties $(1)$ - $(4)$ is implied by the following points:
		\begin{itemize}
			\item The constant $\nu > 0$ that appears in the definition of $X_{\nu, k}$ is chosen sufficiently small such that Lemma~\Rref{lemma:nu} guarantees that
			\[
			\left[ \boldsymbol{\boldsymbol{\mathcal{Y}}}(\overline{\Omega}_2, 0, 1) \cup \boldsymbol{\mathcal{Z}}^{+}(\overline{\Omega}_2, 0, 1)  \cup \boldsymbol{\mathcal{Z}}^{-}(\overline{\Omega}_2, 0, 1) \right] \cap \overline{\Omega}_2 = \emptyset.
			\]
			\item If every particle that resides in $\overline{\Omega}_2$ at time $t = 0$ is transported by $\boldsymbol{\mathcal{Z}}^{\pm}$ to be at time $t=1$ at a point of $\Omega_3\setminus\overline{\Omega}_2$, then all particles which occupy $\Omega_1$ at time $t = 1$ must originate, due to the continuity of $\boldsymbol{\mathcal{Z}}^{\pm}$, in some open set contained in $\Omega_3\setminus\overline{\Omega}_2$.
			\item By the previous considerations and the fact that $\operatorname{dist}(\Omega_3\setminus\overline{\Omega}_2, \overline{\Omega}_1) > 0$ is a fixed quantity, one can choose $\tilde{\mathcal{O}} \subseteq \Omega_3\setminus\overline{\Omega}_2$ independently of $(\overline{\boldsymbol{\mathfrak{z}}^+}, \overline{\boldsymbol{\mathfrak{z}}^-}) \in X_{\nu, k}$.
		\end{itemize}
		Therefore, one may introduce independently of $(\overline{\boldsymbol{\mathfrak{z}}^+}, \overline{\boldsymbol{\mathfrak{z}}^-}) \in X_{\nu, k}$ a smooth cutoff function $\tilde{\chi} \in \xCn{\infty}_0(\overline{\Omega}_3)$ such that $\tilde{\chi} (\mathbf{x}) = 0$ when $\mathbf{x} \in \Omega_2$ and $\tilde{\chi}(\mathbf{x}) = 1$ for $\mathbf{x} \in \tilde{\mathcal{O}}$. The extended data $j^{\pm}_0$ is then for each $\mathbf{x} \in \overline{\Omega}_3$ defined by
		\begin{equation}\label{equation:constructionF:InitialDataExtension}
			j^{\pm}_0(\mathbf{x}) := \pi_1(\operatorname{curl}(\mathbf{z}^{\pm}_0))(\mathbf{x})
			- \tilde{\chi}(\mathbf{x}) \int_{0}^{1} G^{\pm}(\boldsymbol{\mathcal{Z}}^{\mp}(\mathbf{x},0,\sigma), \sigma) \, d\sigma.
		\end{equation}
		
		The proof for the next auxiliary lemma is postponed until the construction of $F$ is complete.
		\begin{lmm}\label{lemma:composition_estimate}
			For $\sigma \in [0,1]$ and a constant $C(\nu, \mathbf{y}^*) > 0$, which is independent of the choice of $(\overline{\boldsymbol{\mathfrak{z}}^+}, \overline{\boldsymbol{\mathfrak{z}}^-}) \in X_{\nu, k}$, the following estimate is valid:
			\begin{equation}\label{equation:boundcomp}
				\left\| G^{\pm}(\boldsymbol{\mathcal{Z}}^{\mp}(\cdot,0,\sigma), \sigma) \right\|_{\tilde{m}-1,\alpha,\Omega_3} \leq C(\nu, \mathbf{y}^*) \left\| G^{\pm}(\cdot,\sigma) \right\|_{\tilde{m}-1,\alpha,\Omega_3}.
			\end{equation}
		\end{lmm}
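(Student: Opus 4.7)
The plan is to regard $G^{\pm}(\boldsymbol{\mathcal{Z}}^{\mp}(\cdot, 0, \sigma), \sigma)$ as the composition of the frozen-time function $G^\pm(\cdot,\sigma) \in \xCn{{\tilde{m}-1,\alpha}}(\overline{\Omega}_3)$ with the flow map $\boldsymbol{\Phi}_\sigma := \boldsymbol{\mathcal{Z}}^\mp(\cdot,0,\sigma)$, which maps $\overline{\Omega}_3$ into itself since $\boldsymbol{\mathfrak{z}}^\mp$ vanishes in a neighborhood of $\partial \Omega_3$ (because both $\mathbf{y}^*$ and $\pi_2(\overline{\boldsymbol{\mathfrak{z}}^\mp} - \overline{\mathbf{y}})$ do). Then \eqref{equation:boundcomp} will follow from a standard Fa\`a di Bruno type composition estimate in H\"older spaces, \ie, an inequality of the form
\[
\|F \circ \boldsymbol{\Phi}\|_{\tilde{m}-1,\alpha,\Omega_3} \leq C_{\tilde{m},\alpha}\,\|F\|_{\tilde{m}-1,\alpha,\Omega_3}\bigl(1 + \|\boldsymbol{\Phi}\|_{\tilde{m}-1,\alpha,\Omega_3}\bigr)^{\tilde{m}-1},
\]
provided $\|\boldsymbol{\Phi}_\sigma\|_{\tilde{m}-1,\alpha,\Omega_3}$ admits a uniform bound by some $C(\nu,\mathbf{y}^*)$ independent of $(\overline{\boldsymbol{\mathfrak{z}}^+}, \overline{\boldsymbol{\mathfrak{z}}^-}) \in X_{\nu,k}$ and $\sigma \in [0,1]$.

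The first ingredient is a uniform estimate on $\boldsymbol{\mathfrak{z}}^\pm$ itself. From $\boldsymbol{\mathfrak{z}}^\pm = \mathbf{y}^* + \pi_2(\overline{\boldsymbol{\mathfrak{z}}^\pm}-\overline{\mathbf{y}})$, the continuity of $\pi_2$ stated in Lemma~\Rref{lemma:extensionOperators}, combined with the uniform bound \eqref{equation:unifbd} built into $X_{\nu,k}$, directly yields
\[
\sup_{t \in [0,1]}\|\boldsymbol{\mathfrak{z}}^\pm(\cdot,t)\|_{\tilde{m},\alpha,\Omega_3} \leq \|\mathbf{y}^*\|_{\xLinfty(0,1;\xCn{{\tilde{m},\alpha}}(\overline{\Omega}_3))} + C\,\nu \leq C(\nu,\mathbf{y}^*),
\]
with a constant that does not depend on the particular element of $X_{\nu,k}$ under consideration.

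To upgrade this into a uniform bound on the flow, I differentiate \eqref{equation:constrFlowOde} in the initial position $\mathbf{x}$, which produces a triangular system of linear ODEs for $D_{\mathbf{x}}^{k}\boldsymbol{\mathcal{Z}}^\mp(\cdot,0,\cdot)$, $k = 1,\ldots,\tilde{m}-1$, whose coefficients are given by $D_{\mathbf{x}}^{k}\boldsymbol{\mathfrak{z}}^\mp$ evaluated along the flow together with products of lower-order derivatives of the flow. Successive application of Gronwall's inequality, together with a standard H\"older continuity argument for characteristic flows as used in \cite{BoundaryControl_PerfectFluid_2D_Coron_1,BoundaryControlEuler3D_Glass,BoundaryControl_EulerBoussinesq_Cara_Santos_Souza}, then gives the required estimate $\sup_{\sigma \in [0,1]}\|\boldsymbol{\mathcal{Z}}^\mp(\cdot,0,\sigma)\|_{\tilde{m}-1,\alpha,\Omega_3} \leq C(\nu, \mathbf{y}^*)$.

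Plugging this into the composition inequality above delivers \eqref{equation:boundcomp}. The main technical nuisance is the composition estimate itself: expanding $D^{\boldsymbol{\beta}}(G^\pm \circ \boldsymbol{\Phi}_\sigma)$ by Fa\`a di Bruno for $|\boldsymbol{\beta}| \leq \tilde{m}-1$ produces mixed products of derivatives of $G^\pm$ and $\boldsymbol{\Phi}_\sigma$, whose $\xLinfty$-norms are readily controlled by the previous two bounds; the $\alpha$-H\"older seminorm of the top-order terms is then handled by the customary split between the case where $G^\pm$ contributes the H\"older difference (with $\boldsymbol{\Phi}_\sigma$ merely Lipschitz) and the case where $\boldsymbol{\Phi}_\sigma$ does (with $D^{\tilde{m}-1}G^\pm$ bounded), yielding the claimed inequality with a constant of the announced form.
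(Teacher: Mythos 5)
Your proposal is correct and follows essentially the same route as the paper: you expand the composite by Fa\`a di Bruno, bound the flow derivatives $\partial^{\boldsymbol{\beta}}\boldsymbol{\mathcal{Z}}^{\mp}$ uniformly in $X_{\nu,k}$ by differentiating \eqref{equation:constrFlowOde} and applying Gr\"onwall iteratively, and then combine these bounds with the H\"older seminorms of $\partial^{\boldsymbol{\gamma}}G^{\pm}$ and the Lipschitz control of the flow. The only cosmetic difference is that you package the final step as a black-box composition estimate in $\xCn{{\tilde{m}-1,\alpha}}$ (and the precise exponent on $1 + \|\boldsymbol{\Phi}\|_{\tilde{m}-1,\alpha}$ you write is slightly off — the standard form picks up an extra $\alpha$ in the power — but this is immaterial here), whereas the paper carries out the H\"older-seminorm estimates explicitly and, for bookkeeping convenience, obtains sup-norm bounds on flow derivatives up to order $\tilde{m}$ rather than $\xCn{{\tilde{m}-1,\alpha}}$ bounds; the two are interchangeable for this purpose since the admissible vector fields lie in $\xCn{{\tilde{m},\alpha}}$.
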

		Together with Lemma~\Rref{lemma:composition_estimate}, the extended data $j^{\pm}_0$ defined above in \eqref{equation:constructionF:InitialDataExtension} is seen to satisfy the following properties:
		\begin{itemize}
			\item $j^{\pm}_0 \in \xCn{{\tilde{m}-1,\alpha}}(\Omega_3)$, since
			\begin{equation*}
				\begin{aligned}
					\|j^{\pm}_0\|_{\tilde{m}-1,\alpha,\Omega_3} & \leq && C \int_{0}^{1} \left\| G^{\pm}(\boldsymbol{\mathcal{Z}}^{\mp}(\cdot,0,\sigma), \sigma) \right\|_{\tilde{m}-1,\alpha,\Omega_3} \, d\sigma + C\|\mathbf{z}^{\pm}_0\|_{\tilde{m},\alpha,\Omega}\\
					& \leq && C(\nu, y^*) \int_{0}^{1} \left\| G^{\pm}(\cdot, \sigma) \right\|_{\tilde{m}-1,\alpha,\Omega_3} \, d\sigma + C\|\mathbf{z}^{\pm}_0\|_{\tilde{m},\alpha,\Omega},
				\end{aligned}
			\end{equation*}
			
			where the constant $C > 0$ depends on $\pi_1$ and $\tilde{\chi}$, but not on the choice of $(\overline{\boldsymbol{\mathfrak{z}}^+}, \overline{\boldsymbol{\mathfrak{z}}^-}) \in X_{\nu, k}$.
			\item By the properties of $\pi_1$ from Lemma~\Rref{lemma:extensionOperators} and the cutoff function $\tilde{\chi}$, one has
			\[
			\|j^{\pm}_0\|_{\tilde{m}-1, \alpha, \Omega_2} \leq C \|\mathbf{z}^{\pm}_0\|_{\tilde{m}, \alpha, \Omega}.
			\]
		\end{itemize}
		While the generic constants $C > 0$ or $C(\nu,\mathbf{y}^*) > 0$ can depend on several fixed objects such as the extension operators $\pi_i$, $i\in \{1,2\}$ or $\tilde{m} \geq 3$, we mostly choose to only point out the $\nu$- and $\mathbf{y}^*$- dependence in order to indicate when the properties of $X_{\nu, k}$ are used for obtaining some estimates.
	\end{step}
	\begin{step}\label{step:4}
		Consider the following linear transport equations corresponding to \eqref{equation:MHD_curled_Elsaesser}:
		\begin{equation}\label{equation:constructionF:j+}
			\begin{cases}
				\partial_t j^{\pm} + (\boldsymbol{\mathfrak{z}}^{\mp} \cdot \boldsymbol{\nabla}) j^{\pm} =  G^{\pm}, & \mbox{ in } \Omega_3 \times (0, 1),\\
				j^{\pm}(\cdot, 0)  =  j^{\pm}_0(\cdot), & \mbox{ in } \Omega_3.
			\end{cases}
		\end{equation}
		They admit unique solutions $j^{\pm} \in \xCn{0}([0, 1]; \xCn{{\tilde{m}-1,\alpha}}(\overline{\Omega}_3))$, since $\boldsymbol{\mathfrak{z}}^{\pm}(\mathbf{x}) \cdot \mathbf{n}(\mathbf{x}) = 0$ for all $\mathbf{x} \in \partial\Omega_3$ such that in \eqref{equation:constructionF:j+} no boundary conditions are required.
		
		By the special choice of initial data in \eqref{equation:constructionF:j+}, the unwanted contributions from the source terms $G^{\pm}$ in \eqref{equation:constructionF:j+} are canceled at time $t = 1$. More precisely, for each $\tilde{\mathbf{x}} \in \Omega_1$, due to the definition of the sets $\mathcal{O}^{\pm}$, there exist unique elements $\mathbf{x}^{\pm} \in \mathcal{O}^{\pm}$ with $\boldsymbol{\mathcal{Z}}^{\pm}(\mathbf{x}^{\pm},0, 1) = \tilde{\mathbf{x}}$ and by using the flow maps $\boldsymbol{\mathcal{Z}}^{\pm}$ one has
		\begin{equation}\label{equation:jpmarezero}
			\begin{aligned}
				j^{\pm}(\tilde{\mathbf{x}},1) & = && j^{\pm}(\mathbf{x}^{\mp}, 0) + \int_0^{1} G^{\pm}(\boldsymbol{\mathcal{Z}}^{\mp}(\mathbf{x}^{\mp}, 0, s), s) \, ds \\
				& = && \int_0^{1} G^{\pm}(\boldsymbol{\mathcal{Z}}^{\mp}(\mathbf{x}^{\mp}, 0, s), s) \, ds  - \int_0^{1} G^{\pm}(\boldsymbol{\mathcal{Z}}^{\mp}(\mathbf{x}^{\mp}, 0, s), s) \, ds \\
				& = && 0.
			\end{aligned}
		\end{equation}
		Thus, at time $t=1$ the terminal condition $j^{+}(\mathbf{x},1) = j^{-}(\mathbf{x},1) = 0$ is satisfied for all $\mathbf{x} \in \Omega_1$.
	\end{step}
	\begin{step}\label{step:construct_z}
		Next, the functions $(\tilde{\mathbf{z}}^+, \tilde{\mathbf{z}}^-) \in \xCn{0}([0,1];\xCn{{\tilde{m},\alpha}}(\overline{\Omega}_1; \mathbb{R}^2))^2$ are obtained by solving the $\operatorname{div}$-$\operatorname{curl}$ systems
		\begin{equation}\label{equation:divcurl}
			\begin{aligned}
				\begin{cases}
					\operatorname{curl}(\tilde{\mathbf{z}}^{\pm}) = j^{\pm}, \,\, \operatorname{div}(\tilde{\mathbf{z}}^{\pm}) = \operatorname{div}\left(\lambda \pi_2(\mathbf{z}^{\pm}_0)\right), & \mbox{ in }  \Omega_1 \times [0, 1],\\
					\tilde{\mathbf{z}}^{\pm} \cdot \mathbf{n} = (\mathbf{y}^* + \lambda \pi_2(\mathbf{z}^{\pm}_0)) \cdot \mathbf{n}, & \mbox{ on }  \partial \Omega_1 \times [0,1].
				\end{cases}
			\end{aligned}
		\end{equation}
		Namely, $\tilde{\mathbf{z}}^{\pm}$ are given in $\Omega_1\times(0,1)$ via $\tilde{\mathbf{z}}^{\pm} = \boldsymbol{\nabla}^{\perp}\varphi^{\pm}  + \mathbf{y}^*  + \lambda \pi_2(\mathbf{z}^{\pm}_0)$ where $\varphi^{\pm}(\cdot, t)$ solve the elliptic equations
		\begin{equation}\label{equation:phi}
			\begin{aligned}
				\begin{cases}
					- \Delta \varphi^{\pm}(\cdot, t) = j^{\pm}(\cdot, t) - \lambda(t) \operatorname{curl} (\pi_2(\mathbf{z}^{\pm}_0))(\cdot), & \mbox{ in }  \Omega_1,\\
					\varphi^{\pm}(\cdot, t) = 0, & \mbox{ on }  \partial \Omega_1.
				\end{cases}
			\end{aligned}
		\end{equation}
		The extension $\Omega_1$ was introduced to state the above Poisson equations in a domain with smooth boundary.
	\end{step}
	\begin{step}
		Finally, we set $F(\overline{\boldsymbol{\mathfrak{z}}^+}, \overline{\boldsymbol{\mathfrak{z}}^-}) := (\mathbf{z}^+, \mathbf{z}^-)$, where $\mathbf{z}^+ := \tilde{\mathbf{z}}^+_{|_{\Omega}}$ and $\mathbf{z}^- := \tilde{\mathbf{z}}^-_{|_{\Omega}}$, with $\tilde{\mathbf{z}}^{\pm}$ being the functions constructed in the previous step. Due to the properties of $\mathbf{y}^*$ and $\lambda$, at $t = 1$ one has
		\[
		\begin{cases}
			(\mathbf{y}^*(\cdot, 1) + \lambda(1) \pi_2(\mathbf{z}^{\pm}_0)(\cdot)) \cdot \mathbf{n}(\cdot) = 0, & \mbox{ on } \partial \Omega_1,\\
			\operatorname{div}\left(\lambda(1) \pi_2(\mathbf{z}^{\pm}_0)(\cdot)\right) = 0, & \mbox{ in } \partial \Omega_1,
		\end{cases}
		\]
		and since $j^{\pm}(\cdot, 1) = 0$ in $\Omega_1$ by \eqref{equation:jpmarezero}, one obtains $\tilde{\mathbf{z}}^{\pm}(\cdot, 1) = \mathbf{0}$ in $\Omega_1$ from \eqref{equation:divcurl}, hence $\mathbf{z}^{\pm}(\cdot, 1) = \mathbf{0}$ in $\Omega$. Moreover, we note that $(\Gamma\setminus\Gamma_0) \subseteq \partial \Omega_1$ by definition, thus $\mathbf{z}^{\pm}$ satisfy the boundary conditions of the space $X_{\nu,k}$.
	\end{step}
	
	It remains for this section to show Lemma~\Rref{lemma:composition_estimate}.
	\begin{proof}[Proof of Lemma~\Rref{lemma:composition_estimate}]
		The objective is to verify \eqref{equation:boundcomp} in the context in which Lemma~\Rref{lemma:composition_estimate} is stated. Hereto, the task is to estimate for the composite function $x \mapsto G^{\pm}(\boldsymbol{\mathcal{Z}}^{\mp}(\mathbf{x},0,\sigma), \sigma)$,
		\begin{itemize}
			\item the $\xCn{0}$ norms of all spatial derivatives up to the order $\tilde{m}-1$,
			\item and the $\xCn{{0,\alpha}}$ - seminorm of the ($\tilde{m}-1)$-th order spatial derivatives,
		\end{itemize}
		by upper bounds of the form $C(\nu, \mathbf{y}^*) \left\| G^{\pm}(\cdot,\sigma) \right\|_{\tilde{m}-1,\alpha,\Omega_3}$. Hereto, with the help of a multidimensional Fa\`a di Bruno's Formula (or Arbogast rule), one may write
		\begin{equation}\label{equation:FaaVersion}
			\begin{aligned}
				\partial^{\boldsymbol{\kappa}} G^{\pm}(\boldsymbol{\mathcal{Z}}^{\mp}(\mathbf{x},0,\sigma), \sigma) = \sum_{\hat{\boldsymbol{\pi}} \in P_{\boldsymbol{\kappa}}}\frac{\boldsymbol{\kappa}!}{\hat{\pi}_1!\hat{\pi}_2!} \left[\partial_1^{|\hat{\pi}_1|}\partial_2^{|\hat{\pi}_2|}  G^{\pm} \right](\boldsymbol{\mathcal{Z}}^{\mp}(\mathbf{x},0,\sigma),\sigma) \prod_{l = 1}^{2}\prod_{\boldsymbol{\beta}} \left( \frac{1}{\boldsymbol{\beta}!} \left[ \partial^{\boldsymbol{\beta}}\boldsymbol{\mathcal{Z}}^{\mp}_l \right](\mathbf{x},0,\sigma) \right)^{\hat{\pi}_l(\boldsymbol{\beta})}.
			\end{aligned}
		\end{equation}
		We borrowed this formulation from \cite[Theorem 3.2]{Turcu}, where the notations are explained and proofs can be found. Briefly, $P_{\boldsymbol{\kappa}}$ contains all partitions $\hat{\boldsymbol{\pi}} \colon \mathbb{N}_0^2\setminus\{(0,0)\}\to \mathbb{N}_0^2$ of the multi-index $\boldsymbol{\kappa} \in \mathbb{N}_0^2\setminus\{(0,0)\}$ which satisfy $\sum_{l=1}^2\sum_{\boldsymbol{\beta}} \boldsymbol{\beta} \hat{\pi}_l(\boldsymbol{\beta}) = \boldsymbol{\kappa}$. Moreover, $\hat{\pi}_l! := \prod_{\boldsymbol{\beta}}\hat{\pi}_l(\boldsymbol{\beta})!$ and $|\hat{\pi}_l| := \sum_{\boldsymbol{\beta}}\hat{\pi}_l(\boldsymbol{\beta})$, $l = 1,2$, for  $\hat{\boldsymbol{\pi}} \in P_{\boldsymbol{\kappa}}$.
		
		The formula \eqref{equation:FaaVersion} implies that, aside of combinatorical quantities which can be moved into the constant $C(\nu, \mathbf{y}^*) > 0$, it is enough to estimate two types of terms:
		\begin{itemize}
			\item $E(\cdot,\sigma) := \left[ \partial^{\boldsymbol{\beta}}G^{\pm} \right](\boldsymbol{\mathcal{Z}}^{\mp}(\cdot,0,\sigma), \sigma)$ for all $\boldsymbol{\beta} \in \mathbb{N}_0^{2}$, $|\boldsymbol{\beta}| \leq \tilde{m}-1$.
			\item $\partial^{\boldsymbol{\beta}}\mathcal{Z}_l^{\mp}(\cdot,0,\sigma)$ for all $\boldsymbol{\beta} \in \mathbb{N}_0^{2}$, $|\boldsymbol{\beta}| \leq \tilde{m}-1$ and $l \in \{1,2\}$.
		\end{itemize}
		Hereby, the first kind of term shall always be bounded by $C(\nu, \mathbf{y}^*) \left\| G^{\pm}(\cdot,\sigma) \right\|_{\tilde{m}-1,\alpha,\Omega_3}$, while the second type of term needs to be fully included into $C(\nu, \mathbf{y}^*)$. The key for keeping the constant $C(\nu, \mathbf{y}^*)$ independent of the particular choice of $(\overline{\boldsymbol{\mathfrak{z}}^+}, \overline{\boldsymbol{\mathfrak{z}}^-}) \in X_{\nu, k}$ is that elements of $X_{\nu,k}$, as shown in \eqref{equation:unifbd}, allow a uniform bound depending only on fixed objects such as $\nu$ and $\mathbf{y}^*$.
		
		We begin with the first type of term and obtain for $\boldsymbol{\beta} \in \mathbb{N}_0^{2}$, $|\boldsymbol{\beta}| \leq \tilde{m}-1$, by taking into account that the flows $\boldsymbol{\mathcal{Z}}^{\mp}$ never leave $\Omega_3$, the estimate
		\begin{equation*}
			\begin{aligned}
				\left\|E(\cdot,\sigma)\right\|_{0,\alpha,\Omega_3} & = && \sup\limits_{\substack{\mathbf{x},\mathbf{y} \in \Omega_3 \\ \mathbf{x}\neq \mathbf{y}}} \Bigg\{  \frac{\Big| \left[ \partial^{\boldsymbol{\beta}}G^{\pm} \right](\boldsymbol{\mathcal{Z}}^{\mp}(\mathbf{x},0,\sigma), \sigma)-\left[ \partial^{\boldsymbol{\beta}}G^{\pm} \right](\boldsymbol{\mathcal{Z}}^{\mp}(\mathbf{y},0,\sigma), \sigma)\Big|}{\Big| \boldsymbol{\mathcal{Z}}^{\mp}(\mathbf{x},0,\sigma)-\boldsymbol{\mathcal{Z}}^{\mp}(\mathbf{y},0,\sigma) \Big|^{\alpha}} \frac{\Big| \boldsymbol{\mathcal{Z}}^{\mp}(\mathbf{x},0,\sigma)-\boldsymbol{\mathcal{Z}}^{\mp}(\mathbf{y},0,\sigma) \Big|^{\alpha}}{|\mathbf{x}-\mathbf{y}|^{\alpha}} \Bigg\}\\
				& && + \sup\limits_{\mathbf{x} \in \Omega_3} \left| \left[ \partial^{\boldsymbol{\beta}}G^{\pm} \right](\mathbf{x}, \sigma) \right| \\
				& \leq && \left\| G^{\pm}(\cdot,\sigma) \right\|_{\tilde{m}-1,\alpha,\Omega_3} \left(1 + \sup\limits_{\mathbf{x} \in \Omega_3} \left| \boldsymbol{\nabla} \boldsymbol{\mathcal{Z}}^{\mp}(\mathbf{x},0,\sigma) \right|^{\alpha} \right)\\
				& \leq && C(\nu, \mathbf{y}^*) \left\| G^{\pm}(\cdot,\sigma) \right\|_{\tilde{m}-1,\alpha,\Omega_3}.
			\end{aligned}
		\end{equation*}
		Above, the constant $C(\nu,\mathbf{y}^*) = C(\nu,\mathbf{y}^*, \alpha, \tilde{m}) > 0$ is obtained from the ordinary differential equations \eqref{equation:constrFlowOde} for $\boldsymbol{\mathcal{Z}}^{\mp}$, which lead for $i,l \in \{1,2\}$ to
		\begin{equation}\label{equation:chainruledZ+-Ode}
			\begin{aligned}
				\begin{cases}
					\xdrv{}{t} \partial_l \mathcal{Z}^{\mp}_i(\mathbf{x},0,t) & =  \boldsymbol{\nabla} \mathfrak{z}^{\pm}_i(\boldsymbol{\mathcal{Z}}^{\mp}(\mathbf{x},0,t),t) \cdot \partial_l \boldsymbol{\mathcal{Z}}^{\mp}(\mathbf{x},0,t), \\
					\partial_l \boldsymbol{\mathcal{Z}}^{\pm}(\mathbf{x},0,0) & = \mathbf{e}_l,
				\end{cases}
			\end{aligned}
		\end{equation}
		and which can in turn be integrated in time from $0$ to $\sigma$, such that by using the Gr\"onwall inequality it yields
		\begin{equation}\label{equation:flowgradest}
			\begin{aligned}
				\sup\limits_{\mathbf{x} \in \Omega_3} \Big|\boldsymbol{\nabla}\boldsymbol{\mathcal{Z}}^{\mp}(\mathbf{x},0,\sigma) \Big| & \leq && C\operatorname{exp}\left(\|\boldsymbol{\mathfrak{z}}^{\mp}\|_{1,\alpha,\Omega_3}(\sigma)\right)\\
				& \leq && C\operatorname{exp}\left(\|\boldsymbol{\mathfrak{z}}^{\mp} - \mathbf{y}^*\|_{1,\alpha,\Omega_3}(\sigma) + \|\mathbf{y}^*\|_{1,\alpha,\Omega_3}(\sigma)\right)\\
				& \leq && C \operatorname{exp}\left(\nu + \|\mathbf{y}^*\|_{1,\alpha,\Omega_3}(\sigma)\right).
			\end{aligned}
		\end{equation}
		
		It is left to move the second kind of term from \eqref{equation:FaaVersion} into a constant $C(\nu,\mathbf{y}^*) > 0$. Since $X_{\nu,k}$ provides a uniform bound with respect to the higher norm $\|\cdot\|_{\tilde{m},\alpha,\Omega}$, this can be done by obtaining for all $\tilde{\boldsymbol{\beta}} \in \mathbb{N}_0^{2}$, $|\tilde{\boldsymbol{\beta}}| \leq \tilde{m}$, $l \in \{1,2\}$, estimates of the form
		\begin{equation}\label{equation:oneboundtoshow}
			\sup\limits_{\mathbf{x} \in \Omega_3} \Big| \partial^{\tilde{\boldsymbol{\beta}}}\mathcal{Z}_l^{\mp}(\mathbf{x},0,\sigma) \Big| \leq C(\nu, \mathbf{y}^*).
		\end{equation}
		For this purpose, one can employ again the defining ordinary differential equations for $\boldsymbol{\mathcal{Z}}^{\mp}$, namely
		\begin{equation}\label{equation:deford}
			\begin{aligned}
				\xdrv{}{t}\partial^{\tilde{\boldsymbol{\beta}}}\boldsymbol{\mathcal{Z}}^{\mp}(\mathbf{x},0,\sigma) =  \partial^{\tilde{\boldsymbol{\beta}}} \left[ \boldsymbol{\mathfrak{z}}^{\mp}(\boldsymbol{\mathcal{Z}}^{\mp}(\cdot,0,\sigma),\sigma) \right](\mathbf{x}).
			\end{aligned}
		\end{equation}
		We shall do this inductively:
		\begin{enumerate}
			\item Assume that $|\tilde{\boldsymbol{\beta}}| \leq 1$ in \eqref{equation:oneboundtoshow}. By the Chain Rule, since $|\tilde{\boldsymbol{\beta}}| \leq 1$, \eqref{equation:deford} with initial conditions corresponds either to \eqref{equation:constrFlowOde} or \eqref{equation:chainruledZ+-Ode} and one has by integrating in time
			\begin{equation*}
				\begin{aligned}
					\sup\limits_{\mathbf{x} \in \Omega_3} \sum\limits_{|\hat{\boldsymbol{\beta}}|\leq 1, l = 1,2} \Big| \partial^{\hat{\boldsymbol{\beta}}}\mathcal{Z}_l^{\mp}(\mathbf{x},0,\sigma) \Big| \leq C(\Omega_3) + C(\nu,\mathbf{y}^*)\int_0^{\sigma} \sup\limits_{\mathbf{x} \in \Omega_3} \sum\limits_{|\hat{\boldsymbol{\beta}}|\leq 1, l = 1,2} \Big| \partial^{\hat{\boldsymbol{\beta}}}\mathcal{Z}_l^{\mp}(\mathbf{x},0,s) \Big|^{|\tilde{\boldsymbol{\beta}}|} \, ds,
				\end{aligned}
			\end{equation*}
			where $C(\nu,\mathbf{y}^*) > 0$ includes in particular a bound for all appearing factors of the form
			\[
			\tilde{E}(\cdot, \sigma) := 		\Big|\left[\partial^{\boldsymbol{\beta}^*}\mathfrak{z}_l^{\mp}\right](\boldsymbol{\mathcal{Z}}^{\mp}(\cdot,0,\sigma),\sigma)\Big|,
			\]
			with some $\boldsymbol{\beta}^* \in \mathbb{N}_0^{2}$, $|\boldsymbol{\beta}^*| \leq 1$, since the flows $\boldsymbol{\mathcal{Z}}^{\mp}$ never leave $\Omega_3$ and one can for all $\mathbf{x} \in \overline{\Omega}_3$ estimate
			\[
			\tilde{E}(\mathbf{x}, \sigma) \leq \sup\limits_{\mathbf{x} \in \Omega_3} 		\Big|\left[\partial^{\boldsymbol{\beta}^*}\mathfrak{z}_l^{\mp}\right](\mathbf{x},\sigma)\Big| \leq \nu + \|\mathbf{y}^*\|_{1,\alpha,\Omega_3}.
			\]
			\item Now assume, that for all $|\tilde{\boldsymbol{\beta}}| < \tilde{m}$ a bound as in \eqref{equation:oneboundtoshow} holds. Then, by integrating \eqref{equation:deford} in time and from Fa\`a di Bruno's Formula like in \eqref{equation:FaaVersion}, one has
			\begin{equation}\label{equation:inductionstep2}
				\begin{aligned}
					\sup\limits_{\mathbf{x} \in \Omega_3} \sum\limits_{|\boldsymbol{\kappa}| = \tilde{m}, l = 1,2} \Big| \partial^{\boldsymbol{\kappa}}\mathcal{Z}_l^{\mp}(\mathbf{x},0,\sigma) \Big| & \leq && \int_0^{\sigma} \left[\sup\limits_{\mathbf{x} \in \Omega_3}  \sum\limits_{|\boldsymbol{\kappa}| = \tilde{m}, l = 1,2} V(\mathbf{x},s,\boldsymbol{\kappa},l) \right]\, ds,
				\end{aligned}
			\end{equation}
			where
			\begin{equation*}
				\begin{aligned}
					V(\mathbf{x},s,\boldsymbol{\kappa},l) := \Big| \sum_{\hat{\boldsymbol{\pi}} \in P_{\boldsymbol{\kappa}}}\frac{\boldsymbol{\kappa}!}{\hat{\pi}_1!\hat{\pi}_2!} \left[\partial_1^{|\hat{\pi}_1|}\partial_2^{|\hat{\pi}_2|}  \mathfrak{z}^{\mp}_l \right](\boldsymbol{\mathcal{Z}}^{\mp}(\mathbf{x},0,s),s) \prod_{i = 1}^{2}\prod_{\boldsymbol{\beta}} \left( \frac{1}{\boldsymbol{\beta}!} \left[ \partial^{\boldsymbol{\beta}}\mathcal{Z}^{\mp}_i \right](\mathbf{x},0,s) \right)^{\hat{\pi}_i(\boldsymbol{\beta})} \Big|.
				\end{aligned}
			\end{equation*}
			In the above formula for $V(\mathbf{x},s,\boldsymbol{\kappa},l)$, if $|\boldsymbol{\beta}| = \tilde{m}$, then $\hat{\pi}_i(\boldsymbol{\beta}) = 1$ for $i = 1,2$. This fact comes from the requirement on the partitions $\hat{\boldsymbol{\pi}}$ to satisfy $\sum_{l=1}^2\sum_{\boldsymbol{\beta}} \boldsymbol{\beta} \hat{\pi}_l(\boldsymbol{\beta}) = \boldsymbol{\kappa}$. Therefore, all terms with exponent $\hat{\pi}_l(\boldsymbol{\beta}) > 1$ can be integrated into a constant $C(\nu, \mathbf{y}^*)$ by means of the induction assumption. Moreover, with the same explanation as in the first step of the induction argument, the terms of the form
			\[
			\left[\partial_1^{|\hat{\pi}_1|}\partial_2^{|\hat{\pi}_2|}  \mathfrak{z}^{\mp}_l 	\right](\boldsymbol{\mathcal{Z}}^{\mp}(\mathbf{x},0,s),s)
			\]
			can likewise be estimated by $C(\nu, \mathbf{y}^*)$. In conclusion, \eqref{equation:inductionstep2} simplifies to
			\begin{equation*}
				\begin{aligned}
					\sup\limits_{\mathbf{x} \in \Omega_3} \sum\limits_{|\boldsymbol{\kappa}| = \tilde{m}, l = 1,2} \Big| \partial^{\boldsymbol{\kappa}}\mathcal{Z}_l^{\mp}(\mathbf{x},0,\sigma) \Big| \leq C(\nu, \mathbf{y}^*) + C(\nu, \mathbf{y}^*)\int_0^{\sigma} \left[\sup\limits_{\mathbf{x} \in \Omega_3}  \sum\limits_{|\boldsymbol{\kappa}| = \tilde{m}, l = 1,2} \Big| \partial^{\boldsymbol{\kappa}}\mathcal{Z}_l^{\mp}(\mathbf{x},0,s) \Big| \right]\, ds,
				\end{aligned}
			\end{equation*}
			and using Gr\"onwall's inequality completes the estimate.
		\end{enumerate}
	\end{proof}
	
	\subsection{Existence of a fixed point}\label{subsection:existence}
	The goal of this section is to show that $F(X_{\nu, k}) \subseteq X_{\nu, k}$ and that $F$ is a contraction with respect to the topology of $\xCn{0}([0,1];\xCn{{1,\alpha}}(\overline{\Omega}; \mathbb{R}^2))^2$, which allows then to conclude Proposition~\Rref{proposition:smalldata}. The main steps hereto are stated in Lemma~\Rref{lemma:stepsForContraction} below.
	\begin{lmm}\label{lemma:stepsForContraction}
		There exists a possibly large $k = k(\nu, \mathbf{y}^*) > 0$ and a small number $\delta = \delta(\nu, k, \mathbf{y}^*) > 0$, depending on $\nu, k$ and $\mathbf{y}^*$, such that the following statements are true, provided that $\|\mathbf{z}^+_0\|_{\tilde{m},\alpha,\Omega} + \|\mathbf{z}^-_0\|_{\tilde{m},\alpha,\Omega} < \delta$:
		\begin{enumerate}
			\item $F(X_{\nu, k}) \subseteq X_{\nu, k}$.
			\item There exists a constant $\kappa \in (0,1)$ such that for all $(\overline{\boldsymbol{\mathfrak{z}}^{+,1}}, \overline{\boldsymbol{\mathfrak{z}}^{-,1}}), (\overline{\boldsymbol{\mathfrak{z}}^{+,2}}, \overline{\boldsymbol{\mathfrak{z}}^{-,2}}) \in X_{\nu, k}$ one has
			\[
			\|F(\overline{\boldsymbol{\mathfrak{z}}^{+,1}}, \overline{\boldsymbol{\mathfrak{z}}^{-,1}}) - F(\overline{\boldsymbol{\mathfrak{z}}^{+,2}}, \overline{\boldsymbol{\mathfrak{z}}^{-,2}}) \|_{X} \leq \kappa \|(\overline{\boldsymbol{\mathfrak{z}}^{+,1}}, \overline{\boldsymbol{\mathfrak{z}}^{-,1}}) - (\overline{\boldsymbol{\mathfrak{z}}^{+,2}}, \overline{\boldsymbol{\mathfrak{z}}^{-,2}})\|_{X},
			\]
			with the notation
			\[
			\|(\mathbf{f}, \mathbf{g})\|_{X} := \max_{t \in [0,1]} \|(\mathbf{f}, \mathbf{g})\|_{\xCn{{1,\alpha}}(\overline{\Omega}; \mathbb{R}^2)^2}(t) = \max_{t \in [0,1]} \left\{ \|\mathbf{f}\|_{1,\alpha,\Omega}(t) + \|\mathbf{g}\|_{1,\alpha,\Omega}(t) \right\}.
			\]
		\end{enumerate}
	\end{lmm}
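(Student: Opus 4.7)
The plan is to run a Banach fixed-point argument on $X_{\nu,k}$, extracting smallness from two independent sources: the quadratic structure of $G^\pm$ exhibited in \Rref{equation:G+-est}, combined with the weight identity \Rref{equation:weight_trick}, and the scale $\omega_k(0)=2^k$ which controls the contribution of the initial data. Concretely, for assertion $(1)$ I would chain the following estimates, valid for any $(\overline{\boldsymbol{\mathfrak{z}}^+},\overline{\boldsymbol{\mathfrak{z}}^-})\in X_{\nu,k}$: continuity of $\pi_2$ (Lemma \Rref{lemma:extensionOperators}) combined with the definition of $X_{\nu,k}$ gives $\|\boldsymbol{\mathfrak{z}}^\pm-\mathbf{y}^*\|_{\tilde{m},\alpha,\Omega_3}(t)\leq C\nu/\omega_k(t)$, hence \Rref{equation:G+-est} yields $\|G^\pm\|_{\tilde{m}-1,\alpha,\Omega_3}(t)\leq C\nu^2/\omega_k(t)^2$; Lemma \Rref{lemma:composition_estimate} then bounds the extended datum by $\|j_0^\pm\|_{\tilde{m}-1,\alpha,\Omega_3}\leq C\|\mathbf{z}_0^\pm\|_{\tilde{m},\alpha,\Omega}+C(\nu,\mathbf{y}^*)\int_0^1\|G^\pm(\cdot,\sigma)\|_{\tilde{m}-1,\alpha,\Omega_3}\,d\sigma$; Remark \Rref{remark:Transport_Estimate} applied to \Rref{equation:constructionF:j+} in $\Omega_3$ (no boundary data is needed since $\boldsymbol{\mathfrak{z}}^\mp\cdot\mathbf{n}=0$ on $\partial\Omega_3$), together with \Rref{equation:unifbd} to tame the Gronwall exponential, propagates $\|j^\pm(\cdot,t)\|_{\tilde{m}-1,\alpha,\Omega_3}$; and finally elliptic regularity for \Rref{equation:phi} plus the representation $\tilde{\mathbf{z}}^\pm=\boldsymbol{\nabla}^\perp\varphi^\pm+\mathbf{y}^*+\lambda\pi_2(\mathbf{z}_0^\pm)$ give $\|\mathbf{z}^\pm-\overline{\mathbf{y}}\|_{\tilde{m},\alpha,\Omega}(t)\leq C\|j^\pm(\cdot,t)\|_{\tilde{m}-1,\alpha,\Omega_3}+C\|\mathbf{z}_0^\pm\|_{\tilde{m},\alpha,\Omega}$. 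After multiplication by $\omega_k(t)$, the $G^\pm$-integrals contribute $C\nu^2\omega_k(t)\int_0^1\omega_k(s)^{-2}\,ds\leq 5C\nu^2/(2k+1)$, which is $<\nu/2$ for $k$ large, whereas the data piece is $\leq\omega_k(0)\,C\|\mathbf{z}_0^\pm\|_{\tilde{m},\alpha,\Omega}\leq 2^k C\delta<\nu/2$ once $\delta=\delta(k)$ is small. The terminal condition $\mathbf{z}^\pm(\cdot,1)=\mathbf{0}$ and the divergence/boundary constraints are built into Step \Rref{step:construct_z}, closing $(1)$.

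For assertion $(2)$, the idea is to propagate differences through the same chain at the weaker $\xCn{{1,\alpha}}$-regularity level, trading against the uniform $\xCn{{\tilde{m},\alpha}}$-bound provided by $X_{\nu,k}$. With $u_i:=(\overline{\boldsymbol{\mathfrak{z}}^{+,i}},\overline{\boldsymbol{\mathfrak{z}}^{-,i}})$, Gronwall applied to the difference of \Rref{equation:constrFlowOde} gives $\|\boldsymbol{\mathcal{Z}}^{\pm,1}-\boldsymbol{\mathcal{Z}}^{\pm,2}\|_{\xCn{{0,\alpha}}(\overline{\Omega}_3)}\leq C(\nu,\mathbf{y}^*)\|u_1-u_2\|_X$. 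Expanding $G^{\pm,1}-G^{\pm,2}$ as a sum of products in which one factor is a difference controlled by $\|u_1-u_2\|_X$ and the other has $\xCn{{1,\alpha}}$-size at most $C\nu/\omega_k(t)$ produces $\|G^{\pm,1}-G^{\pm,2}\|_{0,\alpha,\Omega_3}(t)\leq C(\nu/\omega_k(t))\|u_1-u_2\|_X$. Telescoping $G^{\pm,1}(\boldsymbol{\mathcal{Z}}^{\mp,1})-G^{\pm,2}(\boldsymbol{\mathcal{Z}}^{\mp,2})$ and integrating against $1/\omega_k^2$ then bounds the data differences by $C(\nu/(2k+1))\|u_1-u_2\|_X$. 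The difference $J^\pm:=j^{\pm,1}-j^{\pm,2}$ solves $\partial_t J^\pm+\boldsymbol{\mathfrak{z}}^{\mp,1}\cdot\boldsymbol{\nabla}J^\pm=(G^{\pm,1}-G^{\pm,2})-(\boldsymbol{\mathfrak{z}}^{\mp,1}-\boldsymbol{\mathfrak{z}}^{\mp,2})\cdot\boldsymbol{\nabla}j^{\pm,2}$; applying Remark \Rref{remark:Transport_Estimate} at level $m=0$, using the $\xCn{{\tilde{m}-1,\alpha}}$-bound on $j^{\pm,2}$ already supplied by $(1)$, yields $\max_t\|J^\pm\|_{0,\alpha,\Omega_3}(t)\leq o_k(1)\|u_1-u_2\|_X$. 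Elliptic regularity for the difference of \Rref{equation:phi} then delivers $\|F(u_1)-F(u_2)\|_X\leq\kappa\|u_1-u_2\|_X$ with $\kappa<1$ provided $k$ is large enough, which is compatible with the choice already made for $(1)$.

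The main obstacle I anticipate is twofold. First, the $\pm$-coupling forces one to run every estimate simultaneously for the pair $(j^+,j^-)$, so the smallness extracted from the weight trick must absorb both directions of the interaction at once; this is exactly why the symmetric quadratic bound \Rref{equation:G+-est} is so convenient. Second, a careful bookkeeping is required to guarantee that all generic constants $C(\nu,\mathbf{y}^*)$ arising along the way are genuinely independent of the iterate $(\overline{\boldsymbol{\mathfrak{z}}^+},\overline{\boldsymbol{\mathfrak{z}}^-})$; this hinges on the cutoff $\tilde{\chi}$, the set $\tilde{\mathcal{O}}$, and the extension operators being fixed at the outset of the construction, and on every estimate being routed through $\Omega_3$-norms via the extension continuity together with the uniform bound \Rref{equation:unifbd}---only then are $k$ and $\delta$ truly free parameters that can be adjusted at the end to simultaneously secure both the self-map property and the contraction constant.
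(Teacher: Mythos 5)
Your proposal matches the paper's argument essentially verbatim: part (1) chains the $\operatorname{div}$-$\operatorname{curl}$ elliptic estimate, Remark~\Rref{remark:Transport_Estimate}, Lemma~\Rref{lemma:composition_estimate}, the quadratic bound \eqref{equation:G+-est}, and the weight identity \eqref{equation:weight_trick} exactly as in \eqref{equation:smallerMuEstimate}--\eqref{equation:auxilliary:bound_j+_b}, while part (2) reproduces the $I_1^\pm,I_2^\pm,I_3^\pm$ decomposition of \eqref{equation:contr_estimate1} and the Gr\"onwall flow-difference bounds of Lemma~\Rref{lemma:auxineq}, and your closing remarks on constant bookkeeping and the role of $\tilde m\geq 3$ are precisely the points the paper is careful about. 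The only small imprecision is that the integral attached to the telescoped $G$-difference terms is $\int_0^1\omega_k^{-1}\,ds$ rather than $\int_0^1\omega_k^{-2}\,ds$ (one factor in each summand carries no weight decay), but this still vanishes as $k\to\infty$ so the conclusion stands.
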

	
	The proof of Lemma~\Rref{lemma:stepsForContraction} shall be given in Section~\Rref{section:proofSTepsContr} below. First, we conclude Proposition~\Rref{proposition:smalldata} under the assumption that  Lemma~\Rref{lemma:stepsForContraction} holds. Indeed, then $F$ is a contraction and uniformly continuous on $X_{\nu, k}$ with respect to $\|\cdot\|_{X}$. Therefore $F$ can be uniquely extended to a continuous map $\tilde{F} \colon \overline{X_{\nu, k}} \to \overline{X_{\nu, k}}$, where $\overline{X_{\nu, k}}$ denotes the closure of $X_{\nu, k}$ with respect to $\|\cdot\|_{X}$. Moreover, $\tilde{F}$ is a self-map of $\overline{X_{\nu, k}}$ and contractive in the norm $\|\cdot\|_{X}$. Hence, with the help of Banach's Fixed Point Theorem, $\tilde{F}$ admits a unique fixed point $(\mathbf{z}^+, \mathbf{z}^-) \in \overline{X_{\nu, k}}$, see also \cite[Theorem 3]{BoundaryControl_EulerBoussinesq_Cara_Santos_Souza}. By the construction of $F$ and the definition of the set $X_{\nu, k}$, this fixed point is a solution to \eqref{equation:MHD_curled_Elsaesser} with $\mathbf{z}^+(\mathbf{x}, 1) = \mathbf{z}^-(\mathbf{x}, 1) = \mathbf{0}$ for all $\mathbf{x} \in \Omega$. In order to verify $\mathbf{z}^{\pm} \in \xLinfty([0,T];\xCn{{\tilde{m},\alpha}}(\overline{\Omega}; \mathbb{R}^2))$, let $(\mathbf{z}^+_n, \mathbf{z}^-_n)_{n \in \mathbb{N}} \subseteq X_{\nu, k}$ be a sequence obeying
	\[
	(\mathbf{z}^+_n, \mathbf{z}^-_n)_n \to (\mathbf{z}^+, \mathbf{z}^-) \mbox{ in } \|\cdot\|_X, \, \mbox{ as } n \to +\infty.
	\]
	Then, by the definition of the set $X_{\nu, k}$, there is a uniform bound $R > 0$ such that for all $n \in \mathbb{N}$ it holds
	\[
	\max_{t \in [0,1]} \|\mathbf{z}^+_n\|_{\tilde{m},\alpha,\Omega}(t) + \max_{t \in [0,1]} \|\mathbf{z}^-_n\|_{\tilde{m},\alpha,\Omega}(t) \leq R,
	\]
	which yields $(\mathbf{z}^+, \mathbf{z}^-) \in \xLinfty([0,T];\xCn{{\tilde{m},\alpha}}(\overline{\Omega}; \mathbb{R}^2))^2$.
	
	\subsection{Proof of Lemma~\Rref{lemma:stepsForContraction} (1): $F$ is a self map of $X_{\nu,k}$}\label{section:proofSTepsContr}
	Let $(\overline{\boldsymbol{\mathfrak{z}}^{+}}, \overline{\boldsymbol{\mathfrak{z}}^{-}}) \in X_{\nu, k}$ be arbitrarily fixed and based on this choice denote $\boldsymbol{\mathfrak{z}}^{\pm}$, $\boldsymbol{\mathcal{Z}}^{\pm}$, $j^{\pm}$, $\varphi^{\pm}$, $\tilde{\mathbf{z}}^{\pm}$ and $\mathbf{z}^{\pm}$ from the construction of $F$ as previously. In particular, \eqref{equation:phi} implies for each time $t \in [0,1]$ that
	\begin{equation}\label{equation:curlest}
		\begin{cases}
			\operatorname{div}(\boldsymbol{\nabla}^{\perp}\varphi^{\pm}) = 0, & \mbox{ in } \Omega_1,\\
			\boldsymbol{\nabla}^{\perp}\varphi^{\pm} \cdot \mathbf{n} = \partial_{\boldsymbol{\tau}} \varphi^{\pm} = 0, & \mbox{ on } \partial \Omega_1,
		\end{cases}
	\end{equation}
	and thus $\|\boldsymbol{\nabla}^{\perp}\varphi^{\pm} \|_{\tilde{m},\alpha,\Omega_1}(t) \leq C \|\operatorname{curl}(\boldsymbol{\nabla}^{\perp}\varphi^{\pm}) \|_{\tilde{m}-1,\alpha,\Omega_1}(t)$. Accordingly, for all $t \in [0,1]$ one can deduce that
	\begin{equation}\label{equation:smallerMuEstimate}
		\begin{aligned}
			\omega_k(t) \|\mathbf{z}^{\pm} - \overline{\mathbf{y}}\|_{\tilde{m},\alpha,\Omega}(t) & = && \omega_k(t) \|\tilde{\mathbf{z}}^{\pm} - \mathbf{y}^*\|_{\tilde{m},\alpha,\Omega}(t) \\
			& \leq && \omega_k(t)\|\boldsymbol{\nabla}^{\perp}\varphi^{\pm} \|_{\tilde{m},\alpha,\Omega_1}(t) + \omega_k(t)\|\mathbf{z}^{\pm}_0\|_{\tilde{m},\alpha,\Omega}\\
			& \leq &&  C\omega_k(t)\|\operatorname{curl}(\boldsymbol{\nabla}^{\perp}\varphi^{\pm}) \|_{\tilde{m}-1,\alpha,\Omega_1}(t) + \omega_k(t)\|\mathbf{z}^{\pm}_0\|_{\tilde{m},\alpha,\Omega}\\
			& \leq && 	C \omega_k(t) \left[ \| j^{\pm} \|_{\tilde{m}-1, \alpha, \Omega_1}(t) + \|\mathbf{z}^{\pm}_0\|_{\tilde{m},\alpha,\Omega} \right].
		\end{aligned}
	\end{equation}
	In order to estimate $\|j^{\pm}\|_{\tilde{m}-1, \alpha, \Omega_1}(t)$ for $t \in [0, 1]$, one can employ Remark~\Rref{remark:Transport_Estimate} and obtains
	\begin{equation}\label{equation:auxilliary:bound_j+_a}
		\begin{aligned}
			\|j^{\pm} \|_{\tilde{m}-1,\alpha,\Omega_1}(t) & \leq && \|j^{\pm} \|_{\tilde{m}-1,\alpha,\Omega_3} (t)  \\
			& \leq && \operatorname{exp}\left(C \int_0^t \|\boldsymbol{\mathfrak{z}}^{\mp}\|_{\tilde{m}-1, \alpha, \Omega_3}(s)  \, ds \right) \left(\int_0^t \|G^{\pm}\|_{\tilde{m}-1,\alpha, \Omega_3}(s) \, ds + \|j^{\pm}_0\|_{\tilde{m}-1,\alpha,\Omega_3} \right)  \\
			& \leq && C(\nu, \mathbf{y}^*)\left[ \int_0^t \|G^{\pm}\|_{\tilde{m}-1,\alpha, \Omega_3}(s) \, ds + \|j^{\pm}_0\|_{\tilde{m}-1,\alpha,\Omega_3} \right],
		\end{aligned}
	\end{equation}
	where $C(\nu, \mathbf{y}^*) > 0$ includes the terms coming from the estimate
	\begin{equation*}
		\begin{aligned}
			\operatorname{exp}\left(C \int_0^t \|\boldsymbol{\mathfrak{z}}^{\mp}\|_{\tilde{m}-1, \alpha, \Omega_3}(s)  \, ds \right) & \leq && \operatorname{exp}\left(C \int_0^t \left[ \|\boldsymbol{\mathfrak{z}}^{\mp} - \overline{\mathbf{y}}\|_{\tilde{m}-1, \alpha, \Omega}(s) + \|\overline{\mathbf{y}}\|_{\tilde{m}-1, \alpha, \Omega}(s) \right] \, ds \right) \\
			& \leq && \operatorname{exp}\left(C \int_0^t \left[ \nu + \|\overline{\mathbf{y}}\|_{\tilde{m}-1, \alpha, \Omega}(s) \right] \, ds \right).
		\end{aligned}
	\end{equation*}
	Thus, by inserting the definition of $j^{\pm}_0$ in \eqref{equation:constructionF:InitialDataExtension} into \eqref{equation:auxilliary:bound_j+_a} and applying Lemma~\Rref{lemma:composition_estimate} one gets
	\begin{equation}\label{equation:auxilliary:bound_j+_b}
		\begin{aligned}
			\|j^{\pm} \|_{\tilde{m}-1,\alpha,\Omega_3}(t)& \leq && C(\nu, \mathbf{y}^*)\left[ \int_0^t \|G^{\pm}\|_{\tilde{m}-1,\alpha, \Omega_3}(s) \, ds + \|\mathbf{z}^{\pm}_0\|_{\tilde{m},\alpha,\Omega} + \int_0^1 \|G^{\pm}\|_{\tilde{m}-1,\alpha, \Omega_3}(s) \, ds \right]  \\
			& \leq && C(\nu, \mathbf{y}^*) \int_0^{t} \frac{\omega_k(s)^2}{\omega_k(s)^2} \left[\|\boldsymbol{\mathfrak{z}}^{\pm} - \overline{\mathbf{y}}\|_{\tilde{m},\alpha, \Omega}(s) + \|\boldsymbol{\mathfrak{z}}^{\mp} - \overline{\mathbf{y}}\|_{\tilde{m},\alpha, \Omega}(s)  \right]^2 \, ds \\
			& &&  + C(\nu, \mathbf{y}^*) \int_0^{1} \frac{\omega_k(s)^2}{\omega_k(s)^2} \left[\|\boldsymbol{\mathfrak{z}}^{\pm} - \overline{\mathbf{y}}\|_{\tilde{m},\alpha, \Omega}(s) + \|\boldsymbol{\mathfrak{z}}^{\mp} - \overline{\mathbf{y}}\|_{\tilde{m},\alpha, \Omega}(s)  \right]^2 \, ds  + C(\nu, \mathbf{y}^*) \|\mathbf{z}^{\pm}_0\|_{\tilde{m},\alpha,\Omega} \\
			& \leq && C(\nu, \mathbf{y}^*) \int_0^{1} \frac{1}{\omega_k(s)^2} \, ds + C(\nu, \mathbf{y}^*) \|\mathbf{z}^{\pm}_0\|_{\tilde{m},\alpha,\Omega}.
		\end{aligned}
	\end{equation}
	Next, for all $t \in [0,1]$, the estimates \eqref{equation:weight_trick}, \eqref{equation:smallerMuEstimate} and \eqref{equation:auxilliary:bound_j+_b} yield
	\begin{equation*}\label{equation:smallerMuEstimateb}
		\begin{aligned}
			\omega_k(t) \|\mathbf{z}^{\pm} - \overline{\mathbf{y}}\|_{\tilde{m},\alpha,\Omega}(t) & \leq && C(\nu, \mathbf{y}^*) \omega_k(t) \int_0^{1} \frac{1}{\omega_k(s)^2} \, ds + C(\nu, \mathbf{y}^*) \omega_k(t) \|\mathbf{z}^{\pm}_0\|_{\tilde{m}, \alpha, \Omega}  \\
			& \leq &&  \frac{C(\nu, \mathbf{y}^*)}{2k+1} + C(\nu, \mathbf{y}^*) \omega_k(t) \|\mathbf{z}^{\pm}_0\|_{\tilde{m}, \alpha, \Omega}.
		\end{aligned}
	\end{equation*}
	Thus, provided that $k$ is sufficiently large depending on $\nu$ and $\mathbf{y}^*$ and as long as $\|\mathbf{z}^{\pm}_0\|_{\tilde{m},\alpha,\Omega}$ is sufficiently small, depending on $\nu, \mathbf{y}^*$ as well as on $k$, one arrives at
	\begin{equation*}\label{equation:smallerMuEstimatec}
		\begin{aligned}
			\omega_k(t) \|\mathbf{z}^{\pm} - \overline{\mathbf{y}}\|_{\tilde{m},\alpha,\Omega}(t) < \nu,
		\end{aligned}
	\end{equation*}
	and $F(X_{\nu, k}) \subseteq X_{\nu, k}$ is proved. \qed
	
	\subsection{Proof of Lemma~\Rref{lemma:stepsForContraction} (2): $F$ is contractive with respect to $\|\cdot\|_X$}\label{subsection:contr}
	Following the construction of $F$ above, for $i = 1, 2$, we assign to arbitrarily fixed pairs $(\overline{\boldsymbol{\mathfrak{z}}^{+,i}}, \overline{\boldsymbol{\mathfrak{z}}^{-,i}}) \in X_{\nu, k}$ the corresponding extensions $\boldsymbol{\mathfrak{z}}^{\pm,i}$ and then according to \eqref{equation:G+-}, \eqref{equation:constructionF:j+} as well as \eqref{equation:phi} the functions $G^{\pm,i}$, $j^{\pm,i}$, $\varphi^{\pm,i}$, $\mathbf{z}^{\pm,i}$. In particular, the differences $J^{\pm} := j^{\pm,1}-j^{\pm,2}$ satisfy
	\begin{equation}\label{equation:J+-}
		\begin{cases}
			\partial_t J^{\pm} + (\boldsymbol{\mathfrak{z}}^{\mp,1} \cdot \boldsymbol{\nabla}) J^{\pm} + ((\boldsymbol{\mathfrak{z}}^{\mp,1}-\boldsymbol{\mathfrak{z}}^{\mp,2}) \cdot \boldsymbol{\nabla})j^{\pm,2}  = G^{\pm, 1} - G^{\pm, 2}, & \mbox{ in } \Omega_3 \times (0, 1),\\
			J^{\pm} (\cdot, 0)  =  j^{\pm,1}_0(\cdot)-j^{\pm,2}_0(\cdot), & \mbox{ in } \Omega_3,
		\end{cases}
	\end{equation}
	whereas $j^{\pm,1}_0-j^{\pm,2}_0$ vanish only in $\Omega_2$ but not necessarily in $\Omega_3\setminus\overline{\Omega}_2$, and where
	\begin{equation}\label{equation:GDifference}
		\begin{aligned}
			G^{\pm, 1} - G^{\pm, 2} & = && -(\partial_1 \mathfrak{z}_1^{\mp,1} - \partial_1 \mathfrak{z}_1^{\pm,1}) \left[ (\partial_1 \mathfrak{z}_2^{\pm,1} - \partial_1 \mathfrak{z}_2^{\pm,2}) + (\partial_2 \mathfrak{z}_1^{\pm,1} - \partial_2 \mathfrak{z}_1^{\pm,2}) \right] \\
			& && -(\partial_1 \mathfrak{z}_2^{\pm,2} - \partial_1\mathbf{y}^*_2) \left[ (\partial_1 \mathfrak{z}_1^{\mp,1} - \partial_1 \mathfrak{z}_1^{\mp,2}) + (\partial_1 \mathfrak{z}_1^{\pm,2} - \partial_1 \mathfrak{z}_1^{\pm,1}) \right]\\
			& && -(\partial_2 \mathfrak{z}_1^{\pm,2} - \partial_2\mathbf{y}^*_1) \left[ (\partial_1 \mathfrak{z}_1^{\mp,1} - \partial_1 \mathfrak{z}_1^{\mp,2}) + (\partial_1 \mathfrak{z}_1^{\pm,2} - \partial_1 \mathfrak{z}_1^{\pm,1}) \right]\\
			& && -(\partial_1 \mathfrak{z}_1^{\pm,1} - \partial_1\mathbf{y}^*_1) \left[ (\partial_1 \mathfrak{z}_2^{\pm,1} - \partial_1 \mathfrak{z}_2^{\pm,2}) + (\partial_1 \mathfrak{z}_2^{\mp,2} - \partial_1 \mathfrak{z}_2^{\mp,1}) \right]\\
			& && -(\partial_1 \mathfrak{z}_1^{\pm,1} - \partial_1\mathbf{y}^*_1) \left[ (\partial_2 \mathfrak{z}_1^{\pm,1} - \partial_2 \mathfrak{z}_1^{\pm,2}) + (\partial_2 \mathfrak{z}_1^{\mp,2} - \partial_2 \mathfrak{z}_1^{\mp,1}) \right]\\
			& && + (\partial_1 \mathfrak{z}_1^{\pm,2} - \partial_1 \mathfrak{z}_1^{\pm,1}) \left[ \partial_1 \mathfrak{z}_2^{\pm,2} - \partial_1 \mathfrak{z}_2^{\mp,2}\right]\\
			& && + (\partial_1 \mathfrak{z}_1^{\pm,2} - \partial_1 \mathfrak{z}_1^{\pm,1}) \left[ \partial_2 \mathfrak{z}_1^{\pm,2} - \partial_2 \mathfrak{z}_1^{\mp,2}\right].
		\end{aligned}
	\end{equation}
	From the definition of $X_{\nu, k}$, since each term in \eqref{equation:GDifference} contains a factor that can be bounded by $2\|\boldsymbol{\mathfrak{z}}^{\pm,i} - \overline{\mathbf{y}}\|_{1,\alpha,\Omega}$, one has with a constant $K_1 > 0$, which is independent of $(\overline{\boldsymbol{\mathfrak{z}}^{+,i}}, \overline{\boldsymbol{\mathfrak{z}}^{-,i}}) \in X_{\nu, k}$, $i=1,2$, for all $t \in [0,1]$ that
	\begin{equation}\label{equation:diffestG+-_1_2}
		\begin{aligned}
			\omega_k(t)\|G^{\pm, 1} - G^{\pm, 2}\|_{0,\alpha, \Omega_3}(t) \leq \nu K_1\left( \|\boldsymbol{\mathfrak{z}}^{+,1} - \boldsymbol{\mathfrak{z}}^{+,2}\|_{1,\alpha, \Omega}(t) + \|\boldsymbol{\mathfrak{z}}^{-,1} - \boldsymbol{\mathfrak{z}}^{-,2}\|_{1,\alpha, \Omega}(t) \right).
		\end{aligned}
	\end{equation}
	
	The goal is now to estimate for each $t \in [0,1]$ the quantity
	\[
	D(t) := \|\mathbf{z}^{+,1} - \mathbf{z}^{+,2}\|_{1,\alpha, \Omega}(t) + \|\mathbf{z}^{-,1} - \mathbf{z}^{-,2}\|_{1,\alpha, \Omega}(t).
	\]
	Hereto, from \eqref{equation:J+-}, using the idea of \eqref{equation:curlest} and Remark~\Rref{remark:Transport_Estimate}, one has
	\begin{equation}\label{equation:contr_estimate1}
		\begin{aligned}
			D(t) & \leq && C \left( \|\boldsymbol{\nabla}^{\perp}\varphi^{+, 1} - \boldsymbol{\nabla}^{\perp}\varphi^{+, 2}\|_{1,\alpha, \Omega_1}(t) + \|\boldsymbol{\nabla}^{\perp}\varphi^{-, 1} - \boldsymbol{\nabla}^{\perp}\varphi^{-, 2}\|_{1,\alpha, \Omega_1}(t) \right) \\
			& \leq && C \left( \|J^+\|_{0,\alpha, \Omega_1}(t) + \|J^-\|_{0,\alpha, \Omega_1}(t) \right)\\
			& \leq && C \left( \|J^+\|_{0,\alpha, \Omega_3}(t) + \|J^-\|_{0,\alpha, \Omega_3}(t) \right)\\
			& \leq && I_1^+(t) + I_1^-(t) + I_2^+(t) + I_2^-(t) + I_3^+(t) + I_3^-(t),
		\end{aligned}
	\end{equation}
	where
	\begin{equation*}
		\begin{aligned}
			I_1^{\pm}(t) & := && \tilde{C}(\nu, \mathbf{y}^*) \int_0^t  \|\boldsymbol{\mathfrak{z}}^{\mp,1} -  \boldsymbol{\mathfrak{z}}^{\mp,2}\|_{0,\alpha, \Omega}(s) \|j^{\pm,2}\|_{1,\alpha, \Omega_3}(s)\, ds,\\
			I_2^{\pm}(t) & : = && \tilde{C}(\nu, \mathbf{y}^*) \int_0^t  \|G^{\pm,1} -  G^{\pm,2}\|_{0,\alpha, \Omega_3}(s) \, ds, \\
			I_3^{\pm}(t) & := && \tilde{C}(\nu, \mathbf{y}^*) \int_{0}^{1} \left\| G^{\pm,1}(\boldsymbol{\mathcal{Z}}^{\mp,1}(\cdot,0,s), s)-G^{\pm,2}(\boldsymbol{\mathcal{Z}}^{\mp,2}(\cdot,0,s), s) \right\|_{0,\alpha,\Omega_3} \, ds.
		\end{aligned}
	\end{equation*}
	Hereby, in $I_3^{\pm}(t)$ the extended initial data defined in \eqref{equation:constructionF:InitialDataExtension} has been taken into account. Let from now on $\kappa \in (0,1)$ be an arbitrary but fixed constant. Then, with $K_2 := \max\{1, C(\nu, \mathbf{y}^*), \tilde{C}(\nu, \mathbf{y}^*)\}$, where $C(\nu, \mathbf{y}^*)$ is the constant from the last line of \eqref{equation:auxilliary:bound_j+_b}, the terms $I_1^{\pm}(t)$ obey
	\begin{equation}\label{equation:contr_estimate2}
		\begin{aligned}
			I_1^{\pm}(t) & \leq && K_2^2 \left[\int_0^{1} \frac{1}{\omega_k(s)^2} \, ds + \|\mathbf{z}^{\pm}_0\|_{2,\alpha,\Omega}\right] \max\limits_{t \in [0,1]}  \|\boldsymbol{\mathfrak{z}}^{\mp,1} -  \boldsymbol{\mathfrak{z}}^{\mp,2}\|_{0,\alpha, \Omega}(t) \\
			& \leq && \frac{\kappa}{6} \max\limits_{t \in [0,1]} \|\overline{\boldsymbol{\mathfrak{z}}^{\mp,1}} -  \overline{\boldsymbol{\mathfrak{z}}^{\mp,2}}\|_{1,\alpha, \Omega}(t).
		\end{aligned}
	\end{equation}
	For obtaining \eqref{equation:contr_estimate2}, we may have increased $k$ and reduced $\|\mathbf{z}^{\pm}_0\|_{2,\alpha,\Omega}$, which is justified since $K_2$ does not depend on the choices for $\overline{\boldsymbol{\mathfrak{z}}^{+,i}}$ and $\overline{\boldsymbol{\mathfrak{z}}^{-,i}}$, $i = 1, 2$, but only on fixed objects such as the domain, $\nu$ and $\mathbf{y}^*$. Next, for estimating the two terms $I_2^{\pm}(t)$, one can use \eqref{equation:diffestG+-_1_2} in order to obtain for all $t \in [0,1]$ that
	\begin{equation}\label{equation:contr_estimate3}
		\begin{aligned}
			I_2^{\pm}(t) & = && \tilde{C}(\nu, \mathbf{y}^*) \int_0^t \frac{\omega_k(s)}{\omega_k(s)} \|G^{\pm,1} -  G^{\pm,2}\|_{0,\alpha, \Omega_3}(s) \, ds \\
			& \leq && K_1K_2 \left[\int_0^{1} \frac{1}{\omega_k(s)} \, ds \right] \max\limits_{t \in [0,1]} \left\{ \|\boldsymbol{\mathfrak{z}}^{+,1} -  \boldsymbol{\mathfrak{z}}^{+,2}\|_{1,\alpha, \Omega}(t) + \|\boldsymbol{\mathfrak{z}}^{-,1} -  \boldsymbol{\mathfrak{z}}^{-,2}\|_{1,\alpha, \Omega}(t)\right\} \\
			& \leq && \frac{\kappa}{6} \max\limits_{t \in [0,1]} \left\{ \|\overline{\boldsymbol{\mathfrak{z}}^{+,1}}-  \overline{\boldsymbol{\mathfrak{z}}^{+,2}}\|_{1,\alpha, \Omega}(t) + \|\overline{\boldsymbol{\mathfrak{z}}^{-,1}} -  \overline{\boldsymbol{\mathfrak{z}}^{-,2}}\|_{1,\alpha, \Omega}(t)\right\},
		\end{aligned}
	\end{equation}
	where we may again have increased $k$ and reduced $\|\mathbf{z}^{\pm}_0\|_{2,\alpha,\Omega}$. Since the calculations regarding $I_3^{\pm}(t)$ are longer, let us assume at first the following bounds to be true as well:
	\begin{lmm}\label{lemma:auxineq}
		After possibly increasing $k$ and then reducing $\|\mathbf{z}^{\pm}_0\|_{2,\alpha,\Omega}$, for all $t \in [0,1]$ the terms $I_3^{\pm}(t)$ satisfy
		\[
		I_3^{\pm}(t) \leq \frac{\kappa}{6} \max\limits_{t \in [0,1]} \left\{ \|\overline{\boldsymbol{\mathfrak{z}}^{+,1}} -  \overline{\boldsymbol{\mathfrak{z}}^{+,2}}\|_{1,\alpha, \Omega}(t) + \|\overline{\boldsymbol{\mathfrak{z}}^{-,1}} -  \overline{\boldsymbol{\mathfrak{z}}^{-,2}}\|_{1,\alpha, \Omega}(t)\right\}.
		\]
	\end{lmm}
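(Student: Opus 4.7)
The plan is to telescope the difference inside the $\xCn{{0,\alpha}}(\overline{\Omega}_3)$ norm as
\begin{align*}
G^{\pm,1}(\boldsymbol{\mathcal{Z}}^{\mp,1}(\cdot,0,s),s)-G^{\pm,2}(\boldsymbol{\mathcal{Z}}^{\mp,2}(\cdot,0,s),s) &= \bigl[G^{\pm,1}-G^{\pm,2}\bigr](\boldsymbol{\mathcal{Z}}^{\mp,1}(\cdot,0,s),s) \\
&\quad + \bigl[G^{\pm,2}(\boldsymbol{\mathcal{Z}}^{\mp,1}(\cdot,0,s),s)-G^{\pm,2}(\boldsymbol{\mathcal{Z}}^{\mp,2}(\cdot,0,s),s)\bigr]
\end{align*}
and to control the two summands in complementary fashions: the first through the smallness of $G^{\pm,1}-G^{\pm,2}$ already recorded in \eqref{equation:diffestG+-_1_2}, and the second through the smallness of the flow difference $\boldsymbol{\mathcal{Z}}^{\mp,1}-\boldsymbol{\mathcal{Z}}^{\mp,2}$. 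In both cases the weight $\omega_k$ will be used to convert a product of differences into an integrable quantity that vanishes as $k\to+\infty$.

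For the first summand, the argument from the proof of Lemma~\ref{lemma:composition_estimate} applies verbatim at the $\xCn{{0,\alpha}}$ level, since only the Lipschitz bound \eqref{equation:flowgradest} for $\boldsymbol{\mathcal{Z}}^{\mp,1}$ enters and this bound is uniform in $X_{\nu,k}$. Combined with \eqref{equation:diffestG+-_1_2}, this produces the pointwise estimate
\[
\|[G^{\pm,1}-G^{\pm,2}](\boldsymbol{\mathcal{Z}}^{\mp,1}(\cdot,0,s),s)\|_{0,\alpha,\Omega_3} \leq \frac{C(\nu,\mathbf{y}^*)\nu K_1}{\omega_k(s)}\Bigl(\|\overline{\boldsymbol{\mathfrak{z}}^{+,1}}-\overline{\boldsymbol{\mathfrak{z}}^{+,2}}\|_{1,\alpha,\Omega}(s)+\|\overline{\boldsymbol{\mathfrak{z}}^{-,1}}-\overline{\boldsymbol{\mathfrak{z}}^{-,2}}\|_{1,\alpha,\Omega}(s)\Bigr),
\]
and integrating over $s\in[0,1]$ yields the prefactor $\int_0^1 \omega_k(s)^{-1}\,ds=\tfrac{8}{k+1}\bigl[(5/8)^{k+1}-(1/2)^{k+1}\bigr]$, which tends to $0$ as $k\to+\infty$.

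For the second summand I would use the mean-value identity
\[
G^{\pm,2}(\boldsymbol{\mathcal{Z}}^{\mp,1}(\cdot,0,s),s)-G^{\pm,2}(\boldsymbol{\mathcal{Z}}^{\mp,2}(\cdot,0,s),s) = \int_0^1 \boldsymbol{\nabla} G^{\pm,2}(\Xi_\theta(\cdot,s),s)\cdot(\boldsymbol{\mathcal{Z}}^{\mp,1}-\boldsymbol{\mathcal{Z}}^{\mp,2})(\cdot,0,s)\,d\theta,
\]
with $\Xi_\theta:=\theta\boldsymbol{\mathcal{Z}}^{\mp,1}+(1-\theta)\boldsymbol{\mathcal{Z}}^{\mp,2}$, and bound its $\xCn{{0,\alpha}}(\overline{\Omega}_3)$ norm by the product of $\|G^{\pm,2}\|_{1,\alpha,\Omega_3}(s)$ and a Hölder norm of the flow difference. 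Since $\tilde{m}\geq 3$, the quadratic estimate \eqref{equation:G+-est} together with the defining inequality of $X_{\nu,k}$ gives $\|G^{\pm,2}\|_{1,\alpha,\Omega_3}(s)\leq 4C\nu^2/\omega_k(s)^2$, while a Grönwall estimate applied to the difference of the ODEs \eqref{equation:constrFlowOde}, combined with \eqref{equation:flowgradest} and the continuity of $\pi_2$, yields
\[
\|\boldsymbol{\mathcal{Z}}^{\mp,1}(\cdot,0,s)-\boldsymbol{\mathcal{Z}}^{\mp,2}(\cdot,0,s)\|_{0,\alpha,\Omega_3} \leq C(\nu,\mathbf{y}^*)\int_0^s\|\overline{\boldsymbol{\mathfrak{z}}^{\mp,1}}-\overline{\boldsymbol{\mathfrak{z}}^{\mp,2}}\|_{1,\alpha,\Omega}(r)\,dr.
\]
Integrating the resulting pointwise bound over $s\in[0,1]$ produces the prefactor $\int_0^1 \omega_k(s)^{-2}\,ds$, which by the computation in \eqref{equation:weight_trick} tends to $0$ as $k\to+\infty$ as well. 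Fixing $k=k(\nu,\mathbf{y}^*)$ large enough so that both integrals absorb the multiplicative constants, and then, if necessary, reducing $\|\mathbf{z}^{\pm}_0\|_{2,\alpha,\Omega}$ to keep $X_{\nu,k}$ nonempty, yields the desired $\kappa/6$ bound. The hardest point, in my view, is the Hölder seminorm control of this second summand: the $\theta$-integral forces one to estimate $[\boldsymbol{\nabla} G^{\pm,2}\circ\Xi_\theta]_{0,\alpha}$ simultaneously with $[\boldsymbol{\mathcal{Z}}^{\mp,1}-\boldsymbol{\mathcal{Z}}^{\mp,2}]_{0,\alpha}$, and it is precisely here that the stronger $1/\omega_k^2$ weight coming from the structured representation \eqref{equation:sourceterm_structured_y} (via the quadratic bound \eqref{equation:G+-est}) is indispensable.
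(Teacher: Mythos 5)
Your proof follows essentially the same route as the paper: the paper decomposes $G^{\pm,1}(\boldsymbol{\mathcal{Z}}^{\mp,1})-G^{\pm,2}(\boldsymbol{\mathcal{Z}}^{\mp,2})$ into the two pieces $A^{\pm}+B^{\pm}$ via the same telescoping (merely with the roles of the superscripts $1$ and $2$ interchanged relative to yours), controls the source-difference piece via \eqref{equation:diffestG+-_1_2} combined with the $\xCn{{0,\alpha}}$-level composition estimate exactly as you do, and controls the flow-difference piece via the Mean Value Theorem and Gr\"onwall bounds on $\boldsymbol{\mathcal{Z}}^{\mp,1}-\boldsymbol{\mathcal{Z}}^{\mp,2}$ as you propose (the paper routes this through the $\xCn{1}$ norm and hence uses $\|G^{\pm,1}\|_{2,\alpha,\Omega_3}$ in \eqref{equation:reg3}, whereas you stay at the $\xCn{{0,\alpha}}$ level, a harmless variation). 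One small correction to your closing remark: for $I_3^{\pm}$ the paper is content with the prefactor $\int_0^1\omega_k(s)^{-1}\,ds$ (see \eqref{equation:contr_estimate4}), so the $\omega_k^{-2}$ decay from \eqref{equation:G+-est} is not strictly indispensable here; it is genuinely needed only in \eqref{equation:auxilliary:bound_j+_b}, where the exterior factor $\omega_k(t)$ must also be absorbed.
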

	Under the assumption that Lemma~\Rref{lemma:auxineq} is valid, one can sum up all the estimates for $I_1^{\pm}$, $I_2^{\pm}$, $I_3^{\pm}$ and complete the proof for Lemma~\Rref{lemma:stepsForContraction} (2) by observing that
	\begin{equation*}
		\begin{aligned}
			\|F(\overline{\boldsymbol{\mathfrak{z}}^{+,1}}, \overline{\boldsymbol{\mathfrak{z}}^{-,1}}) - F(\overline{\boldsymbol{\mathfrak{z}}^{+,2}}, \overline{\boldsymbol{\mathfrak{z}}^{-,2}}) \|_{X} & = && \max\limits_{t \in [0,1]} D(t) \\
			& \leq && \kappa \max\limits_{t \in [0,1]} \left\{ \|\overline{\boldsymbol{\mathfrak{z}}^{+,1}} -  \overline{\boldsymbol{\mathfrak{z}}^{+,2}}\|_{1,\alpha, \Omega}(t) + \|\overline{\boldsymbol{\mathfrak{z}}^{-,1}} -  \overline{\boldsymbol{\mathfrak{z}}^{-,2}}\|_{1,\alpha, \Omega}(t)\right\} \\
			& = && \kappa \|(\overline{\boldsymbol{\mathfrak{z}}^{+,1}}, \overline{\boldsymbol{\mathfrak{z}}^{-,1}}) - (\overline{\boldsymbol{\mathfrak{z}}^{+,2}}, \overline{\boldsymbol{\mathfrak{z}}^{-,2}})\|_{X}.
		\end{aligned}
	\end{equation*}\qed
	
	It is left to verify Lemma~\Rref{lemma:auxineq}.
	\begin{proof}[Proof of Lemma~\Rref{lemma:auxineq}]
		We employ for every $t \in [0,1]$ the estimate
		\begin{equation}\label{equation:G+-_1_2_decomp}
			\begin{aligned}
				\left\| G^{\pm,1}(\boldsymbol{\mathcal{Z}}^{\mp,1}(\cdot,0,t), t)-G^{\pm,2}(\boldsymbol{\mathcal{Z}}^{\mp,2}(\cdot,0,t), t) \right\|_{0,\alpha,\Omega_3} \leq A^{\pm}(t) + B^{\pm}(t),
			\end{aligned}
		\end{equation}
		where
		\[
		\begin{cases}
			A^{\pm}(t) := \left\| G^{\pm,1}(\boldsymbol{\mathcal{Z}}^{\mp,1}(\cdot,0,t), t)-G^{\pm,1}(\boldsymbol{\mathcal{Z}}^{\mp,2}(\cdot,0,t), t)\right\|_{0,\alpha,\Omega_3},\\
			B^{\pm}(t) := \left\|G^{\pm,1}(\boldsymbol{\mathcal{Z}}^{\mp,2}(\cdot,0,t), t) - G^{\pm,2}(\boldsymbol{\mathcal{Z}}^{\mp,2}(\cdot,0,t), t) \right\|_{0,\alpha,\Omega_3}.
		\end{cases}
		\]
		
		We begin with treating $B^{\pm}(t)$ and thereto write $\mathcal{G}^{\pm}(x,t) := G^{\pm,1}(\mathbf{x}, t) - G^{\pm,2}(\mathbf{x}, t)$. Since the flows $\boldsymbol{\mathcal{Z}}^{\mp,2}$ always remain in $\Omega_3$, one can estimate
		\begin{equation}\label{equation:B+-0}
			\begin{aligned}
				B^{\pm}(t)  & \leq &&  \sup\limits_{\substack{\mathbf{x},\mathbf{y} \in \Omega_3 \\ \mathbf{x}\neq \mathbf{y}}} \Bigg\{  \frac{\Big| \left[ \mathcal{G}^{\pm} \right](\boldsymbol{\mathcal{Z}}^{\mp,2}(\mathbf{x},0,t), t)-\left[ \mathcal{G}^{\pm} \right](\boldsymbol{\mathcal{Z}}^{\mp,2}(\mathbf{y},0,t), t)\Big|}{\Big| \boldsymbol{\mathcal{Z}}^{\mp,2}(\mathbf{x},0,t)-\boldsymbol{\mathcal{Z}}^{\mp,2}(\mathbf{y},0,t) \Big|^{\alpha}} \frac{\Big| \boldsymbol{\mathcal{Z}}^{\mp,2}(\mathbf{x},0,t)-\boldsymbol{\mathcal{Z}}^{\mp,2}(\mathbf{y},0,t) \Big|^{\alpha}}{|\mathbf{x}-\mathbf{y}|^{\alpha}} \Bigg\} \\
				& &&  + \sup\limits_{\mathbf{x} \in \Omega_3} \left| \left[ \mathcal{G}^{\pm} \right](\mathbf{x}, \sigma) \right|\\
				& \leq && \left\| \mathcal{G}^{\pm}(\cdot,\sigma) \right\|_{0,\alpha,\Omega_3} \left(1 + \sup\limits_{\mathbf{x} \in \Omega_3} \left| \boldsymbol{\nabla} \boldsymbol{\mathcal{Z}}^{\mp,2}(\mathbf{x},0,t) \right|^{\alpha} \right).
			\end{aligned}
		\end{equation}
		Then, with the aid of \eqref{equation:diffestG+-_1_2} and considerations similar to \eqref{equation:flowgradest}, it is possible to choose a constant $K_3 = K_3(\nu, \mathbf{y}^*) > 0$, which does not depend on $(\overline{\boldsymbol{\mathfrak{z}}^{+,i}}, \overline{\boldsymbol{\mathfrak{z}}^{-,i}}) \in X_{\nu, k}$, $i = 1,2$, such that
		\begin{equation}\label{equation:B+-1}
			\begin{aligned}
				\omega_k(t) B^{\pm}(t) \leq  K_3 \max\limits_{t \in [0,1]}\left\{\|\boldsymbol{\mathfrak{z}}^{+,1} - \boldsymbol{\mathfrak{z}}^{+,2}\|_{1, \alpha}(t)+\|\boldsymbol{\mathfrak{z}}^{-,1} - \boldsymbol{\mathfrak{z}}^{-,2}\|_{1, \alpha}(t)\right\}.
			\end{aligned}
		\end{equation}
		
		Concerning $A^{\pm}(t)$ we first apply the Chain Rule and the Mean Value Theorem in order to estimate the $\xCn{1}(\overline{\Omega}_3)$-norm of $G^{\pm,1}(\boldsymbol{\mathcal{Z}}^{\mp,1}(\cdot,0,t), t)-G^{\pm,1}(\boldsymbol{\mathcal{Z}}^{\mp,2}(\cdot,0,t), t)$, which results in
		\begin{equation}\label{equation:A+-1}
			\begin{aligned}
				A^{\pm}(t) & \leq &&  \sup\limits_{\mathbf{x} \in \Omega_3} \left|\boldsymbol{\mathcal{Z}}^{\mp,1}(\mathbf{x},0,t) - \boldsymbol{\mathcal{Z}}^{\mp,2}(\mathbf{x},0,t)\right| \|G^{\pm,1}\|_{1,\alpha,\Omega_3}(t)\\
				& && + \underset{:= A_1^{\pm}(t)}{\underbrace{\sum\limits_{l = 1,2} \sup\limits_{\mathbf{x} \in \Omega_3} \left[ \left|\boldsymbol{\nabla} G^{\pm,1}(\boldsymbol{\mathcal{Z}}^{\mp,1})\cdot\partial_l\boldsymbol{\mathcal{Z}}^{\mp,1} - \boldsymbol{\nabla} G^{\pm,1}(\boldsymbol{\mathcal{Z}}^{\mp,2})\cdot\partial_l\boldsymbol{\mathcal{Z}}^{\mp,2} \right| (\mathbf{x},0,t)  \right]}},
			\end{aligned}
		\end{equation}
		where $\left|G^{\pm,i}(\boldsymbol{\mathcal{Z}}^{\mp,i})\cdot\partial_l\boldsymbol{\mathcal{Z}}^{\mp,i}\right|(\mathbf{x},0,t)$ is short for $\left|G^{\pm,i}(\boldsymbol{\mathcal{Z}}^{\mp,i}(\mathbf{x},0,t),t)\cdot\partial_l\boldsymbol{\mathcal{Z}}^{\mp,i}(\mathbf{x},0,t)\right|$.
		Then, by taking into account that the flows $\boldsymbol{\mathcal{Z}}^{\pm}$ cannot leave $\Omega_3$, and using the Mean Value Theorem again, one has for $A_1^{\pm}(t)$ that
		\begin{equation}\label{equation:reg3}
			\begin{aligned}
				A_1^{\pm}(t) & \leq &&  \sum\limits_{l = 1,2} \sup\limits_{\mathbf{x} \in \Omega_3} \Big[ \Big|\boldsymbol{\nabla} G^{\pm,1}(\boldsymbol{\mathcal{Z}}^{\mp,1})\cdot\partial_l(\boldsymbol{\mathcal{Z}}^{\mp,1} - \boldsymbol{\mathcal{Z}}^{\mp,2} + \boldsymbol{\mathcal{Z}}^{\mp,2}) - \boldsymbol{\nabla} G^{\pm,1}(\boldsymbol{\mathcal{Z}}^{\mp,2})\cdot\partial_l\boldsymbol{\mathcal{Z}}^{\mp,2} \Big| (\mathbf{x},0,t)  \Big]\\
				& \leq && \sum\limits_{l = 1,2} \sup\limits_{\mathbf{x} \in \Omega_3} \Big[ \Big|\boldsymbol{\nabla} G^{\pm,1}(\boldsymbol{\mathcal{Z}}^{\mp,1})\cdot\partial_l(\boldsymbol{\mathcal{Z}}^{\mp,1} - \boldsymbol{\mathcal{Z}}^{\mp,2})\Big|(\mathbf{x},0,t) \\
				& && + \Big| (\boldsymbol{\nabla} G^{\pm,1}(\boldsymbol{\mathcal{Z}}^{\mp,1}) - \boldsymbol{\nabla} G^{\pm,1}(\boldsymbol{\mathcal{Z}}^{\mp,2}))\cdot\partial_l\boldsymbol{\mathcal{Z}}^{\mp,2} \Big| (\mathbf{x},0,t)  \Big]\\
				& \leq && C \|G^{\pm, 1}\|_{1,\alpha, \Omega_3}(t)  \sum\limits_{l = 1,2} \sup\limits_{\mathbf{x} \in \Omega_3} \Big| \partial_l\boldsymbol{\mathcal{Z}}^{\mp,1}(\mathbf{x},0,t) - \partial_l\boldsymbol{\mathcal{Z}}^{\mp,2}(\mathbf{x},0,t) \Big| \\
				& && + \|G^{\pm, 1}\|_{2,\alpha,\Omega_3} (t)  \sum\limits_{l = 1,2} \sup\limits_{\mathbf{x} \in \Omega_3} \Big|\partial_l\boldsymbol{\mathcal{Z}}^{\mp,2}(\mathbf{x},0,t) \Big| \Big|\boldsymbol{\mathcal{Z}}^{\mp,1}(\mathbf{x},0,t)-\boldsymbol{\mathcal{Z}}^{\mp,2}(\mathbf{x},0,t)\Big|,
			\end{aligned}
		\end{equation}
		noting that $\|G^{\pm, 1}\|_{2,\alpha,\Omega_3} (t) \leq \|G^{\pm, 1}\|_{\tilde{m}-1,\alpha,\Omega_3} (t)$ since $\tilde{m} \geq 3$. Hence,
		\begin{equation}\label{equation:A+-2}
			\begin{aligned}
				\omega_k(t) A_1^{\pm}(t)& \leq && K_4  \sum\limits_{l = 1,2} \sup\limits_{\mathbf{x} \in \Omega_3} \Big| \partial_l\boldsymbol{\mathcal{Z}}^{\mp,1}(\mathbf{x},0,t) - \partial_l\boldsymbol{\mathcal{Z}}^{\mp,2}(\mathbf{x},0,t) \Big| + K_4 \sup\limits_{\mathbf{x} \in \Omega_3} \Big|\boldsymbol{\mathcal{Z}}^{\mp,1}(\mathbf{x},0,t)-\boldsymbol{\mathcal{Z}}^{\mp,2}(\mathbf{x},0,t)\Big|,
			\end{aligned}
		\end{equation}
		where $K_4 = K_4(\nu, \mathbf{y}^*) > 0$ comes from estimating $\omega_k(t)\|G^{\pm, 1}\|_{\tilde{m}-1,\alpha,\Omega_3} (t)$ using the properties of $X_{\nu,k}$ as previously and by applying estimates similar to \eqref{equation:flowgradest} for including
		\[
		\sup\limits_{\mathbf{x} \in \Omega_3} \Big|\partial_l\boldsymbol{\mathcal{Z}}^{\mp,2}(\mathbf{x},0,t) \Big| \leq C 	\operatorname{exp}\left(\|\boldsymbol{\mathfrak{z}}^{\mp,2}\|_{1,\alpha,\Omega_3}(t)\right) \leq C \operatorname{exp}\left(\|\boldsymbol{\mathfrak{z}}^{\mp,2} - \mathbf{y}^*\|_{1,\alpha,\Omega_3}(t) + \|\mathbf{y}^*\|_{1,\alpha,\Omega_3}(t)\right),
		\]
		into $K_4$ as well. Therefore, by inserting \eqref{equation:A+-2} into \eqref{equation:A+-1}, with a constant $K_5 = K_5(\nu, \mathbf{y}^*) > 0$ which depends on the same quantities as $K_4$ and is independent of $(\overline{\boldsymbol{\mathfrak{z}}^{+,i}}, \overline{\boldsymbol{\mathfrak{z}}^{-,i}}) \in X_{\nu, k}$, $i = 1,2$, it holds that
		\begin{equation}\label{equation:A+-3}
			\begin{aligned}
				\omega_k(t)A^{\pm}(t) & \leq &&  K_5 \sup\limits_{\mathbf{x} \in \Omega_3} \left|\boldsymbol{\mathcal{Z}}^{\mp,1}(\mathbf{x},0,t) - \boldsymbol{\mathcal{Z}}^{\mp,2}(\mathbf{x},0,t)\right| + K_5  \sum\limits_{l = 1,2} \sup\limits_{\mathbf{x} \in \Omega_3} \Big| \partial_l\boldsymbol{\mathcal{Z}}^{\mp,1}(\mathbf{x},0,t) - \partial_l\boldsymbol{\mathcal{Z}}^{\mp,2}(\mathbf{x},0,t) \Big|.
			\end{aligned}
		\end{equation}
		In order to further estimate \eqref{equation:A+-3}, we first note that due to the definition of the flow maps $\boldsymbol{\mathcal{Z}}^{\pm}$ one has for $(\mathbf{x},t) \in \overline{\Omega}_3 \times [0,1]$ that
		\begin{equation}\label{equation:A+-4a}
			\begin{aligned}
				\begin{cases}
					\xdrv{}{t} \boldsymbol{\mathcal{Z}}^{\pm,1}(\mathbf{x},0,t) - \xdrv{}{t}  \boldsymbol{\mathcal{Z}}^{\pm,2}(\mathbf{x},0,t) & = \boldsymbol{\mathfrak{z}}^{\pm,1}(\boldsymbol{\mathcal{Z}}^{\pm,1}(\mathbf{x},0,t),t)  - \boldsymbol{\mathfrak{z}}^{\pm,2}(\boldsymbol{\mathcal{Z}}^{\pm,2}(\mathbf{x},0,t),t),\\
					\boldsymbol{\mathcal{Z}}^{\pm,1}(\mathbf{x},0,0) -  \boldsymbol{\mathcal{Z}}^{\pm,2}(\mathbf{x},0,0) & = \mathbf{0},
				\end{cases}
			\end{aligned}
		\end{equation}
		and for $l = 1,2$, $i = 1,2$ that
		\begin{equation}\label{equation:A+-4b}
			\begin{aligned}
				\begin{cases}
					\xdrv{}{t} \partial_l \mathcal{Z}^{\pm,1}_i(\mathbf{x},0,t) - \xdrv{}{t} \partial_l \mathcal{Z}^{\pm,2}_i(\mathbf{x},0,t) & =  \boldsymbol{\nabla} \mathfrak{z}^{\pm,1}_i(\boldsymbol{\mathcal{Z}}^{\pm,1}(\mathbf{x},0,t),t) \cdot \partial_l \boldsymbol{\mathcal{Z}}^{\pm,1}(\mathbf{x},0,t) \\
					& \quad - \boldsymbol{\nabla} \mathfrak{z}^{\pm,2}_i(\boldsymbol{\mathcal{Z}}^{\pm,2}(\mathbf{x},0,t),t)  \cdot \partial_l \boldsymbol{\mathcal{Z}}^{\pm,2}(\mathbf{x},0,t),\\
					\partial_l \boldsymbol{\mathcal{Z}}^{\pm,1}(\mathbf{x},0,0) -  \partial_l \boldsymbol{\mathcal{Z}}^{\pm,2}(\mathbf{x},0,0) & = \mathbf{0}.
				\end{cases}
			\end{aligned}
		\end{equation}
		Thus, integrating in \eqref{equation:A+-4a} - \eqref{equation:A+-4b} from $0$ to $t$, and using again the Mean Value Theorem as well as the properties of the respective flows, shall in the sequel allow to appropriately bound the following quantities:
		\begin{itemize}
			\item $W^{\pm}_0(\mathbf{x},t) := \left| \boldsymbol{\mathcal{Z}}^{\pm,1}(\mathbf{x},0,t) - \boldsymbol{\mathcal{Z}}^{\pm,2}(\mathbf{x},0,t) \right|$,\\
			\item $W^{\pm,l}_i(\mathbf{x},t) := \left| \partial_l \mathcal{Z}^{\pm,1}_i(\mathbf{x},0,t) - \partial_l \mathcal{Z}^{\pm,2}_i(\mathbf{x},0,t) \right|$, for $i,l \in \{1,2\}$.
		\end{itemize}
		Starting with $W^{\pm}_0$, and using the approach from \eqref{equation:B+-0} for $\boldsymbol{\mathfrak{z}}^{\pm,1} -\boldsymbol{\mathfrak{z}}^{\pm,2}$, one has the estimate
		\begin{equation*}\label{equation:A+-6}
			\begin{aligned}
				W^{\pm}_0(\mathbf{x},t) & \leq && \int_0^t \Big|\boldsymbol{\mathfrak{z}}^{\pm,1}(\boldsymbol{\mathcal{Z}}^{\pm,1}(\mathbf{x},0,s),s)  - \boldsymbol{\mathfrak{z}}^{\pm,2}(\boldsymbol{\mathcal{Z}}^{\pm,2}(\mathbf{x},0,s),s) \Big| \, ds\\
				& \leq && \int_0^t \Big|\boldsymbol{\mathfrak{z}}^{\pm,1}(\boldsymbol{\mathcal{Z}}^{\pm,1}(\mathbf{x},0,s),s)  - \boldsymbol{\mathfrak{z}}^{\pm,1}(\boldsymbol{\mathcal{Z}}^{\pm,2}(\mathbf{x},0,s),s) + \boldsymbol{\mathfrak{z}}^{\pm,1}(\boldsymbol{\mathcal{Z}}^{\pm,2}(\mathbf{x},0,s),s) -\boldsymbol{\mathfrak{z}}^{\pm,2}(\boldsymbol{\mathcal{Z}}^{\pm,2}(\mathbf{x},0,s),s) \Big| \, ds \\
				& \leq && \int_0^t \Big\{ \|\boldsymbol{\mathfrak{z}}^{\pm,1}\|_{1,\alpha,\Omega_3}(s) W^{\pm}_0(\mathbf{x},s) + \|\boldsymbol{\mathfrak{z}}^{\pm,1} -\boldsymbol{\mathfrak{z}}^{\pm,2}\|_{0,\alpha,\Omega_3}(s) \Big\} \, ds.
			\end{aligned}
		\end{equation*}
		Concerning the terms $W^{\pm,l}_i$, $i,l \in \{1,2\}$, we obtain from \eqref{equation:A+-4b} by integration and the reasoning of \eqref{equation:flowgradest}, that
		\begin{equation}\label{equation:A+-7}
			\begin{aligned}
				W^{\pm,l}_i(\mathbf{x},t) & \leq && \int_0^t \Big| \boldsymbol{\nabla}\mathfrak{z}^{\pm,1}_i (\boldsymbol{\mathcal{Z}}^{\pm,1}(\mathbf{x},0,s),s) \cdot \partial_l \boldsymbol{\mathcal{Z}}^{\pm,1}(\mathbf{x},0,s) - \boldsymbol{\nabla} \mathfrak{z}^{\pm,2}_i(\boldsymbol{\mathcal{Z}}^{\pm,2}(\mathbf{x},0,s),s) \cdot  \partial_l \boldsymbol{\mathcal{Z}}^{\pm,2}(\mathbf{x},0,s) \Big| \, ds\\
				& = && \int_0^t \Big| \boldsymbol{\nabla} \mathfrak{z}^{\pm,1}_i(\boldsymbol{\mathcal{Z}}^{\pm,1}(\mathbf{x},0,s),s) \cdot \left[ \partial_l \boldsymbol{\mathcal{Z}}^{\pm,1}(\mathbf{x},0,s) - \partial_l \boldsymbol{\mathcal{Z}}^{\pm,2}(\mathbf{x},0,s) \right] \\
				& && + \left[\boldsymbol{\nabla} \mathfrak{z}^{\pm,1}_i(\boldsymbol{\mathcal{Z}}^{\pm,1}(\mathbf{x},0,s),s) - \boldsymbol{\nabla} \mathfrak{z}^{\pm,2}_i(\boldsymbol{\mathcal{Z}}^{\pm,2}(\mathbf{x},0,s),s) \right] \cdot \partial_l \boldsymbol{\mathcal{Z}}^{\pm,2}(\mathbf{x},0,s) \Big| \, ds\\
				& \leq && \int_0^t \Bigg\{ \|\boldsymbol{\mathfrak{z}}^{\pm,1}\|_{1,\alpha,\Omega_3}(s) \left|\partial_l \boldsymbol{\mathcal{Z}}^{\pm,1}(\mathbf{x},0,s) - \partial_l \boldsymbol{\mathcal{Z}}^{\pm,2}(\mathbf{x},0,s)\right| \\
				& && + \left| \boldsymbol{\nabla} \mathfrak{z}^{\pm,1}_i(\boldsymbol{\mathcal{Z}}^{\pm,1}(\mathbf{x},0,s),s) - \boldsymbol{\nabla} \mathfrak{z}^{\pm,1}_i(\boldsymbol{\mathcal{Z}}^{\pm,2}(\mathbf{x},0,s),s) \right| \\
				& && + \left| \left[\boldsymbol{\nabla} \mathfrak{z}^{\pm,1}_i(\boldsymbol{\mathcal{Z}}^{\pm,2}(\mathbf{x},0,s),s)  -\boldsymbol{\nabla}\mathfrak{z}^{\pm,2}_i(\boldsymbol{\mathcal{Z}}^{\pm,2}(\mathbf{x},0,s),s) \right] \cdot \partial_l \boldsymbol{\mathcal{Z}}^{\pm,2}(\mathbf{x},0,s) \right| \Bigg\} \, ds\\
				& \leq && C(\nu, \mathbf{y}^*)\int_0^t \Big\{ \left[W_1^{\pm,l}(\mathbf{x}, s) + W_2^{\pm,l}(\mathbf{x}, s)\right] + \|\boldsymbol{\mathfrak{z}}^{\pm,1}\|_{2,\alpha,\Omega}(s) W_0^{\pm}(\mathbf{x},s) + \|\boldsymbol{\mathfrak{z}}^{\pm,1} - \boldsymbol{\mathfrak{z}}^{\pm,2}\|_{1,\alpha,\Omega_3}(s) \Big\} \, ds.\\
			\end{aligned}
		\end{equation}
		Above, we also used the idea of \eqref{equation:B+-0} in order to estimate
		\[
		\begin{cases}
			\|\boldsymbol{\nabla}\mathfrak{z}^{\pm,1}_i (\boldsymbol{\mathcal{Z}}^{\pm,1}(\mathbf{x},0,s),s)\|_{0,\alpha,\Omega_3} & \leq \|\boldsymbol{\mathfrak{z}}^{\pm,1}\|_{1,\alpha,\Omega_3}(s) \leq C(\nu, \mathbf{y}^*),\\
			\|\boldsymbol{\nabla} \mathfrak{z}^{\pm,1}_i(\boldsymbol{\mathcal{Z}}^{\pm,2}(\mathbf{x},0,s),s)  -\boldsymbol{\nabla}\mathfrak{z}^{\pm,2}_i(\boldsymbol{\mathcal{Z}}^{\pm,2}(\mathbf{x},0,s),s)\|_{0,\alpha,\Omega_3} & \leq \|\boldsymbol{\mathfrak{z}}^{\pm,1} - \boldsymbol{\mathfrak{z}}^{\pm,2}\|_{1,\alpha,\Omega_3}(s).
		\end{cases}
		\]
		Accordingly, there exists a constant $K_6 = K_6(\nu, \mathbf{y}^*)$, which does not depend on $\overline{\boldsymbol{\mathfrak{z}}^{+,i}}$ and $\overline{\boldsymbol{\mathfrak{z}}^{-,i}}$ for $i = 1, 2$, such that \eqref{equation:A+-7} implies for all $t \in [0,1]$ and $l \in \{1,2\}$ that
		\begin{equation*}\label{equation:A+-8}
			\begin{aligned}
				\sum\limits_{i = 0}^2 W^{\pm,l}_i(\mathbf{x},t) & \leq && K_6 \int_0^t \Big\{ \sum\limits_{i = 0}^2 W^{\pm,l}_i(\mathbf{x},s) + \|\boldsymbol{\mathfrak{z}}^{\pm,1} - \boldsymbol{\mathfrak{z}}^{\pm,2}\|_{1,\alpha,\Omega_3}(s) \Big\} \, ds,\\
			\end{aligned}
		\end{equation*}
		which allows the application of Gr\"onwall's inequality in order to obtain for $l = 1,2$ that
		\begin{equation}\label{equation:A+-9}
			\begin{aligned}
				\sum\limits_{i = 0}^2 W^{\pm,l}_i(\mathbf{x},t) & \leq && K_7(\nu, \mathbf{y}^*)\max\limits_{t \in [0,1]}\|\boldsymbol{\mathfrak{z}}^{\pm,1} - \boldsymbol{\mathfrak{z}}^{\pm,2}\|_{1, \alpha}(t).
			\end{aligned}
		\end{equation}
		As a result of \eqref{equation:A+-3} and \eqref{equation:A+-9} one gets for $A^{\pm}(t)$ with a constant $K_8 = K_8(K_5,K_7) > 0$, independent of $(\overline{\boldsymbol{\mathfrak{z}}^{+,i}}, \overline{\boldsymbol{\mathfrak{z}}^{-,i}}) \in X_{\nu, k}$, $i = 1,2$, that
		\begin{equation}\label{equation:A+-_final}
			\begin{aligned}
				\omega_k(t)A^{\pm}(t) & \leq &&  K_8\max\limits_{t \in [0,1]}\|\boldsymbol{\mathfrak{z}}^{\mp,1} - \boldsymbol{\mathfrak{z}}^{\mp,2}\|_{1, \alpha}(t).
			\end{aligned}
		\end{equation}
		
		In total, by combining \eqref{equation:A+-_final} and \eqref{equation:B+-1} in \eqref{equation:G+-_1_2_decomp}, one arrives for all $t \in [0,1]$ at
		\begin{equation}\label{equation:contr_estimate4}
			\begin{aligned}
				I_3^{\pm}(t) & = && \tilde{C}(\nu, \mathbf{y}^*) \int_{0}^{1} \left\| G^{\pm,1}(\boldsymbol{\mathcal{Z}}^{\mp,1}(\cdot,0,s), s)-G^{\pm,2}(\boldsymbol{\mathcal{Z}}^{\mp,2}(\cdot,0,s), s) \right\|_{0,\alpha,\Omega_3} \, ds \\
				& = && \tilde{C}(\nu, \mathbf{y}^*) \int_{0}^{1} \frac{\omega_k(s)}{\omega_k(s)} \left\| G^{\pm,1}(\boldsymbol{\mathcal{Z}}^{\mp,1}(\cdot,0,s), s)-G^{\pm,2}(\boldsymbol{\mathcal{Z}}^{\mp,2}(\cdot,0,s), s) \right\|_{0,\alpha,\Omega_3} \, ds \\
				& \leq && K_9 \left[\int_0^{1} \frac{1}{\omega_k(s)} \, ds \right] \max\limits_{t \in [0,1]} \left\{ \|\boldsymbol{\mathfrak{z}}^{+,1} -  \boldsymbol{\mathfrak{z}}^{+,2}\|_{1,\alpha, \Omega}(t) + \|\boldsymbol{\mathfrak{z}}^{-,1} -  \boldsymbol{\mathfrak{z}}^{-,2}\|_{1,\alpha, \Omega}(t)\right\} \\
				& \leq && \frac{\kappa}{6} \max\limits_{t \in [0,1]} \left\{ \|\boldsymbol{\mathfrak{z}}^{+,1} -  \boldsymbol{\mathfrak{z}}^{+,2}\|_{1,\alpha, \Omega}(t) + \|\boldsymbol{\mathfrak{z}}^{-,1} -  \boldsymbol{\mathfrak{z}}^{-,2}\|_{1,\alpha, \Omega}(t)\right\} ,
			\end{aligned}
		\end{equation}
		with $K_9 = K_9(\nu, \mathbf{y}^*)$ not depending on $(\overline{\boldsymbol{\mathfrak{z}}^+}, \overline{\boldsymbol{\mathfrak{z}}^-}) \in X_{\nu, k}$, which allows to increase $k$ and reduce $\|\mathbf{z}^{\pm}_0\|_{2,\alpha,\Omega}$ again, if needed, in order to have the last inequality in \eqref{equation:contr_estimate4}.
	\end{proof}	
	
	%Acknowledgments
	\begin{acknowledgement}
		{\bf Acknowledgments}: This research was partially supported by National Natural Science Foundation of China
		under Grant No. 11631008, National Key R\&D Program of China under Grant No. 2020YFA0712000, Strategic Priority Research Program of Chinese Academy of Sciences under Grant No. XDA25010402, and Shanghai Municipal Education Commission under Grant No. 2021-01-07-00-02-E00087.
		
	\end{acknowledgement}
	
	%%-----------------------------
	\bibliographystyle{plain}
	\bibliography{mhd2darx_v2}
	%%-----------------------------
\end{document}